\newcommand{\bi}{\textnormal{\textbf{i}}}  \newcommand{\cB}{\mathcal{B}} \newcommand{\cC}{\mathcal{C}} \newcommand{\FF}{\mathbb{F}} \newcommand{\ZZ}{\mathbb{Z}}
 \newcommand{\NN}{\mathbb{N}} 
\newcommand{\cT}{\mathcal{T}} \newcommand{\cO}{\mathcal{O}} \newcommand{\cV}{\mathcal{V}} \newcommand{\cW}{\mathcal{W}} 
\newcommand{\med}{\;|\;}  
\DeclareMathOperator{\supp}{\mathrm{Supp}\,} \DeclareMathOperator{\Aut}{\mathrm{Aut}} \DeclareMathOperator{\Diag}{\mathrm{Diag}} \DeclareMathOperator{\Stab}{\mathrm{Stab}}   
\DeclareMathOperator{\End}{\mathrm{End}}  \DeclareMathOperator{\Sym}{\mathrm{Sym}}  
\DeclareMathOperator{\AAut}{\mathbf{Aut}} \DeclareMathOperator{\OO}{\mathbf{O}} 
\DeclareMathOperator{\chr}{\mathrm{char}\,}  \DeclareMathOperator{\lspan}{\mathrm{span}}  \newcommand{\CD}{\mathfrak{CD}}
\newcommand{\Ort}{\mathrm{O}} \newcommand{\Spin}{\mathrm{Spin}} \newcommand{\GL}{\mathrm{GL}} 
 \newcommand{\Univ}{\mathcal{U}} 
\DeclareMathOperator{\Hom}{\mathrm{Hom}\,} \DeclareMathOperator{\id}{\mathrm{id}}
 \DeclareMathOperator{\im}{\textnormal{im}}
\newcommand{\cA}{\mathcal{A}} \newcommand{\cH}{\mathcal{H}} \newcommand{\cS}{\mathcal{S}}
\newcommand{\inv}{{}^-} \newcommand{\kan}{\mathfrak{K}}
\DeclareMathOperator{\ii}{\textbf{i}} \DeclareMathOperator{\charTrivial}{\mathds{1}}
\newtheorem{theorem}{Theorem}
\newtheorem{proposition}[theorem]{Proposition}
\newtheorem{lemma}[theorem]{Lemma}
\newtheorem{corollary}[theorem]{Corollary}
\theoremstyle{definition}
\newtheorem{df}[theorem]{Definition}
\newtheorem{example}[theorem]{Example}
\newtheorem{notation}[theorem]{Notation}
\theoremstyle{remark}
\newtheorem{remark}[theorem]{Remark}
\numberwithin{equation}{section} 
\numberwithin{theorem}{section} 
\begin{document}
\title[Fine gradings on Kantor systems of Hurwitz type]{Fine gradings on Kantor systems \\ of Hurwitz type}

\author[D. Aranda-Orna]{Diego Aranda-Orna}
\address{Departamento de Matem\'aticas y Computaci\'on,
Universidad de La Rioja, 26006, Logro\~no, Spain}
\email{diego.aranda.orna@gmail.com}

\author[A.S. C\'ordova-Mart\'inez]{Alejandra S. C\'ordova-Mart\'inez}
\address{Departamento de Matem\'{a}ticas
 e Instituto Universitario de Matem\'aticas y Aplicaciones,
 Universidad de Zaragoza, 50009 Zaragoza, Spain}
\email{sarina.cordova@gmail.com}

\thanks{Both authors are supported by grant MTM2017-83506-C2-1-P (AEI/FEDER, UE). The first author was affiliated with the Instituto Tecnol\'ogico de Castilla y Le\'on (Burgos, Spain) during most part of this work. The second author acknowledges support by grant S60$\_$20R (Gobierno de Arag\'on, Grupo de investigaci\'on “Investigaci\'on en Educaci\'on Matem\'atica”).}
\date{}

\begin{abstract}
We give a classification up to equivalence of the fine group gradings by abelian groups on the Kantor pairs and triple systems associated with Hurwitz algebras (i.e., unital composition algebras), under the assumption that the base field is algebraically closed of characteristic different from $2$. The universal groups and associated Weyl groups are computed. We also determine, in the case of Kantor pairs, the induced (fine) gradings on the associated Lie algebras given by the Kantor construction.
\end{abstract}

\maketitle


\section{Introduction}

There is a close relation between Lie algebras and Kantor pairs which extends also to gradings. 
From a Kantor pair $\cV=(\cV^-,\cV^+)$ we can construct a 5-graded Lie algebra 
$$\kan(\cV) = \kan(\cV)^{-2} \oplus \kan(\cV)^{-1} \oplus \kan(\cV)^{0} \oplus \kan(\cV)^{1} \oplus \kan(\cV)^{2},$$ 
called \textit{the Kantor Lie algebra of $\cV$}, by means of the Kantor construction (see \cite[\S3--4]{AF99}). Subspaces $\kan(\cV)^{1}$ and $\kan(\cV)^{-1}$ are identified with $\cV^+$ and $\cV^-$, respectively.

Conversely, from a 5-graded Lie algebra 
$$L = L_{-2} \oplus L_{-1} \oplus L_{0} \oplus L_{1} \oplus L_{2},$$
we obtain a Kantor pair given by $\cV=(L_{-1}, L_{1})$ (see \cite[\S4.2]{AFS17} and references therein), where the triple products are defined using the Lie bracket. In these constructions, Jordan pairs correspond to $3$-graded Lie algebras.

The Kantor construction can be used to extend automorphisms and gradings from a Kantor pair $\cV$ to the associated Lie algebra $\kan(\cV)$. The main goal of this work is to classify fine gradings, up to equivalence, on the (simple) Kantor pairs and triple systems associated with Hurwitz algebras. The fine gradings on Kantor pairs of Hurwitz type correspond to the fine gradings on the associated simple Lie algebras which are compatible with the main $\ZZ$-grading associated with the Kantor construction. In order to reach our goal, automorphisms and orbits are studied for these Kantor systems.
 
\bigskip

Throughout this paper, we will always assume that the base field $\FF$ is algebraically closed of characteristic different from $2$, unless otherwise stated.

\smallskip

This paper is structured as follows:

In Section~\ref{section.defs}, we recall some basic definitions and results related to gradings, structurable algebras, Kantor systems (i.e., Kantor pairs and Kantor triple systems), Peirce decompositions, and Hurwitz algebras; a few original results will be proven here too.

In Section~\ref{section.generalities}, some general results related to automorphisms and gradings on Kantor systems are proven.

Section~\ref{section.automorphisms.and.orbits} is aimed to study the automorphisms and orbits of Kantor systems of Hurwitz type. Unexpectedly, an exceptional case occurs for $2$-dimensional Kantor pairs of Hurwitz type if $\chr\FF = 3$. This case, in Section~\ref{section.induced.gradings}, is shown to be related to the Lie algebra $\mathfrak{a}_2$, which is also exceptional if $\chr\FF = 3$.

A classification of the fine gradings up to equivalence, for Kantor systems of Hurwitz type, is given in Section~\ref{section.classification.gradings}. The Weyl groups of these fine gradings are computed in Section~\ref{section.weyl}, and the induced gradings on Lie algebras via the Kantor construction are described in Section~\ref{section.induced.gradings}.

\section{Definitions and preliminaries} \label{section.defs}
\subsection{Gradings on algebras}

Now we will recall the basic definitions of gradings on algebras.

Let $\cA$ be an $\FF$-algebra (not necessarily associative) and $G$ a group. A {\em $G$-grading} on $\cA$ is a vector space decomposition
$$ \Gamma:\;\cA=\bigoplus_{g\in G} \cA_g $$
such that $\cA_g \cA_h\subseteq \cA_{gh}$ for all $g,h\in G$. Given a $G$-grading on $\cA$, we will also say that $\cA$ is a {\em $G$-graded algebra}. The nonzero elements $x\in\cA_g$ are said to be {\em homogeneous of degree $g$}, and we have a {\em degree map} $\deg_\Gamma (x)=g$ (also denoted by $\deg$ if there is no ambiguity with other gradings). The subspace $\cA_g$ is called {\em homogeneous component of degree $g$}. The set $\supp \Gamma := \{g\in G \med \cA_g\neq 0\}$ is called the {\em support} of the grading. 

Consider a group homomorphism $\alpha: G\rightarrow H$. Let $\Gamma: \cA = \bigoplus_{g\in G} \cA_g$ be a $G$-grading on  $\cA$. Then ${}^{\alpha}\Gamma: \cA = \bigoplus_{h\in H} \cA'_h$, where $\cA'_h=\bigoplus_{g\in \alpha^{-1}(h)} \cA_g$, is an $H$-grading on $\cA$ called the \textit{induced grading} from $\Gamma$ by the homomorphism $\alpha$.

Recall that an {\em involution} is an $\FF$-linear antiautomorphism of order $2$. For the case of an algebra with involution $(\cA, \inv)$, we will also require the homogeneous components to be invariant by the involution, that is, $\overline{\cA_g} = \cA_g$ for each $g\in G$.

\smallskip

Let $\Gamma: \cA = \bigoplus_{g\in G} \cA_g$ and $\Gamma':\cA = \bigoplus_{h\in H} \cA'_h$ be two gradings on $\cA$. Then we will say that $\Gamma$ is a \textit{refinement} of $\Gamma'$, or that $\Gamma'$ is a \textit{coarsening} of $\Gamma$, if for any $g \in G$ there exists $h\in H$ such that $\mathcal{A}_g \subseteq \mathcal{A}'_h$; if the inclusion is strict for some $g \in G$, then we will say that we have a \textit{proper} refinement or coarsening. A grading is \textit{fine} if it has no proper refinements.

If $\Gamma$ is a grading on a finite-dimensional algebra $\cA$, a sequence $(n_1, n_2, \dots, n_k)$ of non-negative integers is called the \emph{type} of $\Gamma$ if $n_i$ is the number of homogeneous components of dimension $i$, with $n_k \neq 0$ and $\sum_i n_i = \dim(\cA$).

If $\Gamma: \cA = \bigoplus_{g\in G} \cA_g$ and $\Gamma':\cA = \bigoplus_{h\in H} \cA'_h$ are gradings on $\cA$, we will say that $\Gamma$ and $\Gamma'$ are {\em compatible} if $\cA = \bigoplus_{g\in G, h\in H} \cA_{(g,h)}$ where $\cA_{(g,h)} := \cA_g \cap \cA'_h$. Note that compatibility implies that the subspaces $\cA_{(g,h)}$ define a $G\times H$-grading on $\cA$. A subspace $V$ of $\cA$ is said to be {\em graded} for $\Gamma$ if $V = \bigoplus_{g\in G} (V \cap \cA_g)$.

\smallskip

We can index the homogeneous subspaces of a grading by different groups in such a way that they are still gradings, that is, gradings can be realized by different groups. Therefore, to avoid repetitions in classifications, it is convenient to consider gradings by their universal group. Recall that a group $G_0$ is said to be a {\em universal group} of a grading $\Gamma$ on $\cA$ if it satisfies the following universal property: for any other realization of $\Gamma$ as a $G$-grading, there exists a unique homomorphism $G_0 \to G$ that restricts to the identity on $\supp \Gamma$ (with the natural identification of both supports). It is well-known that universal groups always exist and are unique up to isomorphism \cite[\S~1.2]{EKmon}. We will denote the universal group of $\Gamma$ by $\Univ(\Gamma)$. Note that $\Univ(\Gamma)$ is the group generated by $\supp \Gamma$ with the defining relations $g_1 g_2 = g_3$ for all $g_1, g_2, g_3\in \supp \Gamma$ such that $0\ne\cA_{g_1}\cA_{g_2}\subseteq\cA_{g_3}$.

If $\Gamma$ can be realized as an abelian group grading, then its {\em universal abelian group} can be defined similarly, by adding the restriction of being abelian. In this paper we will only consider gradings by abelian groups, so that additive notation will be used for their products, and by universal group we will mean universal abelian group.

\smallskip

Let $\Gamma$ be a $G$-grading on an algebra $\cA$ and $\Gamma'$ an $H$-grading on an algebra $\cB$. Then $\Gamma$ and $\Gamma'$ are said to be \textit{equivalent} if there exist an isomorphism of algebras $\varphi: \cA \to \cB$ and a bijection $\alpha: \supp\Gamma \to \supp\Gamma'$ such that $\varphi (\cA_g)= \cB_{\alpha(g)}$ for all $g \in \supp\Gamma$.

\smallskip

A degree map related to the universal property that defines $\Univ(\Gamma)$ will be called a \textit{universal degree map} of the $\Univ(\Gamma)$-grading $\Gamma$. Note that $\Gamma$ may be realizable as a $\Univ(\Gamma)$-grading with different degree maps, and some may not satisfy the universal property that defines $\Univ(\Gamma)$. Since $\Univ(\Gamma)$ is unique up to isomorphism, it follows that the universal degree is unique up to equivalence of gradings; in other words, $\deg$ and $\deg'$ are universal degrees of $\Gamma$ as a $\Univ(\Gamma)$-grading if and only if there is $\varphi\in\Aut(\Univ(\Gamma))$ such that $\deg' = \varphi \circ \deg$. When we say that a $G$-grading $\Gamma$, with degree map $\deg$, is {\em given by its universal group}, we usually mean that $G \cong \Univ(\Gamma)$ and $\deg$ is equivalent to the universal degree of $\Gamma$.

\smallskip

If $V$ is a vector space and $G$ an abelian group, a decomposition $\Gamma\colon\; V = \bigoplus_{g\in G} V_g$ will be called a {\em $G$-grading} on $V$. Let $\Gamma'\colon\;W = \bigoplus_{g\in G} W_{g}$ be a $G$-grading on a vector space $W$ and $g\in G$, then a linear map $f\colon V \to W$ is said to be {\em homogeneous of degree $g$} if $f(V_h) \subseteq W_{g+h}$ for each $h\in G$. In that case, $\ker f$ and $\im f$ are graded subspaces for $\Gamma$ and $\Gamma'$, respectively.

\smallskip

Given a $G$-grading $\Gamma$ on $\cA$, the \emph{automorphism group of $\Gamma$}, $\Aut(\Gamma)$, is the group of self-equivalences of $\Gamma$. The \emph{stabilizer of $\Gamma$}, $\Stab(\Gamma)$, is the group of $G$-automorphisms of $\Gamma$, i.e., the group of automorphisms of $\cA$ that fix the homogeneous components. The \emph{diagonal group of $\Gamma$}, $\Diag(\Gamma)$, is the subgroup of $\Stab(\Gamma)$ consisting of the automorphisms that act by multiplication by a nonzero scalar on each homogeneous component. The \emph{Weyl group of $\Gamma$} is the quotient group $\cW(\Gamma)=\Aut(\Gamma)/\Stab(\Gamma)$, which can be regarded as a subgroup of $\Sym(\supp\Gamma)$ and of $\Aut(\Univ(\Gamma))$.

\smallskip

In this paper, fine gradings will be classified up to equivalence and by their universal abelian groups. Recall that for finite-dimensional algebras, the non-fine gradings can be obtained by computing the coarsenings of the fine gradings, and these are induced by quotients of the universal groups.

The definitions of affine group schemes and their relation with gradings can be consulted in \cite[Appendix A]{EKmon}.

\subsection{Kantor pairs and Kantor triple systems}

Let $(\cA,\inv)$ be an $\FF$-algebra with involution. Then $\cA=\cH(\cA,\inv)\oplus\cS(\cA,\inv)$, where:
$$
\cH(\cA,\inv)=\{a\in\cA \med \bar{a}=a\}\;\; \mbox{and} \;\; \cS(\cA,\inv)=\{a\in\cA \med \bar{a}=-a\}.
$$
The subspaces $\cH(\cA,\inv)$ and $\cS(\cA,\inv)$ are called, respectively, the \textit{hermitian} (or \textit{symmetric}) subspace and the \textit{skew-symmetric} subspace.

\begin{df}
A unital $\FF$-algebra with involution $(\cA,\inv)$ is said to be {\it structurable} if
\begin{equation}\label{struc}
[V_{x,y},V_{z,w}] = V_{V_{x,y}z,w}-V_{z,V_{y,x}w}\quad\mbox{for all}\;x,y,z,w\in\cA,
\end{equation}
where $ V_{x,y}(z) = \{x,y,z\}:= (x\bar y)z + (z\bar y)x - (z\bar x)y $.
The $U$-operator is defined by $U_{x,z}(y) := \{x,y,z\}$ and $U_x := U_{x,x}$.
\end{df}
Note that, in the literature, fields of characteristic $3$ are usually excluded in the definition of structurable algebra; this is due to problems arising in characteristic $3$, including some results of general theory that may not hold in that case. However, here we will include that case.

Recall that the associative center of $\cA$, denoted by $Z(\cA)$, is the set of elements $z\in\cA$ satisfying the equalities $xz=zx$ and $(z,x,y)=(x,z,y)=(x,y,z)=0$ for all $x,y\in\cA$, where $(x,y,z) := (xy)z - x(yz)$ denotes the associator. The {\it center} of $(\cA,\inv)$ is defined by $Z(\cA,\inv) = Z(\cA)\cap\cH(\cA,\inv)$, and a structurable algebra $\cA$ is said to be {\it central} if $Z(\cA,\inv)=\FF1$.

Now we recall the well-known classification of central simple structurable algebras:

\begin{theorem}[Allison, Smirnov] If $\chr\FF \neq 2, 3, 5$, then any finite-dimensional central simple structurable $\FF$-algebra belongs to one of the following six (non-disjoint) classes:
\begin{itemize}
\item[(1)] central simple associative algebras with involution,
\item[(2)] central simple Jordan algebras (with identity involution),
\item[(3)] structurable algebras constructed from a non-degenerate Hermitian form
over a central simple associative algebra with involution,
\item[(4)] forms of the tensor product of two Hurwitz algebras,
\item[(5)] simple structurable algebras of skew-dimension 1 (forms of structurable matrix algebras),
\item[(6)] an exceptional 35-dimensional case (Smirnov algebra), which can be constructed from an
octonion algebra.\qed
\end{itemize}
\end{theorem}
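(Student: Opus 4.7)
The plan is to reduce the classification problem to the (known) classification of $5$-graded simple Lie algebras via the Kantor construction, which is exactly the strategy followed by Allison (in characteristic zero) and extended by Smirnov to characteristic different from $2, 3, 5$. First I would attach to each central simple structurable algebra $(\cA,\inv)$ its Kantor Lie algebra $\kan(\cA)$, defined through an appropriate $5$-graded construction built from $\cA$ together with the skew subspace $\cS(\cA,\inv)$ and the triple product $\{x,y,z\}=V_{x,y}(z)$. The first technical step is to show that central simplicity transfers both ways: $(\cA,\inv)$ is central simple if and only if $\kan(\cA)$ is a central simple $\ZZ$-graded Lie algebra (with the $5$-grading in question), and conversely any central simple Lie algebra carrying a faithful $5$-grading with nontrivial $\pm 2$ parts can be recovered, up to isotopy, from a central simple structurable algebra.

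Next I would invoke the classification of $\ZZ$-gradings on central simple finite-dimensional Lie algebras, which under the characteristic assumption $\chr\FF\ne 2,3,5$ is available via weighted Dynkin diagrams (equivalently, via conjugacy classes of parabolic subalgebras). The $5$-gradings that can arise from structurable algebras are precisely those determined by a grading element whose eigenvalues on a Cartan subalgebra lie in $\{-2,-1,0,1,2\}$ with the extremal parts nonzero; enumerating these across the classical and exceptional types produces a finite list of candidate Lie algebras and corresponding $L_{\pm 1}, L_{\pm 2}$.

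Then, for each item on that list, the task is to reconstruct an explicit structurable algebra realizing that Lie algebra through $\kan(-)$. Here the six families appear naturally: the $3$-graded cases (where $L_{\pm 2}=0$) recover Jordan pairs/algebras and associative algebras with involution; a one-dimensional $L_2$ yields the structurable matrix algebras of skew-dimension $1$; Hermitian forms over associative algebras with involution yield another family; the tensor product $\cC_1\otimes\cC_2$ of two Hurwitz algebras (with the tensor of standard involutions) accounts for the exceptional $5$-graded series on $F_4, E_6, E_7, E_8$ dictated by Freudenthal's magic square; and the remaining $35$-dimensional piece inside $E_7$ is realized by the Smirnov algebra built from an octonion algebra. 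For each family one verifies by direct computation that the constructed algebra is structurable, central, and simple, and that its Kantor algebra reproduces the target Lie algebra.

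The main obstacle will be the matching step: showing that the list of $5$-graded simple Lie algebras is exhausted by these six classes and that no central simple structurable algebra is missed. Concretely, this requires (a) ruling out degenerate or non-exhaustive constructions (which is where characteristics $3$ and $5$ cause trouble, hence the hypothesis), and (b) identifying when two a priori different constructions yield isomorphic structurable algebras, so that the classes are correctly labeled even though they are not disjoint. Once those identifications are handled—in practice via explicit isomorphisms between small-dimensional members of different families—the classification follows.
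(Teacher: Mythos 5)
First, note that the paper does not prove this theorem at all: it is quoted as a known result, with the end-of-proof symbol placed directly in the statement, and the attribution is given immediately afterwards to Allison \cite{Al78} (characteristic $0$, with case (6) overlooked) and to Smirnov \cite{Smi92} (completion of the list and extension to $\chr\FF\neq 2,3,5$). So there is no in-paper argument to compare yours against; the only meaningful comparison is with the cited literature.

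As a proof, your proposal is an outline of a research programme rather than an argument. Every one of its stages --- the two-way transfer of central simplicity between $(\cA,\inv)$ and $\kan(\cA)$; the recovery of a \emph{unital} structurable algebra from a $5$-graded Lie algebra, which requires the existence of an invertible element in $L_1$ and not merely the nontriviality of $L_{\pm 2}$; the classification of $5$-gradings over a field that is not assumed algebraically closed (items (4) and (5) explicitly concern \emph{forms}, so a Galois descent argument is unavoidable and weighted Dynkin diagrams alone do not suffice); and the exhaustiveness of the six families --- is itself a substantial theorem occupying a large part of \cite{Al78} and \cite{Smi92}. There is also a concrete error in the matching step: you place ``associative algebras with involution'' among the $3$-graded cases with $L_{\pm 2}=0$, but for a central simple associative algebra with a non-identity involution the skew subspace $\cS(\cA,\inv)$ is nonzero, so $\kan(\cA)^{\pm 2}$ (identified with $L_\cS$ as in \eqref{kantor.construction}) is nonzero and the grading is genuinely a $5$-grading; only the Jordan case (identity involution) is $3$-graded. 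In short, the general direction of your sketch is consistent with the Lie-theoretic route taken in Smirnov's completion, but none of the steps is carried out, and within this paper the statement is correctly treated as an external citation rather than something to be reproved.
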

This classification was given by Allison in the case of characteristic $0$ (\cite{Al78}), where case (6) was overlooked. Smirnov completed the classification and gave the generalization for the case with $\chr\FF\neq 2,3,5$ (\cite{Smi92}).

\begin{df}
A {\em Kantor pair} (or {\em generalized Jordan pair of second order} \cite{F94, AF99}) is a pair of vector spaces $\cV = (\cV^+, \cV^-)$ and a pair of trilinear products $\cV^\sigma\times\cV^{-\sigma}\times\cV^\sigma\to\cV^\sigma$, denoted by $\{x,y,z\}^\sigma$, satisfying the identities:
\begin{align}
& [V^\sigma_{x,y}, V^\sigma_{z,w}] = V^\sigma_{V^\sigma_{x,y}z,w} - V^\sigma_{z,V^{-\sigma}_{y,x}w}, \\
& K^\sigma_{K^\sigma_{x,y}z,w} = K^\sigma_{x,y}V^{-\sigma}_{z,w} + V^\sigma_{w,z}K^\sigma_{x,y},
\end{align}
where $V^\sigma_{x,y}z = U^\sigma_{x,z}(y):= \{x,y,z\}^\sigma$, $U^\sigma_x := U^\sigma_{x,x}$ and $K^\sigma_{x,y}z = K^\sigma(x,y)z := \{x,z,y\}^\sigma - \{y,z,x\}^\sigma$. The map $V^\sigma_{x,y}$ is sometimes denoted by $D^\sigma_{x,y}$ or $D^\sigma(x,y)$, because $(V^+_{x,y},-V^-_{y,x})$ is a derivation of the Kantor pair.
The superscript $\sigma$ will always take the values $+$ and $-$, and may be omitted when there is no ambiguity.
\end{df}

\begin{df}
A {\em Kantor triple system} (or {\em generalized Jordan triple system of second order} \cite{K72, K73}) is a vector space $\cT$ with a trilinear product $\cT\times\cT\times\cT\to\cT$, denoted by $\{x,y,z\}$, which satisfies:
\begin{align} \label{}
& [V_{x,y}, V_{z,w}] = V_{V_{x,y}z,w} - V_{z,V_{y,x}w}, \\
& K_{K_{x,y}z,w} = K_{x,y}V_{z,w} + V_{w,z}K_{x,y},
\end{align}
where $V_{x,y}z = U_{x,z}(y) := \{x,y,z\}$, $U_x := U_{x,x}$ and $K_{x,y}z := \{x,z,y\} - \{y,z,x\}$.
\end{df}

Given a structurable algebra $\cA$, we can define its associated Kantor triple system as the vector space $\cA$ endowed with the triple product $\{x,y,z\}$ of $\cA$. Similarly, with two copies of a Kantor triple system $\cT$ and two copies of its triple product we can define the associated Kantor pair $\cV = (\cT, \cT)$. In particular, a structurable algebra $\cA$ with its triple product defines a Kantor pair $(\cA, \cA)$. Note that Jordan pairs (respectively, Jordan triple systems) are particular cases of Kantor pairs (respectively, Kantor triple systems); they are exactly those where $K_{x,y}=0$ for all $x,y$.

\bigskip

We will now recall the Kantor construction from a Kantor pair, which produces a $5$-graded Lie algebra. The Kantor construction is a generalization of the Tits-Kantor-Koecher (TKK) construction from Jordan pairs. Consider the vector space
\begin{equation}
\kan(\cV) := \kan(\cV)^{-2} \oplus \kan(\cV)^{-1} \oplus \kan(\cV)^{0} \oplus \kan(\cV)^{1} \oplus \kan(\cV)^{2}, 
\end{equation}
where
\begin{align*}
& \kan(\cV)^{-2} = \left(\begin{matrix} 0 & K(\cV^-,\cV^-) \\ 0 & 0 \end{matrix}\right), \quad
\kan(\cV)^{-1} = \left(\begin{matrix} \cV^- \\ 0 \end{matrix}\right), \\
& \kan(\cV)^{0} = \lspan\left\{\left(\begin{matrix} D(x^-,x^+) & 0 \\ 0 & -D(x^+,x^-) \end{matrix}\right) 
  \med x^\sigma\in\cV^\sigma \right\}, \\
& \kan(\cV)^{1} = \left(\begin{matrix} 0 \\ \cV^+ \end{matrix}\right), \quad
\kan(\cV)^{2} = \left(\begin{matrix} 0 & 0 \\ K(\cV^+,\cV^+) & 0 \end{matrix}\right).
\end{align*}
Then, the vector space
\begin{align*}
\mathfrak{S}(\cV) :=& \kan(\cV)^{-2} \oplus \kan(\cV)^0 \oplus \kan(\cV)^{2} \\
=& \lspan\left\{\left(\begin{matrix} D(x^-,x^+) & K(y^-,z^-) \\ K(y^+,z^+) & -D(x^+,x^-) \end{matrix}\right) 
  \med x^\sigma,y^\sigma,z^\sigma\in\cV^\sigma \right\}
\end{align*}
is a subalgebra of the Lie algebra 
$$\End\begin{pmatrix} \cV^- \\ \cV^+ \end{pmatrix}
= \begin{pmatrix} \End(\cV^-) & \Hom(\cV^+,\cV^-) \\ \Hom(\cV^-,\cV^+) & \End(\cV^+) \end{pmatrix} ,$$
with the commutator product. Now define an anti-commutative product on $\kan(\cV)$ by means of
\begin{align*}
& [A,B] = AB-BA, \qquad 
[A, \begin{pmatrix} x^- \\ x^+ \end{pmatrix} ] = A \begin{pmatrix} x^- \\ x^+ \end{pmatrix}, \\
& [\begin{pmatrix} x^- \\ x^+ \end{pmatrix}, \begin{pmatrix} y^- \\ y^+ \end{pmatrix}] =
\left(\begin{matrix} D(x^-,y^+)-D(y^-,x^+) & K(x^-,y^-) \\ K(x^+,y^+) & -D(y^+,x^-)+D(x^+,y^-) \end{matrix}\right)
\end{align*}
where $x^{\sigma},y^{\sigma} \in \cV^{\sigma}$ and $A,B\in \mathfrak{S}(\cV)$.
Then $\kan(\cV)$ becomes a Lie algebra, called the \emph{Kantor Lie algebra} of $\cV$. The $5$-grading (i.e., the grading with support $\{0,\pm 1, \pm 2\}$) is a $\ZZ$-grading which is called the \emph{standard grading} on $\kan(\cV)$, but we will also refer to it as the \emph{main grading} on $\kan(\cV)$. The subspaces $\kan(\cV)^1$ and $\kan(\cV)^{-1}$ are usually identified with $\cV^+$ and $\cV^-$, respectively. The Kantor construction of a structurable algebra or Kantor triple system is defined as the Kantor construction of the associated Kantor pair.

Conversely, it is well-known that a $5$-graded Lie algebra $L = \bigoplus_{i\in\ZZ} L_i$ produces a Kantor pair $\cV = (L_{-1}, L_1)$ with triple products defined by
$$\{x^\sigma, y^{-\sigma}, z^\sigma\} := [[x^\sigma, y^{-\sigma}], z^\sigma].$$

\smallskip

Let $\cA$ be a structurable algebra and $\cV = (\cV^+, \cV^-) = (\cA, \cA)$ the associated Kantor pair. Recall that $\nu(x^-,x^+):=(D_{x^-,x^+},-D_{x^+,x^-})$ is a derivation called \emph{inner derivation} associated with $(x^-,x^+)\in\cV^-\times\cV^+$. The \emph{inner structure algebra} of $\cA$ is the Lie algebra $\mathfrak{innstr}(\cA) :=\lspan\{\nu(x,y)\med x,y\in\cA\}$. Let $L_x$ denote the left multiplication by $x\in\cA$ and write $\cS=\cS(\cA)$. Then, the map $\cS\to L_\cS$, $s\mapsto L_s$, is a linear monomorphism, thus we can identify $\cS$ with $L_\cS$. Also, note that the map $\cA\times\cA\to\cS$ given by $\psi(x,y):=x\bar y-y\bar x$ is an epimorphism (because $\psi(s,1)=2s$ for $s\in\cS$). By \cite[(1.3)]{AF84}, we have the identity $L_{\psi(x,y)}=U_{x,y}-U_{y,x}=K(x,y)$ for all $x,y\in\cA$. Consequently, we can identify the subspaces $\kan(\cV)^{2}$ and $\kan(\cV)^{-2}$ with $L_\cS$, and also with $\cS$. This allows to write the main grading on $\kan(\cA)$ in a well-known second form, as follows:
\begin{equation}\label{kantor.construction}
\kan(\cA) = \cS^- \oplus \cA^- \oplus \mathfrak{innstr}(\cA) \oplus \cA^+ \oplus \cS^+.
\end{equation}

This construction can be used to induce gradings on the Kantor Lie algebra from gradings on a structurable algebra, Kantor pair, or Kantor triple system. That is one of the aims of this work, where the particular case related to Hurwitz algebras is studied.

Finally, note that for a Kantor pair $\cV$, each automorphism $\varphi\in\Aut(\cV)$ has a natural extension $\widetilde{\varphi} \in \Aut(\kan(\cV))$ whose action on $\mathfrak{S}(\cV)$ is given by
\begin{equation}
A = \begin{pmatrix} A_{11} & A_{12} \\ A_{21} & A_{22} \end{pmatrix} \mapsto
\begin{pmatrix} \varphi^- A_{11} (\varphi^-)^{-1} & \varphi^- A_{12} (\varphi^+)^{-1} \\
                \varphi^+ A_{21} (\varphi^-)^{-1} & \varphi^+ A_{22} (\varphi^+)^{-1} \end{pmatrix} 
\end{equation}
for each $A\in \mathfrak{S}(\cV)$, and we have that $\AAut(\cV) \leq \AAut(\kan(\cV))$.

\subsection{Gradings on Kantor pairs and Kantor triple systems}

\begin{df}
Let $G$ be an abelian group and $\cV$ a Kantor pair. Given two decompositions of vector spaces $\Gamma^{\sigma} \colon \cV^\sigma = \bigoplus_{g\in G} \cV_g^{\sigma}$, for $\sigma = \pm$, we will say that $\Gamma=(\Gamma^+,\Gamma^-)$ is a {\em $G$-grading on $\cV$} if $\lbrace \cV^{\sigma}_g, \cV^{-\sigma}_h, \cV^{\sigma}_k \rbrace \subseteq \cV^{\sigma}_{g+h+k}$ for any $g,h,k\in G$ and $\sigma\in \lbrace +,- \rbrace$. The vector space $\cV^+_g \oplus \cV^-_g$ is the {\em homogeneous component of degree} $g$. If $0\neq x\in \cV^{\sigma}_g$ we say $x$ is {\em homogeneous of degree $g$} and we write $\deg(x)=g$. 

Let $G$ be an abelian group and $\cT$ a Kantor triple system. A {\em $G$-grading on $\cT$} is a decomposition of vector spaces $\Gamma \colon \cT = \bigoplus_{g\in G} \cT_g$ such that $\lbrace \cT_g, \cT_h, \cT_k \rbrace \subseteq \cT_{g+h+k}$ for any $g,h,k\in G$. Definitions of homogeneous components, homogeneous elements and degree are analogous to the case of Kantor pairs.

The rest of definitions related to gradings (universal group, equivalence and isomorphism of gradings, etc.) are analogous to the algebra case (see \cite[Chapter~1]{EKmon}).
\end{df}

Let $\Gamma$ be a $G$-grading on a Kantor pair $\cV$ with degree $\deg$. Fix $g\in G$. For any homogeneous elements $x^+\in \cV^+$ and $y^-\in \cV^-$, set $\deg_g(x^+):=\deg(x^+)+g$, $\deg_g(y^-):=\deg(y^-)-g$. This defines a new $G$-grading, which will be denoted by $\Gamma^{[g]}$ and called the {\em $g$-shift of $\Gamma$}. Note that, although $\Gamma$ and $\Gamma^{[g]}$ may fail to be equivalent (because the shift may collapse or split a homogeneous subspace of $\cV^+$ with another of $\cV^-$), the intersection of their homogeneous components with $\cV^\sigma$ coincide for each $\sigma$. It is clear that $(\Gamma^{[g]})^{[h]}=\Gamma^{[g+h]}$. Similarly, if $\Gamma$ is a $G$-grading on a Kantor triple system $\cT$ and $g\in G$ has order $1$ or $2$, we can define the {\em $g$-shift} $\Gamma^{[g]}$ with the new degree $\deg_g(x) := \deg(x) + g$.

A $G$-grading on $\kan(\cV)$ is called {\em Kantor-compatible} if $\kan(\cV)^{-1}$ and $\kan(\cV)^1$ are $G$-graded spaces. Note that in this case $\kan(\cV)^0=\lspan [\kan(\cV)^1,\kan(\cV)^{-1}]$, $\kan(\cV)^2=\lspan [\kan(\cV)^1,\kan(\cV)^{1}]$ and $\kan(\cV)^{-2}=\lspan [\kan(\cV)^{-1},\kan(\cV)^{-1}]$ are graded too. In other words, a grading on $\kan(\cV)$ is Kantor-compatible if and only if it is compatible with the $\ZZ$-grading associated with the Kantor construction. A $G$-grading $\Gamma$ on $\cV$ can be extended to a Kantor-compatible $G$-grading $E_G(\Gamma):\kan(\cV)=\bigoplus_{g\in G} \kan(\cV)_g$ by setting 
\begin{align*}
& \kan(\cV)^{-2}_g = \lspan\left\{\left(\begin{matrix} 0 & K(\cV^-_{g_1},\cV^-_{g_2}) \\ 0 & 0 \end{matrix}\right)\med g_1+g_2=g  \right\}, \quad
\kan(\cV)^{-1}_g = \left(\begin{matrix} \cV^-_g \\ 0 \end{matrix}\right), \\
& \kan(\cV)^{0}_g = \lspan\left\{\left(\begin{matrix} D(x^-,x^+) & 0 \\ 0 & -D(x^+,x^-) \end{matrix}\right) 
  \med x^{\sigma}\in \cV^{\sigma}, \mathrm{deg}(x^+)+\mathrm{deg}(x^-)=g \right\}, \\
& \kan(\cV)^{1}_g = \left(\begin{matrix} 0 \\ \cV^+_g \end{matrix}\right), \quad
\kan(\cV)^{2}_g = \lspan\left\{\left(\begin{matrix} 0 & 0 \\ K(\cV^+_{g_1},\cV^+_{g_2}) & 0 \end{matrix}\right)\med g_1+g_2=g  \right\}.
\end{align*}
Note that we have $\kan(\cV)_g =\bigoplus_{i=-2}^{2} \kan(\cV)_g^i$ where $\kan(\cV)_g^i \coloneqq \kan(\cV)^i \cap \kan(\cV)_g$. 

Conversely, any Kantor-compatible $G$-grading $\widetilde{\Gamma}$ on $\kan(\cV)$ restricts to a $G$-grading $R_G(\widetilde{\Gamma})$ on $\cV$, because $\lbrace x^{\sigma}, y^{-\sigma}, z^{\sigma} \rbrace= [[x^{\sigma}, y^{-\sigma}],z^{\sigma}]$ for $x^{\sigma},z^{\sigma}\in \cV^{\sigma}$, $y^{-\sigma}\in \cV^{-\sigma}$ and $\sigma\in\lbrace+,-\rbrace$. (We identify $x^-$ with $\left(\begin{matrix} x^- \\ 0 \end{matrix}\right)$ and $x^+$ with $\left(\begin{matrix} 0 \\ x^+ \end{matrix}\right)$ for all $x^{\sigma}\in \cV^{\sigma}$.)

Denote by $\mathrm{Grad}_G(\cV)$ the set of $G$-gradings on $\cV$, and by $\mathrm{KGrad}_G(\kan(\cV))$ the set of Kantor-compatible $G$-gradings on $\kan(\cV)$. We will call $E_G:\mathrm{Grad}_G(\cV) \rightarrow \mathrm{KGrad}_G(\kan(\cV))$ the {\em extension map} and $R_G:\mathrm{KGrad}_G(\kan(\cV))\rightarrow\mathrm{Grad}_G(\cV)$ the {\em restriction map}.

The following result is an extension of the Jordan case in \cite[Th.~2.13]{Ara17}.

\begin{proposition} \label{gradings.correspondence.from.pairs}
Let $\cV$ be a Kantor pair with associated Lie algebra $\kan(\cV)$ and let $G$ be an abelian group. Then, the maps $E_G$ and $R_G$ are inverses of each other. Coarsenings are preserved by the correspondence, that is, given a $G_i$-grading $\Gamma_i$ on $\cV$ with extended $G_i$-grading $\widetilde{\Gamma}_i=E_{G_i}(\Gamma_i)$ on $\kan(\cV)$ for $i=1,2$, and a homomorphism $\alpha:G_1\rightarrow G_2$, then $\Gamma_2={^{\alpha}}\Gamma_1$ if and only if $\widetilde{\Gamma}_2={^{\alpha}}\widetilde{\Gamma}_1$. Let $\Gamma$ be a $G$-grading on $\cV$. If $G = \Univ(\Gamma)$, then $G = \Univ(E_G(\Gamma))$. Moreover, $\Gamma$ is fine and $G = \Univ(\Gamma)$ if and only if $E_G(\Gamma)$ is fine and $G = \Univ(E_G(\Gamma))$.
\end{proposition}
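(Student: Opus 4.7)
My plan is to handle the four assertions in the order stated, treating the final equivalence involving fineness as the main obstacle.

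First, for the bijection $E_G \leftrightarrow R_G$: the identity $R_G\circ E_G=\id$ is immediate from the definitions, since $E_G(\Gamma)$ places $\cV^\pm_g$ precisely at $\kan(\cV)^{\pm 1}_g$ and $R_G$ reads these off. For $E_G\circ R_G=\id$, I rely on the fact that any Kantor-compatible $G$-grading on $\kan(\cV)$ is uniquely determined by its restriction to $\kan(\cV)^{\pm 1}$: the bracket relations $\kan(\cV)^0=\lspan[\kan(\cV)^1,\kan(\cV)^{-1}]$ and $\kan(\cV)^{\pm 2}=\lspan[\kan(\cV)^{\pm 1},\kan(\cV)^{\pm 1}]$ force the graded decompositions on the remaining parts, and a direct computation of brackets (for instance $[x^+,y^-]\in\kan(\cV)^0_{g_1+g_2}$ when $x^+\in\cV^+_{g_1}$, $y^-\in\cV^-_{g_2}$) matches the explicit formulas defining $E_G$.

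For coarsening preservation, the argument is a direct formal check: if $\Gamma_2={}^\alpha\Gamma_1$ via a homomorphism $\alpha:G_1\to G_2$, then term-by-term $E_{G_2}(\Gamma_2)^i_{g_2}=\sum_{g_1\in\alpha^{-1}(g_2)}E_{G_1}(\Gamma_1)^i_{g_1}$, so $E_{G_2}(\Gamma_2)={}^\alpha E_{G_1}(\Gamma_1)$; the converse follows by applying $R$ to both sides. From this the universal group claim is a short consequence: $\supp E_G(\Gamma)$ lies in the subgroup of $G$ generated by $\supp\Gamma$ (degrees in $\kan(\cV)^0,\kan(\cV)^{\pm 2}$ are of the form $g_1\pm g_2$ with $g_i\in\supp\Gamma$), and the defining Lie-bracket relations of $\Univ(E_G(\Gamma))$ coincide with the triple-product relations of $\Univ(\Gamma)$ through the identity $\{x^\sigma,y^{-\sigma},z^\sigma\}=[[x^\sigma,y^{-\sigma}],z^\sigma]$. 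Hence $\Univ(E_G(\Gamma))\cong\Univ(\Gamma)=G$.

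For the final equivalence, one direction is easy: if $\Gamma$ admits a proper refinement $\Gamma'$ by a group $H$, then by coarsening preservation $E_H(\Gamma')$ is a refinement of $E_G(\Gamma)$, and the properness is inherited on $\kan(\cV)^{\pm 1}$. The converse is the main obstacle. My plan is to first establish that a fine grading $\Gamma$ on $\cV$ must admit a parity homomorphism $\pi:G\to\ZZ$ sending $\supp\Gamma^\pm$ to $\pm 1$: otherwise $\supp\Gamma^+\cap\supp\Gamma^-\neq\emptyset$ at some $g_0$, and the $G\times\ZZ$-grading placing $\cV^\sigma_g$ at $(g,\sigma)$ would be a proper refinement of $\Gamma$ (valid because the triple product satisfies $\sigma+(-\sigma)+\sigma=\sigma$), contradicting fineness. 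With $\pi$ in hand, each homogeneous component $E_G(\Gamma)_g$ lies entirely inside the single summand $\kan(\cV)^{\pi(g)}$, so any refinement $\widetilde\Gamma'$ of $E_G(\Gamma)$ by a group $H$ automatically has each of its homogeneous components inside a single $\kan(\cV)^i$; that is, $\widetilde\Gamma'$ is Kantor-compatible. Then $R_H(\widetilde\Gamma')$ is a refinement of $\Gamma$, which by fineness is non-proper, and the universal property of $G=\Univ(\Gamma)$ supplies a section $\phi:G\to H$ of the coarsening map $\alpha:H\to G$, forcing $\supp\widetilde\Gamma'\subseteq\phi(G)$ (since $\widetilde\Gamma'=E_H(R_H(\widetilde\Gamma'))$ and its $\pm 2,0$ degrees are sums of its $\pm 1$ degrees). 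Thus $\widetilde\Gamma'$ cannot be a proper refinement of $E_G(\Gamma)$, and together with the universal group identification, this closes the equivalence.
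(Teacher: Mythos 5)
Your overall route is essentially the paper's: mutual inverseness by construction, coarsening preservation by a term-by-term bracket computation, universal groups matched through the supports and the identity $\{x,y,z\}=[[x,y],z]$, and, for the crucial fineness step, the observation that the supports of the five pieces $\kan(\cV)^i$ are pairwise disjoint, so that every refinement of $E_G(\Gamma)$ is automatically Kantor-compatible and can be pushed back down to $\cV$. The paper extracts that disjointness from fineness of $E_G(\Gamma)$ within the class of Kantor-compatible gradings and packages the rest as the statement that the coarsening-theoretic characterization of ``fine with universal group $G$'' transfers across the correspondence; you instead derive the disjointness upstairs from a parity homomorphism on $\Univ(\Gamma)$ and then rule out proper Kantor-compatible refinements by hand via a section $\phi:G\to H$. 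Both work, and your closing argument ($\widetilde\Gamma'=E_H(R_H(\widetilde\Gamma'))={}^{\phi}E_G(\Gamma)$, a coarsening of $E_G(\Gamma)$, hence not a proper refinement of it) is sound. For the universal-group identification, claiming that the defining relations of $\Univ(E_G(\Gamma))$ ``coincide'' with those of $\Univ(\Gamma)$ requires checking that every bracket relation involving $\kan(\cV)^0$ and $\kan(\cV)^{\pm2}$ is a consequence of the degree-$(\pm1)$ ones; the double application of the universal property (a homomorphism each way, both compositions the identity on generating supports) is cleaner and is what the paper does.

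One step needs repair. You justify the parity homomorphism $\pi\colon G\to\ZZ$ by saying that ``otherwise $\supp\Gamma^+\cap\supp\Gamma^-\neq\emptyset$,'' i.e., you deduce the existence of $\pi$ from the disjointness of the two supports. That implication is false in general: the supports can be disjoint while no homomorphism $G\to\ZZ$ separates them (take $G=\ZZ_3$ with $\supp\Gamma^+=\{\bar1\}$ and $\supp\Gamma^-=\{\bar2\}$; the only homomorphism $\ZZ_3\to\ZZ$ is trivial). Moreover, disjointness of $\supp\Gamma^+$ and $\supp\Gamma^-$ alone does not suffice for your next step: to place each component $E_G(\Gamma)_g$ inside a single $\kan(\cV)^i$ you must also separate, for instance, $\supp\Gamma^+$ from $\supp\Gamma^++\supp\Gamma^-$, which genuinely requires the homomorphism. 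The correct argument (Proposition~\ref{finegradingproperty}, following \cite[Prop.~2.2]{Ara17}) is: the $G\times\ZZ$-grading placing $\cV^\sigma_g$ in degree $(g,\sigma1)$ refines $\Gamma$; by fineness it has the same homogeneous components, so it is another realization of $\Gamma$, and the universal property of $G=\Univ(\Gamma)$ yields a homomorphism from $G$ to the subgroup of $G\times\ZZ$ generated by the new support which is the identity on supports; composing with the projection onto $\ZZ$ gives $\pi$. With that substitution your proof goes through.
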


\begin{proof}
By construction $E_G$ and $R_G$ are inverses of each other.

Assume that $\Gamma_2={^{\alpha}}\Gamma_1$ for some homomorphism $\alpha:G_1\rightarrow G_2$.  Since $\cV^{\pm 1}_g\subseteq \cV^{\pm 1}_{\alpha(g)}$ for any $g\in G$, we have $\kan(\cV)_g^{\pm 1}\subseteq \kan(\cV)_{\alpha(g)}^{\pm 1}$ and 
\[\kan(\cV)_g^{i+j}=\sum_{g_1+g_2=g}[\kan(\cV)_{g_1}^i,\kan(\cV)_{g_2}^j] \subseteq \sum_{g_1+g_2=g}[\kan(\cV)_{\alpha(g_1)}^i,\kan(\cV)_{\alpha(g_2)}^j] \subseteq \kan(\cV)^{i+j}_{\alpha(g)}\]
for $i,j\in\lbrace 1,-1\rbrace$. Then $\kan(\cV)_{g}\subseteq \kan(\cV)_{\alpha(g)}$ for any $g\in G$. Hence $\widetilde{\Gamma}_1$ refines $\widetilde{\Gamma}_2$ and $\widetilde{\Gamma}_2 = {^{\alpha}}\widetilde{\Gamma}_1$. Conversely, if $\widetilde{\Gamma}_2 = {^{\alpha}}\widetilde{\Gamma}_1$, by restriction we obtain $\Gamma_2={^{\alpha}}\Gamma_1$. This proves that the coarsenings are preserved.

Consider $\widetilde{\Gamma}=E_G(\Gamma)$ with $G=\Univ(\Gamma)$. Note that $\supp \Gamma$ generates $\Univ(\Gamma)$ and $\Univ(\widetilde{\Gamma})$. Since the $\Univ(\widetilde{\Gamma})$-grading $\widetilde{\Gamma}$ restricts to $\Gamma$ as a $\Univ(\widetilde{\Gamma})$-grading, there is a unique homomorphism $G=\Univ(\Gamma)\rightarrow \Univ(\widetilde{\Gamma})$ which restricts to the identity in $\supp (\Gamma)$. Conversely, $\Gamma$ extends to $\widetilde{\Gamma}$ as a $G$-grading, so there is a unique homomorphism $\Univ(\widetilde{\Gamma}) \rightarrow G$ which restricts to the identity map in $\supp(\widetilde{\Gamma})$ and in $\supp(\Gamma)$. It is clear that $\Univ(\widetilde{\Gamma})=\langle \supp \Gamma \rangle$. Therefore the compositions $G \rightarrow \Univ(\widetilde{\Gamma})\rightarrow G$ and $\Univ(\widetilde{\Gamma})\rightarrow G \rightarrow \Univ(\widetilde{\Gamma})$ are the identity map, and $G=\Univ(\widetilde{\Gamma})$.

Suppose again that $\widetilde{\Gamma}=E_G(\Gamma)$. Note that $\Gamma$ is a fine $G$-grading on $\cV$ with $G=\Univ(\Gamma)$ if and only if $\supp \Gamma$ generates $G$ and $\Gamma$ satisfies the following property: if $\Gamma= {^{\alpha}}\Gamma_0$ for some $G_0$-grading $\Gamma_0$ on $\cV$, where $G_0$ is generated by $\supp \Gamma_0$ and $\alpha:G_0\rightarrow G$ is an epimorphism, then $\alpha$ is an isomorphism. The same is true for Kantor-compatible gradings. Since the coarsenings are preserved in the correspondence, so does this property (also note that $\supp\Gamma$ generates $G$ if and only if $\supp\widetilde{\Gamma}$ generates $G$). Then we get that $\Gamma$ is fine and $G=\Univ(\Gamma)$ if and only if $E_G(\Gamma)$ is fine in the class of Kantor-compatible gradings and $G=\Univ(E_G(\Gamma))$. Moreover, if $\widetilde{\Gamma}$ is fine in the class of Kantor-compatible gradings, then the supports of $\kan(\cV)^{i}$ for $i=-2,-1,0,1,2$ are disjoint and therefore  $\widetilde{\Gamma}$ is also fine in the class of all abelian group gradings on $\kan(\cV)$. Therefore $\Gamma$ is fine and $G=\Univ(\Gamma)$ if and only if $E_G(\Gamma)$ is fine and $G=\Univ(E_G(\Gamma))$.
\end{proof}

Note that if $\Gamma$ is a grading on a Kantor pair $\cV$ given by its universal group $G = \Univ(\Gamma)$, since automorphisms extend from $\cV$ to $\kan(\cV)$, we have that $\cW(\Gamma) \leq \cW(E_G(\Gamma))$.

\subsection{Gradings on Hurwitz algebras}

Recall that for a Hurwitz algebra $C$, the equation 
\begin{equation}
x^2 - n(x,1)x +n(x)1 =0\label{eq_quadratic_Hurwitz} \\
\end{equation}
 is satisfied by any $x\in C$, where $n$ denotes both the norm of $C$ and its polar form. The norm is nondegenerate and multiplicative ($n(xy)=n(x)n(y)$ for any $x,y \in C$), and the \textit{trace} of $x\in C$ is defined by $t(x)\coloneqq n(x,1)$.
It was proven in \cite{AM99} that the split Cayley algebra $\cC$ has a homogeneous basis $B_\CD(\cC) \coloneqq \{x_g\}_{g\in\ZZ_2^3}$ associated with a (fine) $\ZZ_2^3$-grading, with degree map $\deg(x_g) \coloneqq g$, and such that the product is given by
\begin{equation}
x_g x_h \coloneqq \sigma(g,h) x_{g+h}
\end{equation}
with
\begin{align*}
& \sigma(g,h) = \sigma_{g, h} \coloneqq (-1)^{\psi(g,h)}, \\
& \psi(g,h) \coloneqq h_1 g_2 g_3 + g_1 h_2 g_3 + g_1 g_2 h_3 + \sum_{i\leq j} g_i h_j,
\end{align*}
for any $g = (g_1, g_2, g_3)$, $h = (h_1, h_2, h_3) \in \ZZ_2^3$.
The multiplication constants of this basis satisfy the property $\sigma_{g, h+k} = \sigma_{g,h}\sigma_{g,k}$ for any $g,h,k\in\ZZ_2^3$; in other words, $\sigma_g \in \widehat{\ZZ}_2^3$ for all $g\in \ZZ_2^3$, where we denote $\sigma_g(h) := \sigma_{g,h}$ for $h\in \ZZ_2^3$. (Here, $\widehat{G}$ denotes the character group of an abelian group $G$.) Moreover, $x_0 = 1$, $B_\CD(\cC)$ is an orthonormal basis relative to the norm, and the involution is given by $\bar x_g = \sigma_{g,g} x_g$ for any $g\in\ZZ_2^3$. We will refer to $B_\CD(\cC)$ as a {\it Cayley-Dickson basis} of $\cC$.

Furthermore, for each subgroup $\ZZ_2^m \cong H \leq \ZZ_2^3$ with $m\in\{0,1,2\}$, we have that $C \coloneqq \lspan\{ x_h \}_{h\in H}$ is an $H$-graded Hurwitz algebra of dimension $2^m$, and the basis $B_\CD(C)$ of $C$ inherits the good properties from the basis $B_\CD(\cC)$; we will say that $B_\CD(C)$ is a {\it Cayley-Dickson basis} of $C$.

Let $\{a_i\}_{i=1}^3$ be the canonical basis of $\ZZ_2^3$.
Then, if we consider $\ZZ_2^3$ with the order given by
$$(0, a_1, a_2, a_1 + a_2, a_3, a_1 + a_3, a_2 + a_3, a_1 + a_2 + a_3),$$
the multiplication constants for the Cayley-Dickson basis on $\cC$ are given by
\begin{equation} \label{sigmamatrix}
(\sigma_{g,h})_{g,h\in\ZZ_2^3} = \left[\begin{array}{rrrrrrrr}
\multicolumn{1}{r|}{1} & \multicolumn{1}{r|}{1} & 1 & \multicolumn{1}{r|}{1} & 1 & 1 & 1 & 1 \\
\cline{1-1}
1 & \multicolumn{1}{r|}{-1} &  1 & \multicolumn{1}{r|}{-1} & -1 &  1 & -1 & 1 \\
\cline{1-2}
1 & -1 & -1 & \multicolumn{1}{r|}{1}  & -1 &  1 &  1 & -1 \\
1 &  1 & -1 & \multicolumn{1}{r|}{-1} & -1 & -1 &  1 &  1 \\
\cline{1-4}
1 &  1 &  1 &  1   &   -1 & -1 & -1 & -1 \\
1 & -1 & -1 &  1   &    1 & -1 & -1 &  1 \\
1 &  1 & -1 & -1   &    1 &  1 & -1 & -1 \\
1 & -1 &  1 & -1   &    1 & -1 &  1 & -1
\end{array}\right],
\end{equation}
where the highlighted submatrices above correspond to the constants for the cases $\dim C = 1,2,4$.

\bigskip

There is a basis $B_\ZZ(\cC) \coloneqq \{e_1,e_2,u_1,u_2,u_3,v_1,v_2,v_3\}$ of the split Cayley algebra $\cC$ where the multiplication is given as in
Figure~\ref{product.cartan.basis} (\cite[\S4.1]{EKmon}). We will call this basis a {\it Cartan basis} of $\cC$.

\begin{figure}[htbp]
\abovetabulinesep = 1.5mm
\belowtabulinesep = 1.5mm
\begin{tabu}{c|[1pt]rrrrrrrr}
 & $e_1$ & $e_2$ & $u_1$ & $u_2$ & $u_3$ & $v_1$ & $v_2$ & $v_3$ \\
 \tabucline[1pt]{1-9}
 $e_1$ & $e_1$ & \multicolumn{1}{r|}{0} & $u_1$ & $u_2$ & \multicolumn{1}{r|}{$u_3$} & 0 & 0 & \multicolumn{1}{r|[1pt]}{0} \\[-1pt]
 $e_2$ & 0 & \multicolumn{1}{r|}{$e_2$} & 0 & 0 & \multicolumn{1}{r|}{0} & $v_1$ & $v_2$ & \multicolumn{1}{r|[1pt]}{$v_3$} \\[-1pt]
 \tabucline{2-9}
 $u_1$ & 0 & \multicolumn{1}{r|}{$u_1$} & 0 & $v_3$ & \multicolumn{1}{r|}{$-v_2$} & $-e_1$ & 0 & \multicolumn{1}{r|[1pt]}{0} \\[-1pt]
 $u_2$ & 0 & \multicolumn{1}{r|}{$u_2$} & $-v_3$ & 0 & \multicolumn{1}{r|}{$v_1$} & 0 & $-e_1$ & \multicolumn{1}{r|[1pt]}{0} \\[-1pt]
 $u_3$ & 0 & \multicolumn{1}{r|}{$u_3$} & $v_2$ & $-v_1$ & \multicolumn{1}{r|}{0} & 0 & 0 & \multicolumn{1}{r|[1pt]}{$-e_1$} \\[-1pt]
 \tabucline{2-9}
 $v_1$ & $v_1$ & \multicolumn{1}{r|}{0} & $-e_2$ & 0 & \multicolumn{1}{r|}{0} & 0 & $u_3$ & \multicolumn{1}{r|[1pt]}{$-u_2$} \\[-1pt]
 $v_2$ & $v_2$ & \multicolumn{1}{r|}{0} & 0 & $-e_2$ & \multicolumn{1}{r|}{0} & $-u_3$ & 0 & \multicolumn{1}{r|[1pt]}{$u_1$} \\[-1pt]
 $v_3$ & $v_3$ & \multicolumn{1}{r|}{0} & 0 & 0 & \multicolumn{1}{r|}{$-e_2$} & $u_2$ & $-u_1$ & \multicolumn{1}{r|[1pt]}{0} \\[-1pt]
 \tabucline[1pt]{2-9}
\end{tabu}
\caption{Multiplication for the Cartan basis}
\label{product.cartan.basis}
\end{figure}
If $C$ is a Hurwitz algebra with $\dim C = 1,2,4$, respectively, then we define the \emph{Cartan basis} $B_\ZZ(C)$ of $C$ as $\{e_1\}$, $\{e_1, e_2\}$ or $\{e_1, e_2, u_1, v_1\}$, respectively, with the same products as in Figure~\ref{product.cartan.basis}.

Recall from \cite[\S4.1]{EKmon} that there is a fine $\ZZ^2$-grading on $\cC$, where the Cartan basis is homogeneous and the degree map is given by

\begin{equation} \label{cartan.degree.C}
\begin{aligned}
& \deg(e_1)=(0,0)=\deg(e_2), \; & \deg(u_1)=(1,0)=-\deg(v_1), \\
&\deg(u_2)=(0,1)=-\deg(v_2), \; & \deg(v_3)=(1,1)=-\deg(u_3).
\end{aligned}
\end{equation}
If $\dim C=4$, we have a fine $\ZZ$-grading on $C$ where the Cartan basis is homogeneous with degree map given by $\deg(e_i)=0$ for $i=1,2$ and $\deg(u_1)=1=-\deg(v_1)$. For the cases where $\dim C=1,2$, the only grading on $C$ where the Cartan basis is homogeneous is the trivial grading (see \cite[Remark 4.16]{EKmon}).

\bigskip

It is straightforward to see that we can obtain a Cayley-Dickson basis from a Cartan basis via the expressions:
\begin{equation} \label{cd.from.cartan}
\begin{aligned}
x_0 &= e_1 + e_2, & x_{a_1} &= -\bi(e_1 - e_2), \\
x_{a_2} &= u_1 + v_1, & x_{a_1 + a_2} &= -\bi(u_1 - v_1), \\
x_{a_3} &= u_2 + v_2, & x_{a_1 + a_3} &= \bi(u_2 - v_2), \\
x_{a_2 + a_3} &= -(u_3 + v_3), & x_{a_1 + a_2 + a_3} &= -\bi(u_3 - v_3),
\end{aligned}
\end{equation}
where $\bi$ is a square root of $-1$ in $\FF$, or equivalently:
\begin{equation} \label{cartan.from.cd}
\begin{aligned}
e_1 &= (x_0 + \bi x_{a_1})/2, & e_2 &= (x_0 - \bi x_{a_1})/2, \\
u_1 &= (x_{a_2} + \bi x_{a_1+a_2})/2, & v_1 &= (x_{a_2} - \bi x_{a_1+a_2})/2, \\
u_2 &= (x_{a_3} - \bi x_{a_1+a_3})/2, & v_2 &= (x_{a_3} + \bi x_{a_1+a_3})/2, \\
u_3 &= -(x_{a_2+a_3} - \bi x_{a_1+a_2+a_3})/2, & v_3 &= -(x_{a_2+a_3} + \bi x_{a_1+a_2+a_3})/2. \\
\end{aligned}
\end{equation}

\begin{notation} \label{NotationCartan}
In order to study gradings on Kantor pairs and triple systems in further sections, it may be convenient to introduce the following notation, which is better suited to describe the new symmetries appearing.

Let $\bi$ be a square root of $-1$ in $\FF$. Consider the parity operator $|a| \coloneqq \text{ord}(a) - 1 \in \{0, 1\}$ for $a\in\ZZ_2$; also note that 
\begin{equation} \label{bi.equation}
\bi^{|a| + |b|} = \sigma_{a, b} \bi^{|a + b|} \quad \text{and} \quad (-1)^{|a|} = \sigma_{a,a}
\end{equation}
for $a,b\in\ZZ_2$, where $\sigma_{a,b}$ denotes the corresponding multiplication constant for a Cayley-Dickson basis of the $2$-dimensional Hurwitz algebra.
Let $C$ be a Hurwitz algebra with $\dim C = 2^m > 1$ and consider a Cayley-Dickson basis $B_\CD(C) = \{x_g\}_{g\in\ZZ_2^m}$. Using the same basis $\{a_i\}_{i=1}^m$ associated with \eqref{sigmamatrix}, we will denote $x_g^a \coloneqq x_{(g, a)}$ for $(g, a) \in \ZZ_2^{m-1} \times \ZZ_2 \equiv \ZZ_2^m$, where the group $\ZZ_2$ is generated by $a_1$, and on the other hand, $\ZZ_2^{m-1}$ is generated by $\{a_2, a_3\}$ if $m = 3$ and by $\{a_2\}$ if $m = 2$. It is also straightforward to see that
\begin{equation} \label{sigma.other.equation}
\sigma_{a,k} \sigma_{k,a} = (-1)^{|a||k|} = \sigma_{a,a}^{|k|} = \sigma_{k,k}^{|a|}
\end{equation}
for each $(k, a) \in \ZZ_2^{m-1} \times \ZZ_2$. We will denote
\begin{equation} \label{second.cartan.basis}
v^\alpha_g \coloneqq \frac{\sqrt{2}}{4} \sum_{a\in\ZZ_2} \alpha(a) \sigma_{a, g} \bi^{|a|} x_g^a,
\end{equation}
for each $g\in\ZZ_2^{m-1}$ and $\alpha\in\widehat{\ZZ}_2$. We can recover the original Cayley-Dickson basis via the expression:
\begin{equation}
x^a_g = \sqrt{2} \sum_{\alpha\in\widehat{\ZZ}_2} \alpha(a) \sigma_{a, g} (-\bi)^{|a|} v_g^\alpha,
\end{equation}
for each $(g, a)\in\ZZ_2^{m-1} \times \ZZ_2$. Indeed,
\begin{equation*} \begin{aligned}
\sqrt{2} & \sum_{\alpha\in\widehat{\ZZ}_2} \alpha(a) \sigma_{a, g} (-\bi)^{|a|} v_g^\alpha
= \frac{1}{2} \sum_{\alpha\in\widehat{\ZZ}_2} \alpha(a) \sigma_{a, g} (-\bi)^{|a|}
\sum_{b\in\ZZ_2} \alpha(b) \sigma_{b, g} \bi^{|b|}x_g^b \\
&= \sum_{b\in\ZZ_2} \sigma_{a, g} \sigma_{b, g} (-\bi)^{|a|} \bi^{|b|} \left[\frac{1}{2}
\sum_{\alpha\in\widehat{\ZZ}_2} \alpha(a + b) \right] x_g^b \\
&= \sum_{b\in\ZZ_2} \sigma_{a, g} \sigma_{b, g} (-\bi)^{|a|} \bi^{|b|} \delta_{a, b} x_g^b  = x_g^a,
\end{aligned} \end{equation*}
where we have used \eqref{bi.equation}.

Also, it is easy to see from \eqref{cd.from.cartan} that the basis $\{v_g^\alpha \med g\in\ZZ_2^{m-1}, \alpha\in\widehat{\ZZ}_2\}$
is a homogeneous basis for the Cartan grading because when $m = 3$ we have that:
\begin{equation} \label{cartan.bases.relation} \begin{aligned}
v^\mathds{1}_0 &= e_1 / \sqrt{2}, \;& v^\omega_0 &= e_2 / \sqrt{2}, \;&
v^\mathds{1}_{a_2} &= u_1 / \sqrt{2}, \;& v^\omega_{a_2} &= v_1 / \sqrt{2}, \\
v^\mathds{1}_{a_3} &= u_2 / \sqrt{2}, \;& v^\omega_{a_3} &= v_2 / \sqrt{2}, \;&
v^\mathds{1}_{a_2 +a_3} &= -u_3 / \sqrt{2}, \;& v^\omega_{a_2+a_3} &= -v_3 / \sqrt{2},
\end{aligned} \end{equation}
where $\widehat{\ZZ}_2 = \langle\omega\rangle$.

Furthermore, we claim that
\begin{equation} \label{product.cd.by.cartan}
x_g^h v_k^\alpha = 
(\alpha \omega^{|k|} \sigma_k)(h) \sigma_{(g,h),(k,h)} \bi^{|h|}
v_{g+k}^{\alpha \omega^{|g|}} \in \FF v_{g+k}^{\alpha \omega^{|g|}}
\end{equation}
for each $g,k \in\ZZ_2^{m-1}$, $h\in\ZZ_2$, $\alpha\in\widehat{\ZZ}_2$, where $\widehat{\ZZ}_2 = \langle\omega\rangle$.
Indeed, we can first check that:
\begin{equation*} \begin{aligned}
x_g^h v_k^\alpha &= x_g^h \frac{\sqrt{2}}{4} \sum_{a\in\ZZ_2} \alpha(a) \sigma_{a,k} \bi^{|a|} x_k^a
= \frac{\sqrt{2}}{4} \sum_{a\in\ZZ_2} \alpha(a) \sigma_{a,k} \sigma_{(g,h), (k,a)} \bi^{|a|} x_{g+k}^{h+a} \\
&= \frac{\sqrt{2}}{4} \sum_{a\in\ZZ_2} \alpha(a+h) \sigma_{a+h,k} \sigma_{(g,h), (k,a+h)} \bi^{|a+h|} x_{g+k}^a \\
&= \alpha(h) \sigma_{(g,h),(k,h)} \bi^{|h|} \frac{\sqrt{2}}{4} \sum_{a\in\ZZ_2}
\big( \alpha \sigma_{(g,h)} \sigma_h \big)(a) \cdot \\
&\qquad \cdot \Big[\sigma_{k,a+h}(-1)^{|a+h||k|} \Big]
\Big[\sigma_{a,g+k}\sigma_{g+k,a}(-1)^{|a||g+k|} \Big] \bi^{|a|} x_{g+k}^a \\
&= \alpha(h) \sigma_{(g,h),(k,h)} \sigma_{k,h} \bi^{|h|} \frac{\sqrt{2}}{4} \sum_{a\in\ZZ_2}
\big( \alpha \sigma_{(g,h)}\sigma_{g+k} \sigma_h \sigma_k \big)(a) \cdot \\
& \qquad \cdot \sigma_{a,g+k} \omega^{|g+k|}(a) \omega^{|k|}(a+h) \bi^{|a|} x_{g+k}^a \\
&= (\alpha \omega^{|k|} \sigma_k)(h) \sigma_{(g,h),(k,h)} \bi^{|h|}
v_{g+k}^{\alpha \sigma_{(g,h)} \sigma_{g+k} \sigma_h \sigma_k \omega^{|g+k|+|k|}}
\end{aligned} \end{equation*}
where we have used \eqref{bi.equation}, \eqref{sigma.other.equation} and the property $(-1)^{|a|} = \omega(a)$ for $a\in\ZZ_2$.
If we denote $\psi \coloneqq \alpha \sigma_{(g,h)} \sigma_{g+k} \sigma_h \sigma_k \omega^{|g+k|+|k|}$,
we need to prove that $\psi = \alpha \omega^{|g|}$. The second column of $\sigma$ in \eqref{sigmamatrix} shows that
$\sigma_{(g,h)}(a) = (\sigma_g\sigma_h)(a)$ for $a\in\ZZ_2 = \langle a_1 \rangle$, so that $\psi = \alpha \sigma_g \sigma_{g+k} \sigma_k \omega^{|g+k|+|k|}$.
It is clear that \eqref{product.cd.by.cartan} follows from the equalities above in the case $g=0$, so we can assume from now on that $g\neq 0$.
If $k\in\{0,g\}$, then $\sigma_g \sigma_{g+k} \sigma_k = \mathds{1}$ and $\omega^{|g+k|+|k|} = \omega = \omega^{|g|}$; 
on the other hand, if $k\notin\{0,g\}$, we have that $\omega^{|g+k|+|k|} = \omega^2 = \mathds{1}$ and $\psi = \alpha\sigma_{a_2}\sigma_{a_3}\sigma_{a_2+a_3}=\alpha\omega$; in all cases \eqref{product.cd.by.cartan} holds, which proves the claim.
\end{notation}

\begin{remark}
In further sections we will see that, on the Kantor pairs of Hurwitz type, the pairs $(v^{\alpha}_g, v^{\omega\alpha}_g)$ are homogeneous idempotents for a grading that is related to the Cartan grading on the associated Kantor-Lie algebra. Furthermore, we will use the fact that \eqref{product.cd.by.cartan} describes certain automorphisms of the associated Kantor triple systems that permute the subspaces $\FF v_g^{\alpha}$.
\end{remark}

\subsection{Kantor systems of Hurwitz type}
It may be convenient to introduce the following terminology:
\begin{df}
Kantor pairs and triple systems associated with a Hurwitz algebra $C$ will be referred to as \emph{Hurwitz pairs} and \emph{Hurwitz triples}, and denoted by $\cV_C$ and $\cT_C$, respectively; the term \emph{Hurwitz (Kantor)  system} will be used to refer to any of them. For each possible case for $\dim C = 1,2,4,8$, the corresponding Hurwitz systems will be called Hurwitz systems of types \emph{unarion}, \emph{binarion}, \emph{quaternion} and \emph{octonion}, respectively.
\end{df}

\begin{remark}
A straightforward calculation shows that the $U$-operator for a Kantor system associated with a Hurwitz algebra $C$ is given by
\begin{equation}
U_x(y) \coloneqq \{x, y, x\} = 2 n(x,y)x -3 n(x)y \label{eq_Ux_Hurwitz} \\
\end{equation}
for any $x,y\in C$.
\end{remark}

\begin{proposition}
Let $C$ be a Hurwitz algebra of dimension greater than $1$. Then, the main grading on the Kantor-Lie algebra $\kan(C) := \kan(\cV_C)$ produces a Jordan pair given by $\big(\kan(C)^2, \kan(C)^{-2} \big)$ that is isomorphic to a simple Jordan pair of type ${\rm IV}_k$ with $k = \dim \cS(C)$.
\end{proposition}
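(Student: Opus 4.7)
The plan is to view $(\kan(C)^2,\kan(C)^{-2})$ as the Jordan pair obtained from the $3$-graded Lie subalgebra $\kan(C)^{-2}\oplus[\kan(C)^2,\kan(C)^{-2}]\oplus\kan(C)^2$ of $\kan(C)$, with triple product $\{x,y,z\}:=[[x,y],z]$, and then compute it explicitly to match with type $\mathrm{IV}_k$. The matrix realization of $\mathfrak{S}(\cV_C)$ together with the identifications $\kan(C)^{\pm 2}\cong L_{\cS}\cong\cS(C)$ (via $K(u,v)=L_{u\bar v-v\bar u}$, cf.\ \cite[(1.3)]{AF84}) reduce everything to an operator identity on $C$.

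Setting $x=\matr{0 & 0 \\ B & 0}$, $y=\matr{0 & A \\ 0 & 0}$, $z=\matr{0 & 0 \\ C & 0}$ with $A,B,C\in L_{\cS}$, a direct block-matrix commutator computation yields
\[[[x,y],z]=\matr{0 & 0 \\ BAC+CAB & 0}.\]
So with $B=L_{s_1}$, $A=L_{s_2}$, $C=L_{s_3}$ for $s_i\in\cS$, the Jordan pair triple product corresponds to the operator $L_{s_1}L_{s_2}L_{s_3}+L_{s_3}L_{s_2}L_{s_1}\in\End(C)$.

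Since Hurwitz algebras are alternative, $L_xL_y+L_yL_x=L_{xy+yx}$. From $s^2=-n(s)\cdot 1$ for $s\in\cS$ (a consequence of \eqref{eq_quadratic_Hurwitz} since $t(s)=0$) and its polarization $s_1s_2+s_2s_1=-n(s_1,s_2)\cdot 1$, we deduce $L_s^2=-n(s)\id$ and $L_{s_i}L_{s_j}+L_{s_j}L_{s_i}=-n(s_i,s_j)\id$. A short rewriting then yields $L_sL_{s'}L_s=n(s)L_{s'}-n(s,s')L_s$, and polarizing $s\mapsto s_1+s_3$ produces
\[L_{s_1}L_{s_2}L_{s_3}+L_{s_3}L_{s_2}L_{s_1}=n(s_1,s_3)L_{s_2}-n(s_1,s_2)L_{s_3}-n(s_2,s_3)L_{s_1}=L_{s'},\]
with $s':=n(s_1,s_3)s_2-n(s_1,s_2)s_3-n(s_2,s_3)s_1\in\cS$. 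Since $s\mapsto L_s$ is injective (because $L_s(1)=s$), the Jordan pair triple product on $(\cS^+,\cS^-)$ is
\[\{s_1,s_2,s_3\}=n(s_1,s_3)s_2-n(s_1,s_2)s_3-n(s_2,s_3)s_1.\]

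This is exactly the type $\mathrm{IV}_k$ Jordan pair triple product on $\cS$ with the symmetric bilinear form $b:=-n|_{\cS}$, where $k:=\dim\cS(C)=\dim C-1$. Nondegeneracy of $n|_{\cS}$ follows from $n(1)\neq 0$ and $\cS=(\FF 1)^\perp$, so the pair is simple. The only delicate point is bookkeeping the identifications of $\kan(C)^{\pm 2}$ with $\cS$ consistently through the matrix picture; the technical heart of the argument is the brief polarization of $L_sL_{s'}L_s$, which uses only alternativity and \eqref{eq_quadratic_Hurwitz}.
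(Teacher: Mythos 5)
Your proposal is correct and follows essentially the same route as the paper's proof: identify $\kan(C)^{\pm 2}$ with $L_{\cS}$, compute the triple product as $L_xL_yL_z+L_zL_yL_x$ in the block-matrix model, and reduce it to the quadratic form $n|_{\cS}$ using alternativity (the paper invokes the left Moufang identity $L_xL_yL_x=L_{xyx}$ where you use the linearized left alternative law, which amounts to the same computation on skew elements). The only cosmetic difference is at the end: the paper twists the identification by a square root of $-1$ so that the form becomes $n|_{\cS}$ itself, whereas you keep $b=-n|_{\cS}$, which is harmless since rescaling the quadratic form yields an isomorphic Jordan pair.
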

\begin{proof}
Denote $\cV = \big(\kan(C)^2, \kan(C)^{-2} \big)$ and $\cS = \cS(C)$. Let $\cW_k = (\FF^k, \FF^k)$ denote the simple Jordan pair of type $\text{IV}_k$ (see \cite[Chapter 4]{L75}); recall that the triple product of $\cW_k$ is determined by $Q_x(y) := \frac{1}{2} \{x,y,x\} = q(x,y)x - q(x)y$, where $q$ is the quadratic form associated with the standard scalar product of $\FF^k$. By restriction of the main grading on $\kan(C)$, it is clear that $\cV$ generates a Kantor-Lie algebra whose main grading is a $3$-grading, which forces $\cV$ to be a Jordan pair.

By definition of the Kantor construction, we have that $\cV^\sigma = \{\iota^\sigma(L_x) \med x\in \cS\}$ for $\sigma = \pm$, where
$\iota^\sigma \colon \cV^\sigma \to \kan(C)^{\sigma 2} \subseteq \kan(C)$
are the inclusions given by
\begin{equation} \label{inclusions.kantor}
\iota^+(f) := \left( \begin{matrix} 0 & 0 \\ f & 0 \end{matrix} \right), \quad
\iota^-(f) := \left( \begin{matrix} 0 & f \\ 0 & 0 \end{matrix} \right).
\end{equation}
A straightforward computation shows that
$$
\{\iota^\sigma(L_x), \iota^{-\sigma}(L_y), \iota^\sigma(L_z)\} = 
\big[[\iota^\sigma(L_x), \iota^{-\sigma}(L_y)], \iota^\sigma(L_z)\big] =
\iota^\sigma(L_x L_y L_z + L_z L_y L_x).
$$
By the left Moufang identity (\cite[Chapter 2]{ZSSS82}) we know that $L_x L_y L_x = L_{xyx}$. 
Since $\bar x = -x$, from the identities $\bar a b + \bar b a = n(a, b)$, $(a b) b = a b^2 $ and $\bar a a = n(a)$,
we get that $xyx = - (\bar x y) x =  - n(x,y)x + y (\bar x x) = n(x)y - n(x,y)x$. Thus
$$
\frac{1}{2} \{\iota^\sigma(L_x), \iota^{-\sigma}(L_y), \iota^\sigma(L_x)\} =
\iota^\sigma(L_x L_y L_x) = \iota^\sigma(L_{xyx}) = \iota^\sigma(L_{n(x)y - n(x,y)x}).
$$
Let $\bi$ be a square root of $-1$ in $\FF$. Then the map 
$$\cW_k^\sigma \equiv \cS \longrightarrow \cV^\sigma, \quad x \longmapsto \bi \iota^{\sigma}(L_x)$$
defines an isomorphism, where the quadratic form on $\cS$ is just $n{\big|}_\cS$.
\end{proof}

\subsection{Peirce and root space decompositions}

In this section, we will give a result that, under certain restrictions, gives a relation between a Peirce decomposition on a Kantor pair and the root system of the associated Kantor-Lie algebra.

\begin{df}
An element $e=(e^+,e^-) \in \cV=(\cV^+,\cV^-)$ is called an {\em idempotent} of the Kantor pair $\cV$ if $\lbrace e^{\sigma}, e^{-\sigma}, e^{\sigma} \rbrace = e^{\sigma}$ for each $\sigma\in \{ +,- \}$. Define the linear operators $L^{\sigma}(e), R^{\sigma}(e) \in \End(\cV^{\sigma})$ as $L^{\sigma}(e)x^{\sigma} := \{ e^{\sigma}, e^{-\sigma}, x^{\sigma} \}$ and $R^{\sigma}(e)x^{\sigma} := \{x^{\sigma}, e^{-\sigma}, e^{\sigma} \}$ for $x^{\sigma} \in \cV^{\sigma}$ and $\sigma\in \{+,- \}$. 

Similarly, an element $e$ of a Kantor triple system $\cT$ is called a \emph{tripotent} if $\{e,e,e\} = e$, and we define the operators $L(e), R(e) \in \End(\cT)$ as $L(e)x := \{ e, e, x \}$ and $R(e)x := \{x, e, e\}$ for $x \in \cT$.
\end{df}

It is well-known \cite{KK03} that, if $\chr\FF\neq 2,3,5$, a tripotent $e$ of a Kantor triple system $\cT$ produces a \emph{Peirce decomposition} given by
\begin{equation}
\cT= \cT_{0,0} \oplus \cT_{\frac{1}{2},\frac{1}{2}} \oplus \cT_{1,1} \oplus \cT_{\frac{3}{2},\frac{3}{2}} \oplus \cT_{-\frac{1}{2},0} \oplus \cT_{0,1} \oplus \cT_{\frac{1}{2},2} \oplus \cT_{1,3}
\end{equation}
where 
\begin{equation}
\cT_{\lambda, \mu} = \cT_{\lambda, \mu}(e) := \{x\in\cT \med L(e)x = \lambda x, R(e)x = \mu x\}.
\end{equation}
For the Peirce subspace $\cT_{\lambda, \mu}$, the scalars $\lambda$ and $\mu$ are called its associated \emph{Peirce constants}. Peirce decompositions are important in theory of Jordan pairs and Jordan triple systems (see for instance \cite{MC03}), and have also been studied in the case of structurable algebras, but it is unknown to the authors if these have appeared in the literature in the context of non-Jordan Kantor pairs. Note that in the case of Kantor pairs, the Peirce subspaces associated with an idempotent can be defined by:
\begin{equation}
\cV_{\lambda, \mu}^{\sigma} = \cV_{\lambda, \mu}^{\sigma}(e) := \{x\in\cV^{\sigma} \med L^{\sigma}(e)x = \lambda x, R^{\sigma}(e)x = \mu x\}.
\end{equation}
Note that it is unknown if an idempotent of a Kantor pair will always produce a Peirce decomposition under our general assumptions on $\chr\FF$.

\begin{df}
Let $L$ be a finite-dimensional semisimple Lie algebra over an algebraically closed field $\FF$ of characteristic $0$, with Killing form $\kappa$. Let $H$ be a fixed Cartan subalgebra of $L$. We have the well-known {\em root space decomposition}:
\[
L=H \oplus \bigoplus_{\alpha \in \Phi} L_{\alpha},
\] 
where $L_{\alpha}=\lbrace x\in L \mid [h,x]=\alpha(h)x \mbox{ for all } h\in H\rbrace$. The elements in $\Phi \subseteq H^* \setminus \lbrace 0 \rbrace$ are called {\em roots} and $\Phi=\Phi^+ \cup \Phi^-$ where $\Phi^+$ (resp. $\Phi^-$) is the set of positive (resp. negative) roots.  
Let $\Delta$ be a system of simple roots, which generates $\Phi$. (See \cite[\S3]{EKmon}).
\end{df}

Let $\Delta= \lbrace \alpha_1 ,..., \alpha_n \rbrace$ for $n\in \NN$. We can consider a map $\omega:\Delta\rightarrow \ZZ$. Since $\Delta$ generates $\Phi$, we get a linear operator $\omega: \Phi\rightarrow \ZZ$.  By defining $L_i:= \oplus_{\omega(\alpha)=i} L_{\alpha}$ for $\alpha\in \Phi$ we get a  $\ZZ$-grading on $L$, $L= \oplus_{i} L_{i}$. It is clear that this decomposition is a grading since $[L_{\alpha}, L_{\beta}]\subseteq L_{\alpha + \beta}$ for all $\alpha,\beta \in \Phi$. 

In particular, we can choose an $\omega: \Phi\rightarrow \ZZ$ such that $\omega(\Delta) \subseteq \{0,1\}$ and that for all $\alpha \in \Phi^+$ we get $\omega(\alpha)\leq 2$, analogously for $\alpha \in \Phi^-$; then we get a $5$-grading. Recall from the Kantor construction that $\cV = (L_1, L_{-1})$ defines a Kantor pair.

For any $\alpha \in H^*$, let $t_{\alpha}\in H$ be such that $\alpha(h)=\kappa(t_{\alpha},h)$ for all $h\in H$. Since the restriction of $\kappa$ to $H$ is nondegenerate, it induces a nondegenerate symmetric bilinear form $(\;\mid\;): H^*\times H^* \rightarrow \FF$ given by $(\alpha \mid \beta)= \kappa(t_{\alpha},t_{\beta})$.

\begin{remark}
Consider $L$, $H$, $H^*$ and $\Phi$ as above. We claim that for each $\alpha \in \Phi$, the Kantor pair defined by $(L_{\alpha}, L_{-\alpha})$ is isomorphic to $\cV_\FF$ (the $1$-dimensional simple Kantor pair), or equivalently, the Kantor pair $(L_{\alpha}, L_{-\alpha})$ is spanned by an idempotent $e = (e_{\alpha}, e_{-\alpha})$.

Indeed, it is well-known that each pair of roots $\lbrace \alpha, -\alpha \rbrace$ generates a Lie algebra isomorphic to $\mathfrak{sl}_2(\FF)$; hence there exist $x_{\pm \alpha}\in L_{\pm \alpha}$ such that $[x_{\alpha},x_{-\alpha}]=h_{\alpha}$, $[h_{\alpha},x_{\alpha}]=2 x_{\alpha}$ and $[h_{\alpha},x_{-\alpha}]=-2 x_{-\alpha}$ (see \cite[\S3]{EKmon} and references therein). Therefore, it is straightforward to see that $(x_{\alpha},\dfrac{1}{2} x_{-\alpha})$ is an idempotent of $(L_{\alpha}, L_{-\alpha})$. (Note that this also follows from the fact that $\mathfrak{sl}_2(\FF)$ is the Kantor-Lie algebra associated with the $1$-dimensional Kantor pair $(\FF, \FF)$.)

More generally, given an idempotent $e = (e_{\alpha}, e_{-\alpha})$ of $(L_{\alpha}, L_{-\alpha})$, it is easy to see that any idempotent of $(L_{\alpha}, L_{-\alpha})$ has the form $(\lambda e_{\alpha},\lambda^{-1} e_{-\alpha})$ for $\lambda\in\FF^\times$. Any of these idempotents will be called an {\em idempotent associated with the root $\alpha$}. Note that given an idempotent $e$ of $(L_{\alpha}, L_{-\alpha})$, the Peirce operators $L^{\sigma}(e)$ and $R^{\sigma}(e)$ do not depend on the choice of the idempotent, and this remains true for any Kantor pair containing $(L_{\alpha}, L_{-\alpha})$.
\end{remark}

Next result shows a relation between roots and left Peirce constants on Kantor pairs.

\begin{proposition} \label{peirce_constants}
Let $L$ be a finite-dimensional semisimple Lie algebra over an algebraically closed field $\FF$ of characteristic $0$. Let $H$ be a Cartan subalgebra of $L$ and $\Phi$ its associated root system. Let $\cV$ be a nonzero Kantor pair associated with a $\ZZ$-grading compatible with the Cartan grading on $L$, that is, $\cV = (L_1, L_{-1})$ for some $\ZZ$-grading that is a coarsening of the Cartan grading. Let $e^{\alpha} := (e_{\alpha},e_{-\alpha}) $ be an idempotent of $\cV$ associated with some $\alpha\in \Phi$. Then, for any $\beta \in \Phi$ such that $L_{\pm\beta}\subseteq\cV^{\pm}$, the idempotent $e^{\alpha}$ acts multiplicatively on $L_{\pm\beta}$ with left Peirce constant
$\lambda = \dfrac{(\alpha\mid\beta) }{(\alpha\mid\alpha)} =
\dfrac{\left\lVert \beta \right\rVert}{\left\lVert \alpha \right\rVert} \cos(\alpha,\beta)$.
\end{proposition}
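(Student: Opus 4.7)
The strategy is to translate the left Peirce operator into the Cartan action on a root space, and then use the idempotent normalization to fix the constant. Since $\cV$ comes from the $\ZZ$-grading of $L$, the triple product on $\cV$ is given by $\{x^\sigma,y^{-\sigma},z^\sigma\}=[[x^\sigma,y^{-\sigma}],z^\sigma]$, so for $x_\beta\in L_\beta\subseteq\cV^+$ one has
\[
L^+(e^\alpha)\,x_\beta = [[e_\alpha,e_{-\alpha}],x_\beta] = [h,x_\beta],
\]
where I set $h:=[e_\alpha,e_{-\alpha}]$. By the Cartan decomposition, $[L_\alpha,L_{-\alpha}]\subseteq H$, so $h\in H$ acts on $L_\beta$ by the scalar $\beta(h)$. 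This alone already shows that $e^\alpha$ acts multiplicatively on $L_\beta$; it remains to identify $\beta(h)$.

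Next, I use that $[L_\alpha,L_{-\alpha}]=\FF\,t_\alpha$, so $h=c\,t_\alpha$ for some scalar $c\in\FF$. The idempotent condition $\{e_\alpha,e_{-\alpha},e_\alpha\}=e_\alpha$ translates to $\alpha(h)\,e_\alpha=e_\alpha$, hence
\[
1=\alpha(h)=c\,\alpha(t_\alpha)=c\,(\alpha\mid\alpha),
\]
which forces $h=t_\alpha/(\alpha\mid\alpha)$. Substituting back,
\[
L^+(e^\alpha)\,x_\beta=\beta(h)\,x_\beta=\frac{\beta(t_\alpha)}{(\alpha\mid\alpha)}\,x_\beta=\frac{(\alpha\mid\beta)}{(\alpha\mid\alpha)}\,x_\beta.
\]
For the negative side, the analogous computation gives $L^-(e^\alpha)\,x_{-\beta}=[[e_{-\alpha},e_\alpha],x_{-\beta}]=[-h,x_{-\beta}]=-(-\beta)(h)\,x_{-\beta}=\beta(h)\,x_{-\beta}$, producing the same eigenvalue, as expected. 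The alternative trigonometric form $(\alpha\mid\beta)/(\alpha\mid\alpha)=\|\beta\|\cos(\alpha,\beta)/\|\alpha\|$ is just the standard rewriting of the bilinear form induced by $\kappa$.

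There is essentially no genuine obstacle: once the triple product is unwound via the Kantor construction, everything reduces to the standard fact $[L_\alpha,L_{-\alpha}]=\FF t_\alpha$ plus the idempotent normalization to pin down the scalar $c$. The only point deserving care is checking that the very same constant appears when acting on $L_{-\beta}\subseteq\cV^-$, which the sign $[e_{-\alpha},e_\alpha]=-h$ together with the sign from the root $-\beta$ on $L_{-\beta}$ conspires to fix.
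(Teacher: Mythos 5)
Your proof is correct and follows essentially the same route as the paper: both arguments identify $[e_\alpha,e_{-\alpha}]$ as a multiple of $t_\alpha$, pin down the scalar via the idempotent condition $1=\kappa(e_\alpha,e_{-\alpha})(\alpha\mid\alpha)$, and then read off the eigenvalue $\beta(t_\alpha)/(\alpha\mid\alpha)=(\alpha\mid\beta)/(\alpha\mid\alpha)$ on $L_{\pm\beta}$, with the signs cancelling on the negative side exactly as you note.
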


\begin{proof}
Since $e^{\alpha}$ is an idempotent and $[x,y]=\kappa(x,y)t_{\alpha}$ for $x\in L_{\alpha}$ and $y\in L_{-\alpha}$ (\cite[\S8.3]{H78}), we have $e_{\alpha}=\lbrace e_{\alpha}, e_{-\alpha},e_{\alpha} \rbrace = [[e_{\alpha},e_{-\alpha}],e_{\alpha}]=\kappa(e_{\alpha},e_{-\alpha}) [t_{\alpha},e_{\alpha}]=\kappa(e_{\alpha},e_{-\alpha}) \alpha(t_{\alpha}) e_{\alpha}= \kappa(e_{\alpha},e_{-\alpha}) (\alpha\mid\alpha) e_{\alpha}$. Then $1= \kappa(e_{\alpha},e_{-\alpha}) (\alpha\mid\alpha) $.

Let $\beta \in \Phi$ and $x_{\beta}\in L_{\beta}$ such that $x_{\beta} \in \cV^{\sigma}$ for some $\sigma = \pm$. Then
$$
\{ e_{\alpha}, e_{-\alpha}, x_{\beta} \} = [[e_{\alpha},e_{-\alpha}],x_{\beta}]=\kappa(e_{\alpha},e_{-\alpha})[t_{\alpha}, x_{\beta}]=\dfrac{\beta(t_{\alpha})}{(\alpha\mid \alpha)} x_{\beta} =\dfrac{(\alpha\mid\beta)}{(\alpha\mid\alpha)} x_{\beta},
$$
with $\dfrac{(\alpha\mid\beta) }{(\alpha\mid\alpha)} =
\dfrac{\left\lVert \alpha \right\rVert \left\lVert \beta \right\rVert \cos(\alpha, \beta)}{ \left\lVert \alpha \right\rVert^2} =
\dfrac{\left\lVert \beta\right\rVert}{\left\lVert \alpha \right\rVert} \cos(\alpha, \beta)$.
Similarly, for $x_{-\beta} \in L_{-\beta}$, we have that
$$
\{ e_{-\alpha}, e_{\alpha}, x_{-\beta} \} = \dfrac{(-\alpha \mid -\beta)}{(-\alpha \mid -\alpha)} x_{-\beta}
= \dfrac{(\alpha\mid\beta)}{(\alpha\mid\alpha)} x_{-\beta}.
$$
\end{proof}

\section{Generalities on Kantor systems} \label{section.generalities}
\subsection{On gradings and automorphisms}
We will now prove some general results on Kantor systems. Some of these extend analogous results for Jordan systems given in \cite[\S 2]{Ara17}.

\begin{proposition} \label{StabilizerIdentity}
Let $\cA$ be a structurable $\FF$-algebra with unity $1$ and denote by $\cT$ and $\cV$ the associated Kantor triple system and Kantor pair, respectively. Then the automorphism group scheme $\AAut(\cA, \inv)$ is the stabilizer of $1$ in $\AAut(\cT)$.

In addition, suppose that either $\chr\FF\neq3$, or that $\cA$ is the algebra generated by $\cH(\cA,\inv) \cup \cS(\cA,\inv)^2$. Then $\AAut(\cA, \inv)$ is the stabilizer of $(1^+, 1^-)$ in $\AAut(\cV)$.
\end{proposition}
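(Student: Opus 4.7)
The first statement has one easy direction: $\AAut(\cA,\inv)\subseteq\Stab_1(\AAut(\cT))$ holds because the triple product is built from the algebra product and the involution, and $1$ is fixed by every $(\cA,\inv)$-automorphism. For the reverse inclusion I plan to take $\varphi\in\AAut(\cT)$ with $\varphi(1)=1$ and recover both the involution and the algebra product from the triple product. The identity $\{1,y,1\}=2\bar y-y=U_1(y)$ forces $\varphi\bar y=\overline{\varphi y}$; for $h\in\cH$ the specialization $\{h,1,z\}=hz+zh-zh=hz$ gives $\varphi(hz)=\varphi(h)\varphi(z)$; and for a skew left factor I will use the $K$-operator identity $K(x,y)z=L_{\psi(x,y)}z$ with $\psi(x,y)=x\bar y-y\bar x$, together with the relation $\varphi(K(x,y)z)=K(\varphi x,\varphi y)\varphi z$ extracted from $K(x,y)=U_{x,y}-U_{y,x}$ and preservation of the triple product. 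Specializing $y=1$ exhausts $\cS$ because $\psi(x,1)=x-\bar x$, yielding $\varphi(sz)=\varphi(s)\varphi(z)$ for all $s\in\cS$; by linearity $\varphi$ is then multiplicative.

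For the second statement, one direction is again immediate by diagonal embedding. For the converse, I take $(\alpha,\beta)\in\AAut(\cV)$ fixing $(1^+,1^-)$ and aim to prove $\alpha=\beta$, after which the first part applies. Setting $y=z=1$ in $\alpha\{x,y,z\}=\{\alpha x,\beta y,\alpha z\}$ forces $\alpha\bar x=\overline{\alpha x}$, so both $\alpha$ and $\beta$ preserve the decomposition $\cA=\cH\oplus\cS$. Setting $x=z=1$ instead yields $\alpha\circ U_1=U_1\circ\beta$, and since $U_1|_\cH=\id$ and $U_1|_\cS=-3\,\id$, this equation forces $\alpha=\beta$ on $\cH$ in every characteristic, and on $\cS$ as well whenever $\chr\FF\neq 3$, handling the first alternative.

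The hard part is characteristic $3$, where $U_1$ vanishes on $\cS$ and the previous step is vacuous there. My plan is to extract supplementary relations from the $K$-operator: the structurable identity $K^+(x,y)z=\psi(x,y)z$ combined with the automorphism relation for $K^+$ produces $\alpha(sz)=\alpha(s)\beta(z)$ and $\beta(sz)=\beta(s)\alpha(z)$ for all $s\in\cS$ and $z\in\cA$, while further specializations of the Kantor pair identity such as $\{s_1,1,s_2\}=s_1s_2+2s_2s_1$ give bilinear constraints on $\delta:=\alpha-\beta$ of the form $\alpha(s_1)\delta(s_2)=\alpha(s_2)\delta(s_1)$ (using $2=-1$). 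The agreement subspace $W=\{z:\alpha z=\beta z\}$ already contains $\cH$ and is stable under left and right multiplication by $\cH$, and the generation hypothesis $\cA=\langle\cH\cup\cS^2\rangle$ is precisely what is needed to propagate these relations through iterated products of elements of $\cH\cup\cS^2$ and force $\delta=0$ on all of $\cS$. The main obstacle is this propagation step, and the hypothesis is genuinely necessary: without it the conclusion fails in the two-dimensional binarion case, where $\cS^2\subseteq\cH$ and pairs of the form $(\alpha,\alpha^{-1})$ furnish honest non-diagonal automorphisms of $\cV$ fixing $(1^+,1^-)$.
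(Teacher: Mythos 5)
Your treatment of the first statement, and of the characteristic-$\neq 3$ case of the second, is correct and essentially the paper's argument; the only cosmetic difference is that you recover multiplicativity on skew left factors from $K(x,1)z=\psi(x,1)z=2sz$ rather than from the identity $st=-\{t,s,1\}$, which comes to the same thing. One presentational caveat: the statement concerns the affine group schemes $\AAut$, so the argument must be run on $R$-points for an arbitrary unital commutative associative $\FF$-algebra $R$, as the paper does; your identities are all scalar-free, so this is harmless, but it should be said.

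The gap is in the characteristic-$3$ case, and it sits exactly where you flag ``the main obstacle'' --- except that you have mislocated the obstacle. Once one knows that the agreement subspace $W=\{z\in\cA\mid\alpha(z)=\beta(z)\}$ contains $1$, $\cH$ and $\cS^2$, the propagation is routine: $W$ is stable under the involution (both maps commute with it), the identity $hz=\{h,1,z\}$ makes $W$ closed under left multiplication by $\cH\cap W$, and your own relations $\alpha(sz)=\alpha(s)\beta(z)$ and $\beta(sz)=\beta(s)\alpha(z)$ give, for $s\in\cS\cap W$ and $z\in W$, that $\alpha(sz)=\alpha(s)\beta(z)=\beta(s)\alpha(z)=\beta(sz)$; splitting $x=h+s$ then shows $W$ is a subalgebra, hence all of $\cA$ by the generation hypothesis. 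What you have not proved is the input $\cS^2\subseteq W$, i.e.\ $\alpha(st)=\beta(st)$ for $s,t\in\cS$, and the bilinear constraints $\alpha(s_1)\delta(s_2)=\alpha(s_2)\delta(s_1)$ you extract from $\{s_1,1,s_2\}$ do not visibly yield it: they are relations among products of elements inside $\cA$, not an identity of linear maps, and there is no obvious way to conclude $\delta=0$ from them. The missing step is an involution trick for which you already have every ingredient: since $s,t$ are skew, $st=\overline{ts}$, so $\alpha(st)=\overline{\alpha(ts)}=\overline{\alpha(t)\beta(s)}=\overline{\beta(s)}\;\overline{\alpha(t)}=\beta(s)\alpha(t)=\beta(st)$. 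With that line inserted your plan closes and coincides with the paper's proof.
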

\begin{proof}
Let $R$ denote a commutative associative unital $\FF$-algebra and consider the $R$-algebra $\cA_R = \cA \otimes R$ with the extended $R$-linear involution. It is clear that $\Aut_R(\cA_R, \inv) \leq \Aut_R(\cT_R) \leq \Aut_R(\cV_R)$. Denote $\cH = \cH(\cA_R,\inv)$ and $\cS = \cS(\cA_R,\inv)$.

First, we claim that the involution and product of $\cA_R$ are determined by $1^\sigma$ ($\sigma=\pm$) and the triple product. On the one hand, we can recover the involution using that $\bar x = 2x - \{x,1,1\}$, which determines the subspaces $\cH$ and $\cS$. Let $h\in\cH$ and $z\in\cA_R$; then, $hz = \{h,1,z\}$ and $zh = \overline{h \bar z}$. Now, take $s,t\in\cS$; we have $st = -\{t,s,1\}$. Therefore, the product is recovered too, which proves the claim.

Let $\varphi=(\varphi^+,\varphi^-) \in\Aut_R(\cV_R)$ with $\varphi^\sigma(1^\sigma)=1^\sigma$ for $\sigma=\pm$. Notice that the involution of $\cA_R$ commutes with $\varphi^\sigma$ because 
$$ \varphi^\sigma(\bar x) = \varphi^\sigma(2x - \{x,1,1\}) 
= 2\varphi^\sigma(x) - \{\varphi^\sigma(x), 1 , 1\} = \overline{\varphi^\sigma(x)},$$
and therefore the subspaces $\cH$ and $\cS$ are $\varphi^\sigma$-invariant. Note that $U_1$ has eigenspaces $\cH$ and $\cS$ with associated eigenvalues $1$ and $-3$, respectively. Take $h\in\cH$; then $\varphi^\sigma(h)\in\cH$ and $\varphi^+(h) = \varphi^+(U_1(h)) = U_1(\varphi^-(h)) = \varphi^-(h)$, from where it follows that the maps $\varphi^+$ and $\varphi^-$ coincide in $\cH$. 

Fix $s\in\cS$. In the case that $\chr\FF\neq3$ we have that $\varphi^+(s) = \varphi^+(-\frac{1}{3}U_1(s)) = -\frac{1}{3}U_1(\varphi^-(s)) = \varphi^-(s)$, thus $\varphi^+$ and $\varphi^-$ coincide in $\cS$ too, and therefore $\varphi^+=\varphi^-$.

Now, consider the case where $\cA$ is the $\FF$-algebra generated by $\cH(\cA,\inv) \cup \cS(\cA,\inv)^2$. Then $\cA_R$ is the $R$-algebra generated by $\cH \cup \cS^2$. Since we have that $\{t,s,1\} = -st$ for each $s,t\in\cS$, it follows that
$$\varphi^\sigma(st) = \varphi^\sigma(-\{t,s,1\}) = -\{\varphi^\sigma(t), \varphi^{-\sigma}(s), 1\} 
= \varphi^{-\sigma}(s)\varphi^\sigma(t)$$
and
$$\varphi^\sigma(st) = \varphi^\sigma(\overline{ts}) = \overline{\varphi^\sigma(ts)} 
= \overline{\varphi^{-\sigma}(t)\varphi^\sigma(s)} = \varphi^\sigma(s) \varphi^{-\sigma}(t).$$ 
In consequence, $\varphi^+(st) = \varphi^+(s) \varphi^-(t) = \varphi^-(st)$, that is, $\varphi^+$ and $\varphi^-$ coincide in $\cS^2$. Let $x_1,x_2,x_3\in\cA_R$ be such that $\varphi^+(x_i) = \varphi^-(x_i)$ and set $z=\{x_1,x_2,x_3\}$. Then $\varphi^+(z) = \varphi^-(z)$. Indeed, $\varphi^+(z) = \varphi^+(\{x_1,x_2,x_3\}) = \{\varphi^+(x_1),\varphi^-(x_2),\varphi^+(x_3)\} 
= \{\varphi^-(x_1),\varphi^+(x_2),\varphi^-(x_3)\} = \varphi^-(\{x_1,x_2,x_3\}) = \varphi^-(z)$.
Since the product of $\cA_R$ can be recovered from the triple product and the unity, an inductive argument shows that $\varphi^+$ and $\varphi^-$ coincide in the algebra generated by $\cH \cup \cS^2$, which is $\cA_R$. Thus we get again $\varphi^+ = \varphi^-$.

We need to prove that $\varphi\coloneqq\varphi^+\in\Aut_R(\cA_R, \inv)$. If $h\in\cH$ and $z\in\cA_R$, we have $\varphi(h)\in\cH$ and $hz = \{h,1,z\}$, and so $\varphi(hz) = \varphi(\{h,1,z\}) = \{\varphi(h),1,\varphi(z)\} = \varphi(h)\varphi(z)$. Also, since $zh=\overline{h\overline{z}}$, we get that $\varphi (zh)=\varphi (z) \varphi (h)$.
If $s,t\in\cS$, we have that $st = -\{t,s,1\}$ and again it follows easily that $\varphi(st)=\varphi(s)\varphi(t)$. Therefore $\varphi\in\Aut_R(\cA_R, \inv)$. This proves the second statement of the result. The first statement can be proven with similar arguments.
\end{proof}

The following example shows that the conditions in Proposition~\ref{StabilizerIdentity} cannot be dropped:

\begin{example}
Let $\FF$ be a field of characteristic $3$ and consider the $2$-dimensional split Hurwitz algebra $K=\FF\times\FF$ with involution $(x,y)\mapsto(y,x)$. (Note that the subalgebra generated by $\cH(K,\inv)\cup \cS(K,\inv)^2$ is $\FF1$, which does not equal $K$.) The element $s=(1,-1)\in K$ is skew-symmetric and satisfies $s^2=1$. Let $\cV = (K, K)$ denote the Kantor pair associated with $K$ and fix $\lambda\in\FF^\times$. We claim that the maps
\begin{equation}\begin{aligned}
\varphi^\sigma \colon & K \longrightarrow K, \\
1 & \longmapsto 1, \; s\longmapsto \lambda^{\sigma1} s,
\end{aligned}\end{equation}
for $\sigma\in\{+,-\}$, define an automorphism $\varphi$ of $\cV$. Indeed, this is straightforward because the triple product of $K$ is determined by
\begin{align*}
& \{1,1,1\} = -\{s,s,1\} = 1, \qquad \{1,1,s\} = -\{s,s,s\} = s, \\
& \{1,s,1\} = \{s,1,1\} = \{1,s,s\} = \{s,1,s\} = 0.
\end{align*}
However $\varphi^+\neq\varphi^-$ and $\varphi^\sigma\notin\Aut(K)$ unless $\lambda\in\{\pm 1\}$.
\end{example}

\begin{proposition} \label{finegradingproperty} 
Let $\Gamma$ be a fine grading on a Kantor pair $\cV$ and let $G$ be its universal group. Then, there is a group homomorphism $\pi \colon G \rightarrow \ZZ$ such that $\pi(g)=\sigma1$ if $\cV^\sigma_g\neq0$ for some $\sigma\in\{+,-\}$. In particular, $\supp\Gamma^+$ and $\supp\Gamma^-$ are disjoint.
\end{proposition}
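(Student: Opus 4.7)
The plan is to exhibit a canonical $\ZZ$-grading on every Kantor pair, use fineness of $\Gamma$ to force it to coarsen to this $\ZZ$-grading, and then obtain $\pi$ from the universal property. The key observation is that since the triple product satisfies $\{\cV^\sigma,\cV^{-\sigma},\cV^\sigma\}\subseteq\cV^\sigma$ and $\sigma+(-\sigma)+\sigma=\sigma$, the decomposition placing $\cV^+$ in degree $+1$ and $\cV^-$ in degree $-1$ is a $\ZZ$-grading on $\cV$; call it $\Gamma_\ZZ$.

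First I would form the common refinement $\Gamma\wedge\Gamma_\ZZ$, which is the $(G\times\ZZ)$-grading whose homogeneous component of degree $(g,\epsilon)$ is $\cV^\sigma_g$ for $\epsilon=\sigma 1$. Since $\Gamma$ is fine, this refinement cannot be proper, which forces each homogeneous component $\cV^+_g\oplus\cV^-_g$ of $\Gamma$ to lie entirely inside one of $\cV^+$ or $\cV^-$. In other words, $\cV^+_g=0$ or $\cV^-_g=0$ for every $g\in G$, giving the disjointness of $\supp\Gamma^+$ and $\supp\Gamma^-$.

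This disjointness lets me define a set map $\pi_0\colon\supp\Gamma\to\ZZ$ by $\pi_0(g)=\sigma 1$ when $\cV^\sigma_g\neq 0$. To finish I would extend $\pi_0$ to a homomorphism $\pi\colon G\to\ZZ$ via the universal property of $G=\Univ(\Gamma)$: recall that $G$ is the abelian group generated by $\supp\Gamma$ modulo the relations $g+h+k=\ell$ arising from every nonzero inclusion $\{\cV^\sigma_g,\cV^{-\sigma}_h,\cV^\sigma_k\}\subseteq\cV^\sigma_\ell$. Applying $\pi_0$ to such a relation gives $\sigma+(-\sigma)+\sigma=\sigma$ on the left and $\sigma$ on the right, so all defining relations are respected and $\pi_0$ extends uniquely to the required homomorphism $\pi$.

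I do not expect any real obstacle here: the only thing to be mildly careful about is to use the correct description of $\Univ(\Gamma)$ for pair gradings (generators and relations induced by the triple products, in complete analogy with the algebra case cited from \cite{EKmon}), and to verify that $\Gamma_\ZZ$ is a genuine Kantor-pair grading, both of which are immediate from the definitions.
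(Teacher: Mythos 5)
Your argument is correct and is exactly the standard one that the paper invokes by deferring to the Jordan-pair case in \cite[Prop.~2.2]{Ara17}: refine $\Gamma$ by the canonical $\ZZ$-grading with $\cV^\sigma$ in degree $\sigma 1$, use fineness to conclude the refinement is not proper (hence the supports are disjoint), and then obtain $\pi$ from the universal property of $\Univ(\Gamma)$. You merely spell out the details that the paper leaves to the citation, so there is nothing to object to.
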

\begin{proof}
This follows from the same arguments of the proof for the particular case of Jordan pairs, given in \cite[Prop.~2.2]{Ara17}.
\end{proof}

\begin{proposition}\label{regulargradspairs} 
Let $\cA$ be a structurable $\FF$-algebra with unity $1$, and $G$ an abelian group. In case that $\chr\FF = 3$, assume also that $Z_c(\cA)\cap\cS(\cA, \inv) = 0$, where $Z_c(\cA):= \lbrace z\in \cA \med zx=xz \mbox{ for any } x\in \cA \rbrace$ is the commutative center of $\cA$,
 and that $\cA$ is generated as an algebra by $\cH(\cA,\inv) \cup \cS(\cA,\inv)^2$. Consider the associated Kantor pair $\cV=(\cA, \cA)$.

If $\Gamma$ is a $G$-grading on $\cV$ such that $1^+$ (or $1^-$) is homogeneous, then the restriction of the shift $\Gamma^{[g]}$ to $\cA=\cV^+$, with $g=-\deg(1^+)$, defines a $G$-grading $\Gamma_\cA$ on $\cA$.
Moreover, if $G=\Univ(\Gamma)$ then the universal group $\Univ(\Gamma_\cA)$ is isomorphic to the subgroup of $\Univ(\Gamma)$ generated by  $\supp \Gamma^{[g]}$; if in addition $\Gamma$ is fine we also have that $\Univ(\Gamma) \cong \Univ(\Gamma_\cA) \times \ZZ$, and the universal degree of $\Gamma$ is equivalent to $\deg_\Gamma(x^\sigma) := (\deg_{\cA}(x), \sigma 1)$ where $\deg_{\cA}$ is the universal degree of $\Gamma_\cA$.
\end{proposition}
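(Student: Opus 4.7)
The plan is to proceed in four steps: verify that $1^-$ is homogeneous of degree $-g^+ := -\deg(1^+)$; establish the key technical fact that the shifted $\cV^+$- and $\cV^-$-degrees agree on $\cA$; deduce that the restriction $\Gamma_\cA$ is an algebra grading; and finally identify the universal groups, splitting off the $\ZZ$-factor in the fine case. For the first step, one checks directly from the triple product formula that $V_{1^+,1^-}$ acts as the identity on $\cV^+$. Writing $1^- = \sum_h 1^-_h$ as a sum of $\cV^-$-homogeneous components and comparing degrees in $\{1^+, 1^-_h, x^+\}$ forces $V_{1^+, 1^-_h}(x^+) = 0$ for every $h \neq -g^+$ and every $x^+ \in \cV^+$. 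Taking $x^+ = 1^+$ yields $U_{1^+}(1^-_h) = 2\overline{1^-_h} - 1^-_h = 0$, which already gives $1^-_h = 0$ when $\chr\FF \neq 3$. In characteristic $3$ this only implies $1^-_h \in \cS(\cA,\inv)$, after which the full identity $V_{1^+, 1^-_h}(z) = -1^-_h z - 2 z \cdot 1^-_h = 0$ simplifies to $z \cdot 1^-_h = 1^-_h \cdot z$ for every $z$, so $1^-_h \in Z_c(\cA) \cap \cS(\cA,\inv) = 0$.

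Setting $g = -g^+$, the central step is to show that every homogeneous $x \in \cA \cap \cV^+_k$ lies in $\cV^-_{k - 2g^+}$. Since $\bar x = 2x - \{x^\sigma, 1^{-\sigma}, 1^\sigma\}$, the involution is $\cV^\sigma$-homogeneous of shifted degree zero for both $\sigma = \pm$, so $\cH(\cA,\inv)$ and $\cS(\cA,\inv)$ are graded subspaces for both gradings, and it suffices to treat hermitian and skew $x$ separately. For $x = h$ hermitian I would decompose $h = \sum_l h_l$ in $\cV^-$-homogeneous components (each $h_l \in \cH(\cA,\inv)$), and use that $\{h^+, 1^-, z^+\} = hz$ lies in the single component $\cV^+_{k - g^+ + \deg(z)}$ while $\{1^+, h^-, z^+\} = \sum_l h_l z$ spreads across $\cV^+_{g^+ + l + \deg(z)}$; since both triple products coincide (as $\bar h = h$), comparison pins down $l = k - 2g^+$ and kills the rest via $h_l z = 0$ for every $z$. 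For $x = s$ skew, the analogous identity $\{1^+, s^-, z^+\} = -\{s^+, 1^-, z^+\}$ (from $\bar s = -s$) plays the same role; the resulting equation $s_l z + 2 z s_l = 0$ for every $z$ yields $3 s_l = 0$ at $z = 1$ and hence $s_l = 0$ when $\chr\FF \neq 3$, while in characteristic $3$ it reduces to $s_l \in Z_c(\cA) \cap \cS(\cA,\inv) = 0$. Once this matching is in hand, the fact that $\Gamma_\cA$ is an algebra grading follows by writing $x = x_H + x_S$, $y = y_H + y_S$ (all four summands homogeneous of the same shifted degree) and applying the identities $hz = \{h^+, 1^-, z^+\}$, $zh = \overline{h \bar z}$, and $st = -\{t^+, s^-, 1^+\}$ recalled in the proof of Proposition~\ref{StabilizerIdentity} to each bilinear piece.

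For the universal-group statement, let $H := \langle \supp \Gamma^{[g]}\rangle \leq G = \Univ(\Gamma)$; by the degree matching, $H = \langle \supp \Gamma_\cA \rangle$, and the canonical $H$-realization of $\Gamma_\cA$ yields by universal property a homomorphism $\Univ(\Gamma_\cA) \to H$. An inverse is produced by observing that every defining relation of $H$ in $G$ arises from a non-zero triple product $\{x,y,z\}$, which via the expansion $\{x,y,z\} = (x\bar y)z + (z\bar y)x - (z\bar x)y$ decomposes into relations coming from the algebra product on $\cA$; conversely, algebra relations $ab = c$ yield triple-product relations through the identities $hz = \{h,1,z\}$ and $st = -\{t,s,1\}$. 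Finally, if $\Gamma$ is fine, Proposition~\ref{finegradingproperty} furnishes $\pi \colon G \to \ZZ$ with $\pi(\deg(x^\sigma)) = \sigma$, so $\pi(g^+) = 1$ and $H \subseteq \ker \pi$; since $\supp \Gamma^\sigma = \supp \Gamma^{[g],\sigma} + \sigma g^+$ generates $G$, one has $G = H + \langle g^+\rangle$, and $H \cap \langle g^+\rangle \subseteq \ker\pi \cap \langle g^+\rangle = 0$, giving $G \cong H \times \ZZ \cong \Univ(\Gamma_\cA) \times \ZZ$ and the expected form $\deg_\Gamma(x^\sigma) = (\deg_\cA(x), \sigma 1)$ for the universal degree.

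The principal obstacle is the characteristic-$3$ portion of the degree-matching step: without $Z_c(\cA) \cap \cS(\cA,\inv) = 0$ the operator $U_{1^+}$ vanishes on the skew part and offers no control over the spurious $\cV^-$-components of a skew element, while the generating hypothesis $\cA = \langle \cH(\cA,\inv) \cup \cS(\cA,\inv)^2\rangle$ ensures (as in Proposition~\ref{StabilizerIdentity}) that the algebra product on $\cA$ is recoverable from the triple product and the unit, which is precisely what is needed to transfer relations cleanly between $\Univ(\Gamma_\cA)$ and the subgroup $H$.
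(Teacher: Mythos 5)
Your first three steps are correct and take a genuinely different route from the paper. Where the paper, after showing that $1^-$ is homogeneous, invokes Proposition~\ref{StabilizerIdentity} together with the correspondence between gradings and morphisms of affine group schemes to conclude at once that $\Gamma^{[g]}$ is produced by a morphism into $\AAut(\cA,\inv)$ (so that in particular the $\cV^+$- and $\cV^-$-components match on $\cA$), you prove the degree-matching by hand: decomposing hermitian and skew elements into $\cV^-$-homogeneous pieces and comparing $\{x^+,1^-,z^+\}$ with $\{1^+,x^-,z^+\}$. This is more elementary, and it makes visible exactly where the two characteristic-$3$ hypotheses enter (the hypothesis $Z_c(\cA)\cap\cS(\cA,\inv)=0$ kills the spurious skew components, and the generation hypothesis is what lets you rebuild the product of $\cA$ from the triple product); the scheme-theoretic argument is shorter but hides the same computations inside Proposition~\ref{StabilizerIdentity}. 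Your derivation of the splitting in the fine case, using the homomorphism $\pi$ from Proposition~\ref{finegradingproperty} to see that $H\subseteq\ker\pi$ while $\pi(g^+)=1$, is also cleaner than the paper's construction, which passes through an auxiliary shift into $H\times\langle g_0\rangle$.

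The one step that does not hold up as written is the identification $\Univ(\Gamma_\cA)\cong H:=\langle\supp\Gamma^{[g]}\rangle$. You assert that ``every defining relation of $H$ in $G$ arises from a non-zero triple product'' and then decompose such relations into algebra-product relations. But $H$ is merely a subgroup of $G=\Univ(\Gamma)$: the relations satisfied by its generators are arbitrary $\ZZ$-linear consequences of the defining relations of $G$, and those defining relations are written in terms of the \emph{unshifted} support together with $g^+=\deg(1^+)$; there is no a priori reason that a relation among elements of $\supp\Gamma^{[g]}$ which happens to hold in $G$ is derivable from individual nonzero triple products, let alone from nonzero products in $\cA$. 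What is actually needed is a homomorphism $H\to\Univ(\Gamma_\cA)$ restricting to the identity on the support, to invert the canonical surjection $\Univ(\Gamma_\cA)\to H$. The paper obtains it by observing that the $\Univ(\Gamma_\cA)$-grading $(\Gamma_\cA,\Gamma_\cA)$ on $\cV$ is a coarsening of $\Gamma$, so the universal property of $\Univ(\Gamma)$ yields an epimorphism $\varphi\colon\Univ(\Gamma)\to\Univ(\Gamma_\cA)$ with $g\in\ker\varphi$, whose restriction $\varphi_2$ to $H$ is the identity on $\supp\Gamma^{[g]}$; composing $\varphi_2$ with the canonical map $\Univ(\Gamma_\cA)\to H$ in either order gives the identity on generators, hence both are isomorphisms. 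You should replace the relation-chasing by this (or an equivalent) universal-property argument; in the fine case it can also be done directly, since Proposition~\ref{finegradingproperty} guarantees $\supp\Gamma^+\cap\supp\Gamma^-=\emptyset$, but the intermediate claim of the proposition is stated without assuming fineness.
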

\begin{proof}
Denote $\cH := \cH(\cA, \inv)$ and $\cS := \cS(\cA, \inv)$. Let $\Gamma$ be a $G$-grading on $\cV$ where $1^+$ is homogeneous. We claim that $1^-$ is homogeneous.

First, consider the case where $\chr\FF \neq 3$. Then $U_1^+(h^-)=h^+$ and $U_1^+(s^-)=-3s^+$ for $h\in\cH$, $s\in\cS$.
Since the homogeneous map $U_1^+$ is invertible, there exists a homogeneous element $y^-\in\cA^-$ such that $U_1^+(y^-)=1^+$, and since this is only possible for $y^- = 1^-$, we get that $1^-$ is homogeneous, which proves the claim in this case.

Consider now the case with $\chr\FF = 3$. Note that $V_{1,s}(x) = [x,s]$ for any $x\in\cA$, $s\in\cS$; since $Z_c(\cA)\cap\cS(\cA, \inv) = 0$, it follows that $V_{1,s} = 0$ if and only if $s = 0$; in other words, the linear map $\cS \to \End(\cV^+)$, $s \mapsto V_{1,s}$ is injective. The subspaces $\im U_1^+ = \cH^+$ and $\ker U_1^+ = \cS^-$ are graded. Since $1^+ \in\im U_1^+$, there exists a homogeneous element $y^-$ such that $U_1^+(y^-) = 1^+$, and we have that $\deg(1^+) + \deg(y^-) = 0$. It is clear that $y^- - 1^- \in \ker U_1^+ = \cS^-$, so that $y^- = 1^- + s^-$ for some $s\in\cS$. If $V_{1,s} = 0$, then $s = 0$ and $1^-$ is homogeneous. Assume now that $V_{1,s} \neq 0$. Then $0 \neq V_{1,s} = V_{1,y} - V_{1,1} = V_{1,y} - \id$ is a homogeneous map of degree 0. Since $\cS^-$ is graded, we can write $s = \sum_i s_i$ where $s_i^-$ are homogeneous and have different degrees in $\cS^-$. Consequently, we have that $V_{1,s} = \sum_i V_{1,s_i}$ with $V_{1,s_i} \neq 0$ for each $i$, and since $V_{1,s}$ has degree $0$, it follows that $s = s_i$ for some $i$, that is, $s$ is homogeneous in $\cV^-$. Moreover, $\deg(s^-) = -\deg(1^+) = \deg(y^-)$, and $1^-$ is homogeneous.

We have proven that $1^-$ is homogeneous in every case. If $\deg_g$ is the degree map of the grading $\Gamma^{[g]}$ on $\cV$, we have that $\deg_g(1^+) = 0 = \deg_g(1^-)$ because $U_1^+(1^-) = 1^+$. By Proposition~\ref{StabilizerIdentity}, the stabilizer of $(1^+, 1^-)$ in $\AAut(\cV)$ is $\AAut(\cA, \inv)$. Note that the ``automorphisms'' in $\AAut(\cV)$ producing the grading must be in the stabilizer of $(1^+, 1^-)$, which is $\AAut(\cA, \inv)$. Therefore, using the correspondence between gradings and morphisms of affine group schemes, it follows that the grading $\Gamma^{[g]}$ on $\cV$ is produced by a morphism $\Hom_{\text{Alg}_{\FF}}(\FF G, -) \to \AAut(\cA, \inv)$, which also defines a grading $\Gamma_{\cA}$ on $\cA$. It also follows that the homogeneous components of $\Gamma^{[g]}$ coincide in $\cV^+$ and $\cV^-$.

\smallskip

Assume now that $G = \Univ(\Gamma)$ and call $H = \langle \supp \Gamma^{[g]} \rangle$. Note that $\Gamma^{[g]}$ can be regarded as a $\Univ(\Gamma)$-grading and also as an $H$-grading; similarly $\Gamma_\cA$ can be regarded as a $\Univ(\Gamma_\cA)$-grading and as an $H$-grading. By the universal property of the universal group, the $H$-grading $\Gamma_\cA$ is induced from the $\Univ(\Gamma_\cA)$-grading $\Gamma_\cA$ by an homomorphism $\varphi_1 \colon \Univ(\Gamma_\cA) \to H$ that restricts to the identity in the support. On the other hand, the $\Univ(\Gamma_\cA)$-grading $\Gamma_\cA$ induces a $\Univ(\Gamma_\cA)$-grading $(\Gamma_\cA, \Gamma_\cA)$ on $\cV$ that is a coarsening of $\Gamma$, and therefore $(\Gamma_\cA,\Gamma_\cA)$ is induced from $\Gamma$ by some epimorphism $\varphi \colon \Univ(\Gamma) \to \Univ(\Gamma_\cA)$. Let $\varphi_2 \colon H \to \Univ(\Gamma_\cA)$ be the restriction of $\varphi$ to $H$. Note that $g\in\ker\varphi$, which implies that the $\Univ(\Gamma_\cA)$-grading $(\Gamma_\cA, \Gamma_\cA)$ is induced from the $H$-grading $\Gamma^{[g]}$ by $\varphi_2$, and also that $\varphi_2$ is an epimorphism which is the identity in the support. Since each epimorphism $\varphi_i$ is the identity in the support, both compositions $\varphi_1\varphi_2$ and $\varphi_2\varphi_1$ must be the identity and therefore $\Univ(\Gamma_\cA) \cong H$.

Suppose now that $\Gamma$ is fine and denote by $\Gamma_H$ the grading $\Gamma^{[g]}$ regarded as an $H$-grading. Note that $\Univ(\Gamma) = \langle \supp\Gamma^{[g]}, g \rangle = \langle \supp\Gamma_H, g \rangle = \langle H, g \rangle$. Consider $H$ as a subgroup of $H \times \ZZ \cong H \times \langle g_0 \rangle$, where the element $g_0$ has infinite order. The $H$-grading $\Gamma_H$ can be regarded as an $H\times \langle g_0 \rangle$-grading, and the shift $(\Gamma_H)^{[g_0]}$ defines another $H\times \langle g_0 \rangle$-grading where $\deg(1^+) = g_0$. Since the $H\times \langle g_0 \rangle$-grading $(\Gamma_H)^{[g_0]}$ is a coarsening of the $\Univ(\Gamma)$-grading $\Gamma$ (because $\Gamma$ is fine and by Proposition~\ref{finegradingproperty}), by the universal property there is an epimorphism $\Univ(\Gamma) = \langle H, g \rangle \to H\times \langle g_0 \rangle$ that sends $-g \mapsto g_0$ and fixes the elements of $H$. Consequently, $H \cap \langle g \rangle = 0$, $\langle g \rangle \cong \ZZ$, and we can conclude that $\Univ(\Gamma) = \langle H, g \rangle \cong H \times \ZZ \cong \Univ(\Gamma_\cA) \times \ZZ$. The statement about the universal degree follows too.
\end{proof}

\begin{proposition}\label{regulargradstriples} 
Let $\cA$ be a structurable $\FF$-algebra with unity $1$, and $\cT$ its associated Kantor triple system. If $\Gamma$ is a $G$-grading on $\cT$ such that $1$ is homogeneous, then the shift $\Gamma^{[g]}$ with $g=\deg(1)$ induces a $G$-grading $\Gamma_\cA$ on $\cA$. Moreover, if $G=\Univ(\Gamma)$ then $\deg(1)$ has order $2$ and $\Univ(\Gamma_\cA)$ is isomorphic to the subgroup of $\Univ(\Gamma)$ generated by $\supp \Gamma^{[g]}$; and if in addition $\Gamma$ is fine we also have that  $\Univ(\Gamma) \cong \Univ(\Gamma_\cA) \times \ZZ_2$, and the universal degree of $\Gamma$ is equivalent to $\deg_\Gamma(x) := (\deg_{\cA}(x), \bar 1)$ where $\deg_{\cA}$ is the universal degree of $\Gamma_\cA$.
\end{proposition}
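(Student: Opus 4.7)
The plan is to mirror the argument of Proposition~\ref{regulargradspairs}, with the infinite-order shift used there replaced throughout by an order-$2$ shift, so that the final $\ZZ$ factor correspondingly becomes a $\ZZ_2$ factor. First, the identity $\{1,1,1\}=1$, valid in any structurable algebra with unity, forces $3\deg(1)=\deg(1)$, so $g:=\deg(1)$ satisfies $2g=0$; thus the shift $\Gamma^{[g]}$ is well-defined and places $1$ in the degree-$0$ component. By the correspondence between abelian group gradings and morphisms of affine group schemes, together with Proposition~\ref{StabilizerIdentity}, the quasitorus realizing $\Gamma^{[g]}$ fixes $1$ and therefore factors through $\AAut(\cA,\inv)$, producing the $G$-grading $\Gamma_\cA$ on $\cA$.

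Assume now $G=\Univ(\Gamma)$. To show $g$ has order exactly $2$, I will construct an auxiliary $\tilde G:=G\times\ZZ_2$-realization $\tilde\Gamma$ of the same decomposition as $\Gamma$, defined by $\tilde\deg(x):=(\deg_\Gamma(x),\bar 1)$; the triple grading axiom survives since $\bar 1+\bar 1+\bar 1=\bar 1$ in $\ZZ_2$. The universal property of $\Univ(\Gamma)$ then produces a homomorphism $G\to\tilde G$ sending $g$ to $(g,\bar 1)\neq 0$, forcing $g\neq 0$.

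For the identification $\Univ(\Gamma_\cA)\cong H:=\langle\supp\Gamma^{[g]}\rangle$, I will follow the pair-case strategy and exhibit mutually inverse maps: $\varphi_1\colon \Univ(\Gamma_\cA)\to H$ from the universal property of $\Univ(\Gamma_\cA)$ applied to the $H$-realization, and its inverse obtained by embedding $\Univ(\Gamma_\cA)\hookrightarrow\Univ(\Gamma_\cA)\times\ZZ_2$ as the first factor and then shifting by $g_0:=(0,\bar 1)$ to produce a $\Univ(\Gamma_\cA)\times\ZZ_2$-realization of $\Gamma$; the universal property of $\Univ(\Gamma)$ then yields a homomorphism $\phi\colon G\to\Univ(\Gamma_\cA)\times\ZZ_2$ with $\phi(g)=g_0$ whose restriction to $H$ inverts $\varphi_1$ on supports. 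Assuming moreover that $\Gamma$ is fine, the decisive observation is that $\phi(g)=g_0\notin\phi(H)\subseteq\Univ(\Gamma_\cA)\times\{\bar 0\}$, forcing $g\notin H$; together with $G=\langle H,g\rangle$ (from $\supp\Gamma\subseteq H-g$) and $H\cap\langle g\rangle=\{0\}$, this yields $G=H\oplus\langle g\rangle\cong\Univ(\Gamma_\cA)\times\ZZ_2$, and the stated universal-degree formula is read off directly from $\phi$.

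The main obstacle will be the careful verification that each auxiliary grading constructed genuinely realizes the same vector-space decomposition as $\Gamma$, so that the universal property of $\Univ(\Gamma)$ actually delivers the required homomorphism, together with the bookkeeping of degrees across the successive embeddings and shifts. A subtlety worth stressing is that only order-$2$ shifts are permitted on Kantor triple systems (in contrast with the infinite-order shifts used for pairs), which is precisely what converts the $\ZZ$ factor of Proposition~\ref{regulargradspairs} into a $\ZZ_2$ factor here.
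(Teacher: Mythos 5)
Your proposal is correct and follows essentially the same route as the paper, which likewise obtains $2\deg(1)=0$ from $\{1,1,1\}=1$, descends to $\cA$ via Proposition~\ref{StabilizerIdentity} and the group-scheme correspondence, and then repeats the final arguments of Proposition~\ref{regulargradspairs} with the infinite-order shift element replaced by one of order $2$. The only (harmless) variations are that you invoke the triple-system half of Proposition~\ref{StabilizerIdentity} directly instead of passing through the pair $(\Gamma,\Gamma)$, and you certify $\deg(1)\neq 0$ via an auxiliary $G\times\ZZ_2$-realization rather than the paper's coarsening to the trivial $\ZZ_2$-grading $\cT=\cT_{\bar 1}$; both devices are equivalent applications of the universal property.
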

\begin{proof} 
Let $\Gamma$ be a $G$-grading on $\cT$ such that $1$ is homogeneous. Then $\Gamma_\cV := (\Gamma, \Gamma)$ is a $G$-grading on the Kantor pair $\cV := \cV_\cA$ such that $1^+$ and $1^-$ are homogeneous. By Proposition~\ref{StabilizerIdentity}, and using the same argument from the proof of Proposition~\ref{regulargradspairs}, it follows that the restriction of the shift $\Gamma_\cV^{[g]}$ to $\cA=\cV^+$, with $g=-\deg(1^+)$, defines a $G$-grading $\Gamma_\cA$ on $\cA$. Since $U_1(1) = 1$, we get that $2g = 0$, so that $\Gamma^{[g]}$ defines a grading on $\cT$, and also $\Gamma_\cV^{[g]} = (\Gamma^{[g]}, \Gamma^{[g]})$. Thus $\Gamma^{[g]}$ defines the same $G$-grading $\Gamma_\cA$ on $\cA$, which proves the first claim.

Assume now that we also have $G=\Univ(\Gamma)$. We have shown above that $2\deg(1) = 0$. The $\ZZ_2$-grading $\Gamma_{\ZZ_2}$ given by $\cT = \cT_{\bar 1}$ is a coarsening of $\Gamma$, which implies by the universal property that there is a group epimorphism $\varphi \colon G \to \ZZ_2$ such that $\Gamma_{\ZZ_2}$ is induced by $\varphi$ from $\Gamma$. Consequently, $\deg(1)$ has order $2$. Set $H = \langle \supp\Gamma^{[g]}\rangle$. The rest of the proof follows with the same arguments from the last part of the proof of Proposition~\ref{regulargradspairs}, but using $\cT$ instead of $\cV_\cA$.
\end{proof}

\section{Automorphisms and orbits} \label{section.automorphisms.and.orbits}

In this section, the automorphism groups and their orbits are studied for Kantor systems of Hurwitz type.

\begin{notation} \label{notation.second.triple}
Let $\cV_C$ and $\cT_C$ be the Kantor pair and Kantor triple system, respectively, associated with a Hurwitz algebra $C$. For any $\lambda\in\FF^\times$, consider the pair of maps $c_\lambda := (c_\lambda^+, c_\lambda^-)$ defined by
\begin{equation}
c_\lambda^+(x) \coloneqq \lambda x, \qquad c_\lambda^-(y) \coloneqq \lambda^{-1} y,
\end{equation}
for any $x\in\cV_C^+$, $y\in\cV_C^-$. It is easy to see that $c_\lambda$ is an automorphism of $\cV_C$. Note that the $1$-torus $\langle c_\lambda \med \lambda\in\FF^\times\rangle$ produces the $\ZZ$-grading that is associated with the Kantor construction. Also, it is clear that $c_\lambda$ is an automorphism of $\cT_C$ if and only if $\lambda = \pm 1$.

We will denote by $L_a$ and $R_a$, respectively, the left and right multiplications by $a\in C$. We will also consider the triple system $\cT'_C \coloneqq C$ with the triple product given by $\{x,y,z\}' \coloneqq (x\bar y)z$.
\end{notation}

\begin{remark} \label{remark.orthogonal}
Let $V$ be a finite-dimensional vector space and $q \colon V\rightarrow \FF$ a nondegenerate quadratic form, and denote by $\Ort'(V,q)$ the reduced orthogonal group. Recall from \cite[4.8]{J89}) that $\Ort'(V,q) \trianglelefteq \Ort^+(V,q)$ and $\Ort^+(V,q)/\Ort'(V,q) \cong \FF^\times/(\FF^\times)^2$, where $(\FF^\times)^2$ is the multiplicative group of squares of $\FF^\times$. Since we assume that the base field is algebraically closed, we have that $\Ort'(V,q) = \Ort^+(V,q)$.

On the other hand, recall from \cite[Section 1]{Eld00} that if $\cC$ is a Cayley algebra with norm $n$, then
$\Ort'(\cC, n) = \langle L_a \med a\in \cC, n(a)=1 \rangle = \langle R_a \med a\in \cC, n(a)=1 \rangle$.
\end{remark}

\begin{lemma} \label{lemma.automorphisms}
Let $C$ be a Hurwitz algebra and $x\in C$ a traceless element with $n(x)=1$. Then $L_x \in\Aut(\cT_C)\cap\Aut(\cT'_C)$.
\end{lemma}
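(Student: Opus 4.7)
The plan is to reduce both claims to a single algebraic identity and then verify it using the Moufang identities in the alternative algebra $C$.

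First, from $t(x)=0$ and $n(x)=1$, the quadratic equation \eqref{eq_quadratic_Hurwitz} gives $x^2=-1$ and $\bar{x}=t(x)\cdot 1-x=-x$. I would then observe that both triple products are linear combinations of expressions of the form $(\alpha\bar{\beta})\gamma$ with $\alpha,\beta,\gamma\in\{a,b,c\}$, namely
\[ \{a,b,c\}'=(a\bar b)c, \qquad \{a,b,c\}=(a\bar b)c+(c\bar b)a-(c\bar a)b. \]
Hence it is enough to establish the key identity
\[ \bigl((xa)\,\overline{xb}\,\bigr)(xc) \;=\; x\bigl((a\bar b)c\bigr) \qquad \text{for all } a,b,c\in C, \]
since applying it termwise to $(\alpha,\beta,\gamma)=(a,b,c),(c,b,a),(c,a,b)$ yields simultaneously $L_x\{a,b,c\}=\{L_x a, L_x b, L_x c\}$ and the analogous equality for $\{\cdot,\cdot,\cdot\}'$.

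To prove the key identity, I would use $\bar x=-x$ to rewrite $\overline{xb}=\bar b\bar x=-\bar b x$, so the left-hand side equals $-((xa)(\bar b x))(xc)$. The middle Moufang identity $(uy)(zu)=u(yz)u$ transforms the inner factor into $(xa)(\bar b x)=x(a\bar b)x$. The left Moufang identity $(uvu)w=u\bigl(v(uw)\bigr)$, applied with $v=a\bar b$ and $w=xc$, then gives
\[ \bigl(x(a\bar b)x\bigr)(xc) \;=\; x\bigl((a\bar b)(x\cdot xc)\bigr). \]
Left alternativity together with $x^2=-1$ yields $x\cdot xc=x^2 c=-c$, and combining the two sign changes proves the identity.

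Bijectivity of $L_x$ is automatic, since $L_x\circ L_x = L_{x^2}=-\id$, so $L_x^{-1}=-L_x$. The only delicate point will be keeping track of the signs and orderings when invoking the three Moufang laws, which come in several equivalent forms; no deeper obstacle is expected, because all the identities used are standard in any Hurwitz algebra.
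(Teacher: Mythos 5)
Your proof is correct and follows essentially the same route as the paper: both reduce the statement to the ternary identity $((xa)(\overline{xb}))(xc)=n(x)\,x((a\bar b)c)$ and then apply it termwise to the two triple products. The only difference is that the paper cites this identity from Elduque's work on ternary composition algebras, whereas you derive it directly from the middle and left Moufang identities together with $\bar x=-x$ and $x^2=-1$; your sign bookkeeping and the bijectivity argument $L_x^2=L_{x^2}=-\id$ are both correct.
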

\begin{proof}
Recall from \cite[Section 3]{Eld96} that if $\cC$ is a Cayley algebra, we have that 
\begin{equation} \label{ternary.identity}
((xa)(\overline{xb}))(xc) = n(x)x((a\bar b)c)
\end{equation}
for all $x,a,b,c\in\cC$ where $x$ is traceless. Since any Hurwitz algebra is contained in a Cayley algebra, \eqref{ternary.identity} holds for any Hurwitz algebra, and consequently it is easy to see that $L_x$ behaves well with the triple products of $\cT_C$ and $\cT'_C$. (Note that $L_x^{-1} = L_{\bar x} = -L_x$.) 
\end{proof}

\bigskip

\begin{theorem}\label{th:schemestriple}
Let $C$ be a Hurwitz algebra, $\cT_C$ its associated Kantor triple system, and $\cT'_C$ the triple system defined in \ref{notation.second.triple}. Then $\AAut(\cT_C) = \AAut(\cT'_C) \leq \OO(C, n)$.
\end{theorem}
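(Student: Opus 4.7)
The plan is to work functorially at the level of $R$-points of the affine group schemes involved and to exhibit closed-form trilinear identities that translate preservation of one triple product into preservation of the other, together with preservation of the norm.

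First, I would establish the common containment in $\OO(C,n)$. A direct expansion gives
\begin{equation*}
\{x,x,x\} = n(x)x \quad \text{and} \quad \{x,x,x\}' = (x\bar x)x = n(x)x,
\end{equation*}
the first being a specialization of \eqref{eq_Ux_Hurwitz}, the second the defining property of the norm of a Hurwitz algebra. For $\varphi$ in either automorphism group scheme over a commutative $\FF$-algebra $R$, applying this to $\varphi(x)$ and comparing with $\varphi(\{x,x,x\})=n(x)\varphi(x)$ yields
\begin{equation*}
\bigl[n(\varphi(x)) - n(x)\bigr]\varphi(x) = 0 \quad \text{in } C\otimes R,
\end{equation*}
for every $x\in C\otimes R$. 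A standard functorial argument (reducing to geometric points, where $\varphi(x)\ne 0$ whenever $x\ne 0$, and treating the polynomial identity in generic coordinates) then forces $n\circ\varphi=n$; thus both $\AAut(\cT_C)$ and $\AAut(\cT'_C)$ are contained in $\OO(C,n)$.

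For the inclusion $\AAut(\cT'_C)\leq\AAut(\cT_C)$, I would simply rewrite the Kantor triple product as an $\FF$-linear combination of $\cT'_C$-products:
\begin{equation*}
\{x,y,z\} \;=\; (x\bar y)z + (z\bar y)x - (z\bar x)y \;=\; \{x,y,z\}' + \{z,y,x\}' - \{z,x,y\}'.
\end{equation*}
Any $R$-point of $\AAut(\cT'_C)$ preserves each summand on the right, hence preserves $\{\cdot,\cdot,\cdot\}$.

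For the reverse inclusion, I would combine two trilinear identities. The Hurwitz polarization $x\bar y + y\bar x = n(x,y)\cdot 1$ gives
\begin{equation*}
\{x,y,z\}' + \{y,x,z\}' \;=\; (x\bar y + y\bar x)z \;=\; n(x,y)z,
\end{equation*}
while a direct expansion produces
\begin{equation*}
\{x,z,y\} - \{y,z,x\} \;=\; K(x,y)z \;=\; (x\bar y - y\bar x)z \;=\; \{x,y,z\}' - \{y,x,z\}'.
\end{equation*}
Adding and dividing by $2$ (legitimate since $\chr\FF\ne 2$) yields
\begin{equation*}
\{x,y,z\}' \;=\; \tfrac{1}{2}\bigl(n(x,y)z + \{x,z,y\} - \{y,z,x\}\bigr),
\end{equation*}
which expresses $\{\cdot,\cdot,\cdot\}'$ purely in terms of $\{\cdot,\cdot,\cdot\}$ and the polar form. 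Any $\varphi\in\AAut(\cT_C)(R)$ already lies in $\OO(C,n)(R)$ by the first step, so it preserves both $\{\cdot,\cdot,\cdot\}$ and $n(\cdot,\cdot)$, and therefore preserves $\{\cdot,\cdot,\cdot\}'$ as well. The only mildly subtle point in the whole argument is the scheme-theoretic bookkeeping in Step~1, passing from $[n(\varphi(x))-n(x)]\varphi(x)=0$ to $n\circ\varphi = n$; the remaining content is an exercise in trilinear manipulations within a Hurwitz algebra.
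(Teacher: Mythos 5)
Your proposal is correct and follows essentially the same route as the paper: both deduce $\AAut(\cT_C),\AAut(\cT'_C)\leq\OO(C,n)$ from $\{x,x,x\}=n(x)x=\{x,x,x\}'$, obtain $\AAut(\cT'_C)\leq\AAut(\cT_C)$ from the defining expansion of the Kantor product, and recover $\{\cdot,\cdot,\cdot\}'$ from $\{\cdot,\cdot,\cdot\}$ and the polar form via a linearization identity (yours uses $\{x,z,y\}-\{y,z,x\}$, the paper's uses $\{x,y,z\}-\{z,y,x\}$, which differ only by a permutation of the arguments). You are in fact slightly more careful than the paper about the passage from $\bigl[n(\varphi(x))-n(x)\bigr]\varphi(x)=0$ to $n\circ\varphi=n$ over an arbitrary $R$, where the generic-coordinates argument (rather than reduction to geometric points) is the one that actually settles the scheme-theoretic statement.
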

\begin{proof}
Denote $\cT = \cT_C$ and $\cT' = \cT'_C$. For any associative commutative unital $\FF$-algebra $R$, we will consider $\cT_R \coloneqq \cT \otimes R$ and $\cT'_R \coloneqq \cT' \otimes R$, with their $R$-linear triple products, and the $R$-algebra $C_R \coloneqq C\otimes R$. Let $f \in \Aut_R(\cT_R)$. Note that $\{x,x,x\}=n(x)x$ for any $x\in\cT_R$. Hence, $f(\{x,x,x\})=n(x)f(x)$ and $f(\{x,x,x\})=\{f(x),f(x),f(x)\}=n(f(x))f(x)$, so that $n(f(x)) = n(x)$ and $f\in O(C_R, n)$. It is clear that $\AAut(\cT') \leq \AAut(\cT)$. Finally, since we have that $\{x,y,z\} - \{z,y,x\} = -(z \bar x)y + (x \bar z)y = n(x,z)y -2(z \bar x)y = n(x,z)y - 2\{z,x,y\}'$ with $\AAut(\cT) \leq \OO(C, n)$, we get also that $\AAut(\cT) \leq \AAut(\cT')$.
\end{proof} 

\begin{corollary}\label{co:aut_triple}
Let $C$ be a Hurwitz algebra with norm $n$.\\
1) If $\dim C = 1$, then $\Aut(\cT_C) = O(C, n) \cong \ZZ_2$. \\
2) If $\dim C = 2$, then $\Aut(\cT_C) = O(C, n) \cong \FF^\times \rtimes \ZZ_2$. \\
3) If $\dim C = 4$, then $\Aut(\cT_C)\cong O^+(C, n)$. \\
4) If $\dim C = 8$, then $\Aut(\cT_C)\cong \Spin(\cS, n)$, where $\cS$ is the skew-symmetric subspace of $C$.
\end{corollary}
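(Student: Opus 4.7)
The proof naturally splits according to $\dim C$, with the octonion case being by far the main obstacle. The common starting point is Theorem~\ref{th:schemestriple}, which gives $\Aut(\cT_C)\leq O(C,n)$ in every case; it remains to identify this subgroup in each dimension.

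The cases $\dim C = 1, 2$ would be handled by direct computation. For $\dim C = 1$, the involution is trivial, $\{x,y,z\}=xyz$, and $O(C,n) = \{\pm\id\}$, which both preserve the odd-degree triple product. For $\dim C = 2$, the algebra $C\cong\FF\times\FF$ is commutative and associative; expanding $\{x,y,z\}$ in components, one checks by a short calculation that both the torus $(x_1,x_2)\mapsto(\lambda x_1,\lambda^{-1} x_2)$ and the swap $(x_1,x_2)\mapsto(x_2,x_1)$ (the involution) preserve the triple product, realizing the full $O(C,n)\cong \FF^\times \rtimes \ZZ_2$.

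For $\dim C = 4$ my plan is to sandwich $\Aut(\cT_C)$ between $O^+(C,n)$ and $O(C,n)$. For $p,q\in C$ with $n(p)=n(q)=1$, introduce $f_{p,q}(x):=p x \bar q$. Using associativity of the quaternion product and the identities $\overline{ab}=\bar b\bar a$, $\bar p p=\bar q q=1$, one computes $f_{p,q}(x)\overline{f_{p,q}(y)}=p(x\bar y)\bar p$, whence $(f_{p,q}(x)\overline{f_{p,q}(y)})f_{p,q}(z)=p(x\bar y)z\bar q=f_{p,q}((x\bar y)z)$; the remaining two terms of $\{x,y,z\}$ transform analogously, so $f_{p,q}\in\Aut(\cT_C)$. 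The classical map $(p,q)\mapsto f_{p,q}$ realizes the double cover $\Spin(3)\times\Spin(3)\twoheadrightarrow O^+(C,n)$, yielding $O^+(C,n)\leq \Aut(\cT_C)$. For the strict inequality $\Aut(\cT_C)\subsetneq O(C,n)$, I would check that the Hurwitz involution $x\mapsto\bar x$ (which has determinant $-1$ since it negates the $3$-dimensional traceless part) fails to preserve $\{x,y,z\}$: a direct expansion shows that $\overline{\{x,y,z\}}-\{\bar x,\bar y,\bar z\}= -\bar y x\bar z+\bar z x\bar y$, which is nonzero in general. Since $[O(C,n):O^+(C,n)]=2$, this forces $\Aut(\cT_C)=O^+(C,n)$.

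The case $\dim C = 8$ is the main obstacle, because $\Spin(\cS,n)$ is not naturally a subgroup of $O(C,n)$ but rather its double cover, so the isomorphism must come from triality. My plan has three steps. First, I pin down the dimension: by Theorem~\ref{th:schemestriple} we may work with $\Aut(\cT'_C)$, and by Lemma~\ref{lemma.automorphisms}, $L_x\in\Aut(\cT'_C)$ for every traceless $x$ of norm one. Setting $y=1$ in the defining identity $f((x\bar y)z)=(f(x)\overline{f(y)})f(z)$ yields $f(xz)=f(x)f(z)$ whenever $f(1)=1$, so the stabilizer of $1$ equals $\Aut(C)\cong G_2$, of dimension $14$. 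Second, I show transitivity on the unit sphere $\{a\in C:n(a)=1\}$: given a unit $a$, choose a traceless unit $c\in\cS$ with $n(a,c)=0$ (the intersection of the hyperplane $c\perp a$ with the $6$-dimensional traceless unit variety is nonempty over an algebraically closed field) and set $b:=-ac$, which is automatically a traceless unit; by right alternativity $bc=-a(cc)=a$, so $L_b L_c(1)=a$. Orbit-stabilizer then gives $\dim\Aut(\cT_C)=14+7=21=\dim\Spin(\cS,n)$. Third, and hardest, I would construct the isomorphism $\Spin(\cS,n)\cong\Aut(\cT_C)$ by invoking the principle of local triality on $\cC$, which realizes the faithful $8$-dimensional spin representation of $\Spin(\cS,n)=\Spin(7,\FF)$ on $\cC$; the triality identity ensures that the image lies in $\Aut(\cT_C)$, and the dimension count together with connectedness of $\Spin(7)$ forces the image to exhaust $\Aut(\cT_C)$.
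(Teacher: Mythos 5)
Your route is genuinely different from the paper's: the paper disposes of all four cases by citing Elduque's classification of the automorphism groups of the ternary composition algebras $\cT'_C$ (\cite[Corollary~6]{Eld96} for dimensions $1,2,4$ and \cite[Theorem~10]{Eld96} for dimension $8$), after transferring the problem from $\cT_C$ to $\cT'_C$ via Theorem~\ref{th:schemestriple}. Your self-contained computations for $\dim C=1,2$ are correct, and your sandwich argument for $\dim C=4$ --- exhibiting $O^+(C,n)$ inside $\Aut(\cT_C)$ via $x\mapsto px\bar q$ and excluding the full orthogonal group by checking that the standard involution (which has determinant $-1$) fails to preserve the triple product --- is a clean, elementary replacement for the citation. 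The stabilizer computation $\Stab(1)=\Aut(C)$ and the transitivity on the unit quadric in the octonion case are also sound (they reappear in the paper as Proposition~\ref{StabilizerIdentity} and Corollary~\ref{co:aut.left}), and they correctly yield $\dim\Aut(\cT_\cC)=21$.

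The octonion case, however, has a genuine gap exactly at its crux. The entire content of the isomorphism $\Aut(\cT_\cC)\cong\Spin(\cS,n)$ is the construction of a faithful homomorphism from $\Spin(\cS,n)$ to $\GL(\cC)$ whose image consists of automorphisms of the product $(x\bar y)z$; ``invoking the principle of local triality'' does not supply this. The standard triality statement produces related triples $(f_0,f_1,f_2)$ with $f_0(xy)=f_1(x)f_2(y)$, and one must still isolate the subgroup for which a single map $f$ satisfies $f((x\bar y)z)=(f(x)\overline{f(y)})f(z)$ and identify it with the spin group of the \emph{seven}-dimensional space $\cS$ --- this is precisely \cite[Theorem~10]{Eld96}, which the paper cites rather than reproves (note also that the quadratic form there is $-n$, so the paper needs the rescaling $s\mapsto\ii s$ to land on $\Spin(\cS,n)$). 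A second, smaller slip: connectedness of $\Spin(7,\FF)$ only shows that the image is the identity component of $\Aut(\cT_\cC)$; to exhaust the whole group you need connectedness of $\Aut(\cT_\cC)$ itself. Your own ingredients do give this --- every automorphism factors as $L_bL_c\,g$ with $g\in\Aut(\cC)$, so $\Aut(\cT_\cC)$ is the image of an irreducible variety --- but you did not say so, and the argument as written does not close.
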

\begin{proof}
Cases 1), 2) and 3) are consequence of \cite[Corollary~6]{Eld96}, where they were proven for the triple system $\cT'_C$. Note that in case 2), the torus $\FF^\times$ corresponds to the automorphisms acting on the Cartan basis via 
\begin{equation} \label{torus.f.lambda}
f_\lambda(e_1) = \lambda e_1, \qquad f_\lambda(e_2) = \lambda^{-1} e_2.
\end{equation}
for $\lambda\in\FF^\times$, and the group $\ZZ_2$ corresponds to the swapping automorphism $e_1 \leftrightarrow e_2$. Finally, recall from \cite[Theorem~10]{Eld96} that we have an isomorphism $\Aut(\cT'_C)\cong \Spin(W, q)$, where $W$ is the set of traceless elements of $C$ with the quadratic form $q = -n$. It is clear that $W = \cS$. Finally, note that if $\ii$ is a square root of $-1$ in $\FF$, then the map $\cS \to \cS $, $s\mapsto \ii s$ extends to an isomorphism $\Spin(\cS, q) \to \Spin(\cS, n)$. 
\end{proof}

\begin{corollary}\label{co:aut.left}
Let $\cC$ be a Cayley algebra. Then, 
$$ \Aut(\cT_\cC) = \langle L_a \med a\in \cC, n(a)=1, t(a)=0 \rangle. $$
\end{corollary}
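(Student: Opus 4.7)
The plan is to prove the two inclusions separately, starting with the easier one. For the inclusion $\langle L_a \med a\in \cC, n(a)=1, t(a)=0 \rangle \subseteq \Aut(\cT_\cC)$, I would simply invoke Lemma~\ref{lemma.automorphisms}, which already asserts that every such $L_a$ lies in $\Aut(\cT_\cC)\cap\Aut(\cT'_\cC)$. Hence the subgroup they generate is contained in $\Aut(\cT_\cC)$.

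For the reverse inclusion, the plan is to leverage the explicit description of $\Aut(\cT_\cC)$ obtained in Corollary~\ref{co:aut_triple}(4), namely $\Aut(\cT_\cC)\cong\Spin(\cS,n)$. The key point is that in Eldu's construction (quoted via \cite[Theorem~10]{Eld96}), the isomorphism is realized by sending a unit traceless element $a\in\cS$ to (a scalar multiple of) the operator $L_a$, so that even Clifford products $a_1a_2\cdots a_{2k}$ of unit vectors in $\cS$ correspond to compositions $L_{a_1}L_{a_2}\cdots L_{a_{2k}}$ acting on $\cC$. Since $\Spin(\cS,n)$ is generated by such even products of unit vectors (by the usual Cartan--Dieudonn\'e style construction of $\Spin$ from the Clifford algebra), it follows that every $f\in\Aut(\cT_\cC)$ is a product of $L_a$'s with $a$ traceless of norm~$1$. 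Combined with the easy inclusion, this yields equality.

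An alternative closure-of-the-plan is to argue dimensionally: the set $\{L_a \med a\in\cS,\ n(a)=1\}$ is an irreducible $6$-dimensional algebraic subvariety of $\Aut(\cT_\cC)$, and the subgroup it generates is Zariski-closed and connected; since $\Aut(\cT_\cC)\cong\Spin(\cS,n)$ is simple of dimension~$21$, any closed proper subgroup has strictly smaller dimension, so one only needs to check that the generated subgroup is not contained in a maximal subgroup, which can be verified by a direct computation with, say, the relation $L_a^2=-\mathrm{id}$ (coming from $a^2=-n(a)=-1$) and products $L_aL_b$ with $n(a,b)=0$.

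The main obstacle will be identifying precisely which element of $\Spin(\cS,n)$ corresponds to $L_a$ under the isomorphism of Corollary~\ref{co:aut_triple}(4): the cited theorem provides an abstract isomorphism, but one must verify, via the twist $s\mapsto \bi s$ used in the proof of that corollary, that the image of a unit vector $a\in\cS$ under the spin representation is (up to scalar) the operator $L_a$ on $\cC$. Once this correspondence is matched, the generation statement for $\Spin$ does the rest.
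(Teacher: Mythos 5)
Your first inclusion is exactly the paper's (Lemma~\ref{lemma.automorphisms}), but for the reverse inclusion you take a genuinely different route. The paper argues by transitivity on orbits: given $\varphi\in\Aut(\cT_\cC)$ with $z=\varphi(1)$, the Cayley--Dickson doubling process yields a traceless unit $s$ with $t(sz)=0$, so composing $\varphi$ with left multiplications by the traceless unit elements $s$ and $sz$ brings $1$ back to (a scalar multiple of) $1$; Proposition~\ref{StabilizerIdentity} identifies the stabilizer of $1$ with $\Aut(\cC)$, and $\Aut(\cC)\leq\langle L_a\rangle$ is then quoted from \cite[Corollary~8]{Eld96}. Your route instead goes through the isomorphism $\Aut(\cT_\cC)\cong\Spin(\cS,n)$ of Corollary~\ref{co:aut_triple}(4) together with the generation of the spin group by even products of anisotropic vectors. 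This can be made to work, and you have correctly located the one place where the content actually sits: Corollary~\ref{co:aut_triple}(4) as stated is only an \emph{abstract} isomorphism, so nothing about generation by the $L_a$ follows until you verify that Elduque's isomorphism is implemented by the Clifford representation $a\mapsto L_a$ (which exists because $L_a^2=L_{a^2}=-n(a)\,\id$ for traceless $a$, by left alternativity), and until you trace through the twist $s\mapsto\bi s$ and the normalization $q=-n$ versus $n(a)=1$. As written, the hard inclusion is therefore not yet proved; you are consuming the same external input as the paper, just in a form that still needs unpacking, whereas the paper's citation of Elduque's Corollary~8 is already in the needed shape. Two further remarks. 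First, your argument as outlined only directly yields $\Aut(\cT_\cC)=\langle L_aL_b\rangle$ (even products); this is harmless, since combined with the easy inclusion it sandwiches $\langle L_a\rangle$ between $\langle L_aL_b\rangle$ and $\Aut(\cT_\cC)=\langle L_aL_b\rangle$, forcing equality (note $-\id=L_a^2$ is itself an even product). Second, the dimension-count fallback is considerably weaker than you suggest: showing that a connected closed subgroup of $\Spin_7$ containing a $6$-dimensional quadric of generators is not proper is not a ``direct computation with $L_a^2=-\id$''; one would have to compute the Lie algebra generated by the corresponding tangent vectors or invoke a classification of maximal subgroups, so I would not lean on that alternative.
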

\begin{proof}
Set $G = \langle L_a \med a\in \cC, n(a)=1, t(a)=0 \rangle$. 
By Lemma~\ref{lemma.automorphisms}, we have that $G \leq \Aut(\cT_\cC)$. It is well-known that $\Aut(\cC, \inv) = \Aut(\cC)$. 
By \cite[Corollary 8]{Eld96} it follows that $\Aut(\cC) \leq G$.
Let $\varphi\in \Aut(\cT_\cC)$ and set $z = \varphi(1)$. Then, by the Cayley-Dickson doubling process, it follows that there exists $s\in\cC$ such that $n(s) = 1$, $t(s)=0$ and $t(sz) = 0$.
Then $L_s\in\Aut(\cT_\cC)$. Since $\Aut(\cT_\cC) \leq O(\cC, n)$, we have that $n(sz) = n(L_s\varphi(1)) = 1$, so that $L_{sz}\in\Aut(\cT_\cC)$. Besides, $L_{sz}L_s\varphi(1) = 1$. By Proposition~\ref{StabilizerIdentity}, we get that $L_{sz}L_s\varphi\in\Aut(\cC)\leq G$, which implies that $\varphi\in G$. This proves that $\Aut(\cT_\cC)\leq G$, and the result follows.
\end{proof} 

\begin{corollary} \label{co:equiv.grads.classif}
Let $C$ be a Hurwitz algebra. Then, the gradings and the equivalence classes of fine gradings are the same on $\cT_C$ and on $\cT'_C$.
\end{corollary}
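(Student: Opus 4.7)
The plan is to reduce the statement to the equality of automorphism group schemes established in Theorem~\ref{th:schemestriple} via the standard correspondence between gradings and affine group scheme morphisms recalled at the end of the section on gradings on algebras (cf.\ \cite[Appendix~A]{EKmon}). Concretely, for a finite-dimensional algebra $\cA$ and an abelian group $G$, a $G$-grading on $\cA$ is the same data as a morphism of affine group schemes $G^D\to\AAut(\cA)$, where $G^D$ is the diagonalizable group scheme dual to $G$. The grading is recovered from such a morphism as the simultaneous eigenspace decomposition under the image of $G^D$, and this recovery uses only the underlying vector space structure of $\cA$.

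From this point of view, the key input is Theorem~\ref{th:schemestriple}, which asserts $\AAut(\cT_C)=\AAut(\cT'_C)$ as subgroup schemes of $\GL(C)$. Composing this with the correspondence above, for every abelian group $G$ the sets of $G$-gradings on $\cT_C$ and on $\cT'_C$ are literally the same vector space decompositions of $C$. In particular, a decomposition is a grading on $\cT_C$ if and only if it is a grading on $\cT'_C$, and refinements are defined purely in terms of the vector space decomposition, so the notions of fine grading, universal group, and support all transfer verbatim between the two systems.

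For the equivalence classes of fine gradings, recall that equivalence is defined via an automorphism of the algebra (possibly combined with a relabeling of the group), i.e.\ via an element of $\Aut(\cT_C)=\AAut(\cT_C)(\FF)$ or, respectively, of $\Aut(\cT'_C)=\AAut(\cT'_C)(\FF)$. Theorem~\ref{th:schemestriple} gives in particular the equality of $\FF$-points of these group schemes, so an equivalence between two fine gradings regarded on $\cT_C$ is also an equivalence regarded on $\cT'_C$, and vice versa. Hence the equivalence classes coincide. There is no substantive obstacle here once Theorem~\ref{th:schemestriple} is in hand; the corollary is a formal consequence of the scheme-theoretic framework and the fact that gradings depend only on the automorphism group scheme, not on the specific multilinear operation.
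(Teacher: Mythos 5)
Your argument is correct and is exactly the reasoning the paper intends: the corollary is stated as an immediate consequence of Theorem~\ref{th:schemestriple}, since the equality of affine group schemes $\AAut(\cT_C)=\AAut(\cT'_C)$ means the corresponding morphisms $G^D\to\AAut$ (hence the $G$-gradings as decompositions of $C$) coincide, and the equality of $\FF$-points identifies the equivalence relations. No discrepancy with the paper's (implicit) proof.
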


\smallskip

\begin{remark} \label{remark.conjugate.inverse}
It is well known that if $\chr\FF \neq 3$, conjugate inverses of elements of simple structurable algebras are unique if they exist. Assume now that $\chr\FF = 3$ and let $C$ be a Hurwitz $\FF$-algebra.

Consider first the case where $C$ is the $2$-dimensional Hurwitz algebra $K$. Then, for any skew element $s$, we have that $V_{1,s} = 0$, but also $V_{1,1} = \id$, so that $V_{1,1 + s} = \id$. Therefore, the conjugate inverse of $1$ is not unique in this case.

Now consider the case $\dim C > 2$. Let $y$ be a conjugate inverse of $1$, that is $V_{1,y} = \id$. Set $y = \lambda 1 + s$ with $\lambda\in\FF$ and $s\in\cS(C,\inv)$. Then $1 = V_{1,y}(1) = V_{1, \lambda 1}(1) + V_{1,s}(1) = \lambda 1 + 0 = \lambda 1$, from where it follows that $\lambda = 1$. Thus $\id = V_{1,y} = V_{1,1} + V_{1,s} = \id + V_{1,s}$ and $V_{1,s} = 0$; but we also have that $V_{1,s}(x) = [x,s]$ for each $x\in C$. Hence, $[x, s] = 0$ for all $x\in C$, that is, $s\in Z_c(C)\cap\cS(C,\inv) = 0$ (where $Z_c(C)$ is the commutative center of $C$), so that $s = 0$. This proves that $1$ is the only conjugate inverse of $1$ in this case.
\end{remark}

\bigskip

We will now classify the orbits of Kantor pairs and triple systems associated with a Hurwitz algebra.

\begin{notation}
Let $C$ be a Hurwitz algebra and $\lambda\in\FF^\times$.
Let $n$ denote the (quadratic) norm of $C$ as a Hurwitz algebra.
(Note that in the $1$-dimensional case, the norm of a structurable algebra is linear, and so does not coincide with $n$.) \\
$\bullet$ If $\dim C > 1$, where we assume that either $\dim C \neq 2$ or $\chr\FF \neq 3$ (or both), we define:
\begin{align*}
\cO_0 &\coloneqq \{0\}, \qquad
\cO_1 \coloneqq \{0\neq x\in C \med n(x) = 0\}, \\
\cO_2 &\coloneqq \{x\in C \med n(x) \neq 0\}, \qquad
\cO_2(\lambda) \coloneqq \{x\in C \med n(x) = \lambda\}.
\end{align*}
$\bullet$ In the case $\dim C = 1$, we define:
\begin{align*}
\cO_0 \coloneqq \{0\}, \qquad \cO_1 \coloneqq \{x\in C\med x \neq 0\}, \qquad \cO_1(\lambda) \coloneqq \{x\in C\med n(x) = \lambda\}.
\end{align*}
In every case, we will say that an element $x$ of $C$ has \textit{rank} $i$ if $x\in\cO_i$.
\end{notation}

\begin{proposition} \label{orbits.pair}
Let $\cV_C$ be the Kantor pair associated with a Hurwitz algebra $C$, and consider the orbits of the action of $\Aut(\cV_C)$ on $\cV_C^+$ (or $\cV_C^-$). Then:\\
1) If $\dim C = 1$, then the orbits are $\cO_0$ and $\cO_1$. \\
2) If $\dim C > 1$, with either $\dim C \neq 2$ or $\chr\FF \neq 3$, then the orbits are $\cO_0$, $\cO_1$ and $\cO_2$. \\
Furthermore, in both cases, conjugate inverses are unique and only exist for elements in the orbit of nonzero norm.
\end{proposition}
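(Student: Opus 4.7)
The plan is to combine the torus $\{c_\lambda\mid\lambda\in\FF^\times\}$ of Notation~\ref{notation.second.triple} with the diagonal embedding $\Aut(\cT_C)\hookrightarrow\Aut(\cV_C)$, $\varphi\mapsto(\varphi,\varphi)$, which is well-defined because the two triple products of $\cV_C$ are those of $\cT_C$. For (1), if $\dim C=1$ then $\cV_C^+\cong\FF$ and $c_\lambda$ acts by scalar multiplication, so the orbits are $\{0\}$ and $\FF^\times=\cO_1$, as asserted.

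For (2), Theorem~\ref{th:schemestriple} gives $\Aut(\cT_C)\le\Ort(C,n)$, and the automorphism $c_\mu$ scales the norm by $\mu^2$. Since $(\FF^\times)^2=\FF^\times$, the $c_\mu$ identify all the quadrics $\cO_2(\lambda)$ with $\cO_2(1)$, so it suffices to show that $\Aut(\cT_C)$ acts transitively on $\cO_1$ and on $\cO_2(1)$. For $\dim C=2$ this is an elementary computation in the Cartan basis, using $\Aut(\cT_C)\cong\FF^\times\rtimes\ZZ_2$ from Corollary~\ref{co:aut_triple} (the torus $f_\mu$ of \eqref{torus.f.lambda} together with the swap $e_1\leftrightarrow e_2$). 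For $\dim C=4$, Corollary~\ref{co:aut_triple} identifies $\Aut(\cT_C)\cong\Ort^+(C,n)$, and over an algebraically closed field of characteristic $\neq 2$ the special orthogonal group of a nondegenerate form acts transitively on each nonempty level set of $n$. For $\dim C=8$, the proof of Corollary~\ref{co:aut.left} shows that $L_s^{-1}L_{sz}^{-1}$ is an element of $\Aut(\cT_\cC)$ carrying $1$ to any prescribed $z$ of norm $1$, and a parallel choice of auxiliary $L_a$ yields transitivity on $\cO_1$ (matching the orbit structure of the spin representation of $\Spin(7)$ on an $8$-dimensional quadratic space).

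For the last assertion, a direct computation of $\{1,1,1\}=1$ shows $V_{1,1}=\id$, so $1$ is a conjugate inverse of itself. Since automorphisms intertwine the $V$-operators via $\varphi^+V_{x,y}(\varphi^+)^{-1}=V_{\varphi^+(x),\varphi^-(y)}$, possessing a conjugate inverse is an $\Aut(\cV_C)$-invariant property; by the transitivity in step~(2) it propagates from $1$ to all of $\cO_2$. Uniqueness of the conjugate inverse of $1$ is exactly Remark~\ref{remark.conjugate.inverse}, whose hypotheses are satisfied in every case of the proposition (either $\chr\FF\neq 3$, or $\dim C>2$, in which case $Z_c(C)\cap\cS(C,\inv)=0$ because $C$ is noncommutative), and uniqueness transports along the orbit. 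For non-existence on $\cO_1$, by transitivity it is enough to rule out a single isotropic element, and I would take $x=e_1$ in the Cartan basis: writing $y$ in the basis and expanding the equation $V_{e_1,y}(1)=e_1+e_2$ yields simultaneous linear conditions on the coefficient of $e_2$ in $\bar y$ of the form $2\alpha=1$ and $-\alpha=1$, which are incompatible.

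The main obstacle will be the octonion subcase of the transitivity step: neither a direct appeal to $\Spin(7)$-orbit structure on the spin representation nor the $L_a$-generators of Corollary~\ref{co:aut.left} is completely automatic, and in particular the isotropic case requires a different choice of auxiliary element than the one used for the norm-$1$ quadric.
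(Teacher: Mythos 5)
Your overall strategy (reduce to transitivity of the subgroup $\langle c_\lambda,\Aut(\cT_C)\rangle$ on $\cO_1$ and on $\cO_2(1)$, then separate the two nontrivial orbits via existence of conjugate inverses) is close in spirit to the paper's, and several pieces are sound, but two steps do not go through as written.

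First, the separation of $\cO_1$ from $\cO_2$. The only mechanism you offer for showing these are distinct $\Aut(\cV_C)$-orbits is the non-existence of a conjugate inverse for $e_1$, and the computation you propose fails precisely when $\chr\FF=3$ and $\dim C>2$, a case the proposition covers. Writing $y=\alpha e_1+\beta e_2+\sum_i\gamma_iu_i+\sum_i\delta_iv_i$, one finds $V_{e_1,y}(1)=2\beta e_1-\beta e_2-\sum_i\gamma_iu_i-2\sum_i\delta_iv_i$, so your two conditions are $2\beta=1$ and $-\beta=1$; these are incompatible exactly when $3\neq0$ in $\FF$, and in characteristic $3$ they are solved by $\beta=-1$. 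The paper avoids this by evaluating $V_{e_1,y}$ at $u_1$ instead of at $1$: the $u_1$-coordinate of $\{e_1,y,u_1\}$ vanishes identically, which rules out $V_{e_1,y}=\id$ in every characteristic; for $\chr\FF\neq3$ it also separates the orbits by comparing $\im U_{e_1}=\FF e_1$ with $\im U_1=C$. You need one of these (or further coordinates of the system $V_{e_1,y}(z)=z$) to close the characteristic-$3$ case.

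Second, transitivity on $\cO_1$ for $\dim C=8$ is the genuinely hard step, and your proposal leaves it as an acknowledged obstacle rather than proving it. Appealing to ``the orbit structure of the spin representation of $\Spin(7)$'' is essentially the statement to be proved, and the generators $L_a$ of Corollary~\ref{co:aut.left} do not hand you an element carrying a given isotropic $x$ to $e_1$ without real work: the paper splits into the subcases $t(x)\neq0$ (rescale $x$ to an isotropic idempotent and complete it to a Cartan basis) and $t(x)=0$ (then $x^2=0$, a Cartan basis is built with $x=u_1$, and an explicit automorphism $L_z$ coming from \eqref{product.cd.by.cartan} moves $u_1$ into $\FF e_1$ before scaling by $c_\lambda$). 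Minor points: your $\Ort^+$-transitivity argument does cover $\dim C=4$; using only $\Aut(\cT_C)$ and $c_\lambda$ correctly avoids circularity with Theorem~\ref{aut_Hurwitz_pair}, whose proof relies on this proposition; and the transport of uniqueness along orbits via Remark~\ref{remark.conjugate.inverse} is fine.
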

\begin{proof}
We will only consider the subspace $\cV_C^+ = C$ (for $\cV_C^-$ the proof is analogous). It is clear that $\cO_0$ is always an orbit. 
It is easy to see that $\Aut(\cV_\FF) = \langle c_\lambda \med \lambda\in\FF^\times \rangle$, from where the classification follows in the case $\dim C = 1$.
Consider from now on the case $\dim C > 1$. 

We claim that there are at least $2$ nontrivial orbits. If $\chr\FF \neq 3$, for $e_1$ in a Cartan basis of $C$, it is easy to see that $\im U_{e_1} = \FF e_1$, $\im U_1 = C$, which have different dimensions, hence the claim follows. Now consider the case with $\chr\FF = 3$ and $\dim C > 2$. It is clear that $V_{1,1} = \id$ and $1$ has a conjugate inverse. If $e_1$ were invertible, we would have $V_{e_1, y} = \id$ for some $y\in C$; however, a straightforward computation using the Cartan basis shows that for the coordinates of the element $\{e_1, y, u_1\}$ in the Cartan basis, the coordinate corresponding to $u_1$ is $0$, so that $V_{e_1, y}(u_1) \neq u_1$ and $V_{e_1, y} \neq \id$; hence $e_1$ has no conjugate inverse, and therefore $1^+$ and $e_1^+$ are in different orbits. We have proven the claim.

Assume now that $\dim C = 2$. Then, for each $\lambda\in\FF^\times$, we have an automorphism $f_\lambda$ of $\cT_C$ as in \eqref{torus.f.lambda}.
Recall that $c_\lambda$ is an automorphism of $\cV_C$. Also, since $C$ is commutative, the involution defines a swapping automorphism $e_1 \leftrightarrow e_2$ on $C$. These automorphisms of $\cV_C$ show that any nonzero element of $C$ is either in the orbit of $e_1$ or in the orbit of $1$. The fact that there are at least $2$ nontrivial orbits proves the result in this case. We can assume from now on that $\dim C > 2$.

\medskip

Take $0\neq x\in C$ such that $n(x) = 0$. We claim that $x$ is in the orbit of the element $e_1$ for some Cartan basis of $C$.

Assume that $\lambda := n(x,1) \neq 0$. By applying the automorphism $c_{\lambda^{-1}}$, we can assume $n(x,1) = 1$. Then $e_1 := x$ and $e_2 := \overline{e_1} = 1 - e_1$ are isotropic orthogonal idempotents, and we can use the construction given in \cite{Eld98} (or in \cite[Chapter 4]{EKmon}) to complete them to a Cartan basis with multiplication table as in Figure~\ref{product.cartan.basis}. This proves the claim in this case.

Now suppose $n(x,1) = 0$. Then $x^2 = 0$ because $n(x) = 0 = t(x)$. Since the norm is nondegenerate, there exists $y\in C$ such that $n(x,y) = 1$. Thus $n(1, \overline{x}y) = 1$, and the elements $e_1:=\overline{x}y$ and $e_2:= \overline{e_1}=1-e_1$ are isotropic idempotents. Again, we can complete $\{e_1, e_2\}$ to a Cartan basis. Notice that $n(x,e_1) = n(x,\overline{x}y) = n(x^2,y) = 0$, because $x^2 = 0$, and $n(x,e_2)=n(x,1-e_1)=n(x,1)-n(x,e_1)=0$. Hence $x\in (\FF e_1 + \FF e_2)^{\perp}$. Furthermore, $xe_1 = x(\overline{x}y) = (x \overline{x})y=n(x)y = 0$ and $0 = \overline{x e_1} = \overline{e_1}$ $\overline{x} = e_2(-x)$, so $xe_1 = 0 = e_2x$. We have $xe_2 = x(1-e_1) = x$ and similarly $e_1 x = x$, so that $x e_2 = x = e_1 x$. We have that $x\in U: = e_1 C e_2$, and $(\FF e_1 + \FF e_2)^{\perp}= U\oplus V$ where $V:= e_2 C e_1$. One more time, using the construction in \cite{Eld98}, we can assume that $x = u_1$ in a Cartan basis. With the relation in \eqref{cartan.bases.relation}, equation \eqref{product.cd.by.cartan} shows that there is an automorphism of the form $L_z$, for some traceless element $z\in C$ of norm $1$, such that $L_z(x) \in \FF e_1$. Then, we can take $\lambda\in\FF^\times$ such that $c_\lambda L_z (x) = e_1$. We have proven the claim for the orbit of rank $1$ in all cases.

\medskip

Fix $x\in \cO_2$, that is, $n(x)\neq 0$. We now claim that $x$ and $1$ belong to the same orbit.

Take $\lambda\in\FF^\times$ such that $\lambda^2 n(x) = 1$; then $n(c^+_\lambda(x)) = 1$ and without loss of generality we can assume that $n(x) = 1$. If $n(x,1) = 0$, then $L_{\bar x}\in \Aut(\cV_C)$ and $L_{\bar x}(x) = \bar xx = n(x)1 = 1$, which proves the claim in this case. From now on, assume that $\lambda := n(x,1) \neq 0$. By the Cayley-Dickson doubling process applied to the algebra generated by $x$, there exists a traceless element $y\in (\FF 1 + \FF x)^\perp$ of norm $1$. Then, the element $z = L_y(x) = yx$ has norm $1$, and is also traceless because $n(z,1) = n(yx,1) = n(x, \bar y) = -n(x,y) = 0$. Consequently, $L_y$ and $L_{\bar z}$ are automorphisms and we have that $L_{\bar z} L_y (x) = 1$, which proves the claim.

\smallskip

Since there are at least $2$ nontrivial orbits in the case $\dim C > 2$, the classification follows for the remaining cases. The last statement follows from Remark~\ref{remark.conjugate.inverse} in the case $\chr\FF = 3$, and was proven in \cite{AF92} for the case $\chr\FF \neq 3$.
\end{proof}

\begin{proposition} \label{orbits.triple}
Let $\cT_C$ be the Kantor triple system associated with a Hurwitz algebra $C$, and consider the orbits of the action of $\Aut(\cT_C)$ on $C$. Then:\\
1) If $\dim C = 1$, the orbits are $\cO_0$ and $\cO_1(\lambda)$, with $\lambda\in\FF^\times$. \\
2) If $\dim C > 1$, the orbits are $\cO_0$, $\cO_1$ and $\cO_2(\lambda)$, with $\lambda\in\FF^\times$.
\end{proposition}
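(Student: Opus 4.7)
The plan is to adapt the proof of Proposition~\ref{orbits.pair}, using the new constraint from Theorem~\ref{th:schemestriple} that $\Aut(\cT_C) \leq O(C, n)$: every automorphism of $\cT_C$ preserves the norm, so each orbit is contained in a level set of $n$. Thus $\cO_0 = \{0\}$ is automatically an orbit, and it suffices to show that $\cO_1$ (when $\dim C > 1$) and each $\cO_2(\lambda)$ with $\lambda \neq 0$ form single orbits.

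The low-dimensional cases I would handle directly from Corollary~\ref{co:aut_triple}. If $\dim C = 1$, then $\Aut(\cT_C) = \{\pm\id\}$ and $\cO_1(\lambda) = \{\pm\sqrt{\lambda}\}$ is an orbit for each nonzero $\lambda$. If $\dim C = 2$, then $\Aut(\cT_K) \cong O(K, n)$ is generated by the maps $f_\lambda$ of \eqref{torus.f.lambda} together with the swap $e_1 \leftrightarrow e_2$, and transitivity on each nonzero level set of $n$ is immediate in Cartan coordinates.

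For $\dim C \in \{4, 8\}$ the key tool is that $L_y \in \Aut(\cT_C)$ whenever $y \in \cS$ satisfies $n(y) = 1$, by Lemma~\ref{lemma.automorphisms}. To show each $\cO_2(\mu)$ with $\mu \neq 0$ is a single orbit, the plan is to first reduce any $x$ of norm $\mu$ to the traceless case. Since $\dim \cS \geq 3$ in both cases, one can find $y \in \cS$ with $n(y) = 1$ and $n(y, x) = 0$: the quadric $\{y \in \cS : n(y) = 1\}$ meets the affine hyperplane $\{y \in \cS : n(y, x) = 0\}$ (this hyperplane is proper unless $x \in \FF\cdot 1$, in which case any traceless unit-norm $y$ works since $n(y,1) = 0$). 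Then $t(L_y(x)) = n(yx, 1) = -n(x, y) = 0$, so $L_y(x)$ is traceless of norm $\mu$. Once $x$ is traceless of norm $\mu$, the element $z := -x/\sqrt{\mu}$ lies in $\cS$ with $n(z) = 1$, and the identity $x^2 = -n(x) = -\mu$ for traceless $x$ yields $L_z(x) = -x^2/\sqrt{\mu} = \sqrt{\mu} \cdot 1$. Hence every element of $\cO_2(\mu)$ lies in the orbit of $\sqrt{\mu}\cdot 1$.

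The main obstacle is transitivity on $\cO_1$, since the pair proof finishes by applying $c_\lambda$ to kill a residual scalar and $c_\lambda$ is no longer an automorphism of $\cT_C$. In the quaternion case, Corollary~\ref{co:aut_triple} identifies $\Aut(\cT_C) = O^+(C, n)$, and transitivity on the punctured quadric $\cO_1$ (with $\dim C \geq 3$) follows from Witt's extension theorem plus the standard parity correction (multiplying by a reflection orthogonal to both source and target to adjust the sign of the determinant). In the octonion case, $\Aut(\cT_\cC) \cong \Spin(\cS, n) \cong \Spin(7)$ acts on $\cC$ via the $8$-dimensional spin representation, and I would invoke the classical fact that this representation has a single nonzero orbit of isotropic vectors; concretely, the stabilizer in $\Spin(7)$ of a nonzero isotropic vector is a closed $14$-dimensional subgroup, giving an orbit of dimension $21 - 14 = 7 = \dim\cO_1$ which, by the irreducibility of $\cO_1$ and the local closedness of algebraic orbits, must equal $\cO_1$.
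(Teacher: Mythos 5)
Your treatment of the cases $\dim C=1,2$ and of the rank-$2$ orbits is correct, and the rank-$2$ argument is actually more self-contained than the paper's: reducing to a traceless representative by a left multiplication $L_y$ with $y\in\cS$, $n(y)=1$, $n(y,x)=0$, and then applying $L_{-x/\sqrt{\mu}}$ to land on $\sqrt{\mu}\cdot 1$, uses nothing beyond Lemma~\ref{lemma.automorphisms}. The genuine problem is the octonion rank-$1$ case. Your ``concrete'' justification asserts that the stabilizer in $\Spin(7)$ of a nonzero isotropic vector of the spin representation is $14$-dimensional, but that assertion is essentially equivalent to the transitivity you are trying to prove, so as written the argument is circular unless you import the classification of spinor orbits as an external black box. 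Worse, even granting that some isotropic vector has a $7$-dimensional orbit, the inference ``$7$-dimensional orbit, $\cO_1$ irreducible, orbits locally closed, hence the orbit equals $\cO_1$'' is invalid: it only shows that the orbit is open and dense in $\cO_1$, and the closed invariant complement could a priori be a nonempty union of smaller orbits, exactly as happens for the rank stratification of the singular matrices under $\GL_n\times\GL_n$. To close the gap you would have to show that \emph{every} nonzero isotropic vector has a stabilizer of dimension exactly $14$ (for instance at the Lie algebra level), or cite the two-orbit structure of the $8$-dimensional spin representation directly.

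The paper avoids all of this with a more elementary device worth noting: the only obstruction to recycling the proof of Proposition~\ref{orbits.pair} is the unavailable scaling $c_\lambda$, and for a rank-$1$ element that scalar can be absorbed by automorphisms of the algebra $C$ itself. Concretely, after placing $x$ (up to $\Aut(\cT_C)$) inside $\FF u_1$ or $\FF e_1$ for a Cartan basis, the torus in $\Aut(C)\leq\Aut(\cT_C)$ producing the Cartan grading realizes every scaling $u_1\mapsto\alpha u_1$ with $\alpha\in\FF^\times$, so all nonzero elements of $\FF u_1$ (hence of $\FF e_1$, after moving one line to the other by a suitable homogeneous left multiplication as in \eqref{product.cd.by.cartan}) lie in a single orbit. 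This works uniformly for $\dim C=4,8$ and needs no Witt-type or spin-orbit input; your Witt-plus-reflection argument in the quaternion case is correct but does by hand what the torus gives for free, and it is precisely the step that fails to generalize cleanly to the octonions.
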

\begin{proof}
The result in the case $\dim C = 1$ follows easily from Corollary~\ref{co:aut_triple}.
The case with $\dim C = 2$ is easy to check using a Cartan basis $\{e_1, e_2\}$ and the automorphism group in Corollary~\ref{co:aut_triple} (which is generated by the automorphisms of the form $f_\lambda$ as in \eqref{torus.f.lambda}, and the swapping automorphism $e_1 \leftrightarrow e_2$). We can assume from now on that $\dim C > 2$. Note that $\cO_0$ is obviously an orbit in all cases.

\smallskip

Fix $0\neq x\in C$ such that $n(x) = 0$. We need to prove that $x$ is in the orbit of the element $e_1$ in a Cartan basis of $C$.

First, consider the case with $\lambda := n(x,1)\neq 0$. Then $e_1 := \lambda^{-1} x$ is an isotropic idempotent. As in Proposition~\ref{orbits.pair}, we can extend $e_1 $ to a Cartan basis, proving the claim in this case.

Now consider the case where $n(x, 1) = 0$. With the same arguments used in the proof of Proposition~\ref{orbits.pair}, we deduce that $x = u_1$ and that there is an automorphism $f \in\Aut(\cT_C)$ such that $f(x) \in \FF e_1$, where $e_1$ and $u_1$ belong to some Cartan basis of $C$. From the $2$-torus producing the Cartan grading on $C$, it follows that for each $\alpha\in\FF^\times$ there is some $g_\alpha\in\Aut(C)\leq\Aut(\cT_C)$ such that $g_\alpha (u_1) = \alpha u_1$. Therefore, $f(g_\alpha(x)) = \alpha f(x) \in \FF e_1$ for each $\alpha\in\FF^\times$, which forces $x$ and $e_1$ to be in the same orbit. We have proven the claim for all elements of rank $1$.

\smallskip

Consider the last case, that is, $x\in\cO_2$. Then $\mu := n(x) \neq 0$, and since $\FF$ is algebraically closed, we can take $\lambda\in\FF^\times$ such that $n(x) = \lambda^2$. Then we can write $x = \lambda y$ with $n(y) = 1$. With the same arguments used in the proof of Proposition~\ref{orbits.pair}, it follows that there is $f\in\Aut(\cT_C)$ such that $f(y) = 1$. Consequently, $f(x) = \lambda 1$, so that $x$ is in the orbit of $\lambda 1$. It follows that the elements of $\cO_2(\mu)$ are in the same orbit. 

Finally, by Theorem~\ref{th:schemestriple}, we have that $\Aut(\cT_C) \leq O(C, n)$, which forces the sets $\cO_1$ and $\cO_2(\lambda)$, for $\lambda\in\FF^\times$, to be contained in different orbits, and the result follows. 
\end{proof}

\smallskip

Now we will deal with the automorphisms and orbits of the most exceptional case in our list:

\begin{theorem} \label{th.aut.V_K.char3}
Let $K$ be a $2$-dimensional Hurwitz $\FF$-algebra, with $\chr\FF = 3$, and $\cV_K$ its associated Kantor pair.
Then there is a group isomorphism
$$\Psi\colon \Aut(\cV_K) \to \GL(K) \cong \GL_2(\FF), \quad (\varphi^+, \varphi^-) \mapsto \varphi^+,$$
where $\varphi^-$ is the dual inverse of $\varphi^+$ relative to the bilinear trace. 
Consequently, there is only one nontrivial $\Aut(\cV_K)$-orbit on $K^+$ (or $K^-$).
\end{theorem}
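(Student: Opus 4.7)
The plan is to first show that in characteristic $3$ the Kantor triple product on $K$ degenerates dramatically: a direct computation in the basis $\{1,s\}$ with $s^2=1$, $\bar s=-s$, expanding
$$
\{x,y,z\} = (x\bar y)z + (z\bar y)x - (z\bar x)y,
$$
gives terms collected into a ``diagonal'' contribution plus cross terms whose coefficient is exactly $3$. In characteristic $3$ the cross terms vanish and one obtains
$$
\{x,y,z\} = \beta(x,y)\, z,
$$
where $\beta(x_0+x_1s,\, y_0+y_1s) = x_0y_0 - x_1y_1$. This $\beta$ is a nondegenerate symmetric bilinear form, and a comparison with $t(x\bar y) = 2(x_0y_0 - x_1y_1)$ identifies it (up to the nonzero scalar $2$) with the bilinear trace form on $K$. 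This is the key structural fact; I expect the only real bookkeeping burden to lie in writing out these terms and keeping signs straight, so this is the step where calculation errors are most likely.

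With the formula $\{x,y,z\}^\sigma = \beta(x,y)z$ in hand, the automorphism condition becomes transparent. For $\varphi=(\varphi^+,\varphi^-)$ with $\varphi^\sigma\in\GL(K)$, the identity $\varphi^\sigma\{x,y,z\}^\sigma = \{\varphi^\sigma x,\varphi^{-\sigma}y,\varphi^\sigma z\}^\sigma$ reads
$$
\beta(x,y)\,\varphi^\sigma(z) \;=\; \beta\bigl(\varphi^\sigma x,\,\varphi^{-\sigma} y\bigr)\,\varphi^\sigma(z)
$$
for all $x,y,z\in K$. Choosing $z\neq 0$ (so $\varphi^\sigma(z)\neq 0$) gives the single condition
$$
\beta\bigl(\varphi^+ x,\,\varphi^- y\bigr) = \beta(x,y) \qquad \text{for all } x,y\in K,
$$
and the analogous condition for $\sigma=-$ is equivalent. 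Since $\beta$ is nondegenerate, for any chosen $\varphi^+\in\GL(K)$ there is exactly one $\varphi^-\in\GL(K)$ satisfying this equation, namely the $\beta$-dual inverse of $\varphi^+$, i.e.\ $\varphi^- = \bigl((\varphi^+)^*\bigr)^{-1}$ where $^*$ denotes the transpose with respect to $\beta$ (equivalently, with respect to the bilinear trace).

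It then follows that the map $\Psi\colon (\varphi^+,\varphi^-)\mapsto \varphi^+$ is a well-defined injection from $\Aut(\cV_K)$ into $\GL(K)\cong\GL_2(\FF)$, and it is surjective because the construction above produces a pair from any $\varphi^+$. Since composition in $\Aut(\cV_K)$ is componentwise, $\Psi$ is a group homomorphism, hence an isomorphism. For the orbit statement: $\GL_2(\FF)$ acts transitively on $K\setminus\{0\}$, so via $\Psi$ the group $\Aut(\cV_K)$ acts with exactly two orbits on $K^+$, namely $\{0\}$ and $K\setminus\{0\}$; the same holds on $K^-$ by symmetry. This completes the proof, the one genuine obstacle being the verification of the degenerate product formula in the first paragraph; once that is in place, the rest is essentially linear algebra.
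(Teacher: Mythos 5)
Your proof is correct, and the key computation checks out: expanding $(x\bar y)z+(z\bar y)x-(z\bar x)y$ in the basis $\{1,s\}$ with $s^2=1$, $\bar s=-s$, the coefficient of $1$ is $(x_0y_0-x_1y_1)z_0+3(x_1y_0-x_0y_1)z_1$ and the coefficient of $s$ is $(x_0y_0-x_1y_1)z_1+3(x_1y_0-x_0y_1)z_0$, so in characteristic $3$ the product collapses to $\{x,y,z\}=\beta(x,y)z$ with $\beta(x,y)=x_0y_0-x_1y_1=\tfrac12 n(x,y)$, a nonzero multiple of the bilinear trace; your formula is also consistent with the multiplication table the paper records for the Cartan basis. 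Your route is genuinely different from the paper's. The paper only exploits the specialization $z=x$, i.e.\ the identity $U_x(y)=2n(x,y)x$ in characteristic $3$, which yields injectivity of $\Psi$ and the dual-inverse relation between $\varphi^-$ and $\varphi^+$; surjectivity is then obtained by writing out the full triple-product table on the Cartan basis and exhibiting explicit automorphisms $\tau$, $T_{\alpha,\beta}$, $A_\lambda$ realizing elementary matrices, each verified case by case. Your observation that the \emph{entire} trilinear product degenerates to $\beta(x,y)z$ makes the automorphism condition literally equivalent to $\beta(\varphi^+x,\varphi^-y)=\beta(x,y)$, so injectivity and surjectivity come out of nondegeneracy of $\beta$ in one stroke, with no generators to check. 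The trade-off is that you must carry out one careful three-variable expansion (which you correctly flag as the only delicate point), whereas the paper's verification of $A_\lambda$ etc.\ is longer but each step is a one-line evaluation; your argument also makes it conceptually transparent \emph{why} this case is exceptional, namely that $\cV_K$ in characteristic $3$ retains no more structure than a nondegenerate pairing between $K^+$ and $K^-$.
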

\begin{proof}
It is clear that $\Psi$ is a homomorphism. Let $\varphi\in\Aut(\cV_K)$. By \eqref{eq_Ux_Hurwitz} and since $\chr\FF = 3$, we have that
$$2n(x,y) \varphi^+(x) = \varphi^+\big(U^+_x(y)\big) = U^+_{\varphi^+(x)}\big(\varphi^-(y)\big) = 2n\big(\varphi^+(x), \varphi^-(y)\big) \varphi^+(x)$$
for all $x\in\cV^+_K$, $y\in\cV^-_K$, which implies that $n(x,y) = n\big(\varphi^+(x), \varphi^-(y)\big)$ for all $x\in\cV^+_K$, $y\in\cV^-_K$.
Hence $\varphi^-$ and $\varphi^+$ are dual inverses relative to the trace, and they determine each other. It follows that $\Psi$ is injective.

Since we assume that $\FF$ is algebraically closed, we have that there exists a Cartan basis $\{e_1, e_2\}$ of $K$, $K = \FF e_1 \oplus \FF e_2 \cong \FF\times\FF$, and the involution is given by $e_1 \leftrightarrow e_2$. It is straightforward to see that the triple product of $K$ on the Cartan basis is given by:
\begin{equation} \label{triple.product.K} \begin{aligned}
& \{e_1,e_2,e_1\} = 2e_1, \qquad \{e_2,e_1,e_1\} = -e_1, \\
& \{e_2,e_1,e_2\} = 2e_2, \qquad \{e_1,e_2,e_2\} = -e_2, \\
& \{e_i,e_i,e_i\} = \{e_1, e_1,e_2\} = \{e_2,e_2,e_1\} = 0.
\end{aligned}\end{equation}

Let $\tau\in\Aut(K,\inv)\leq\Aut(\cV_K)$ be the automorphism $\tau\colon e_1 \leftrightarrow e_2$ that swaps the idempotents of the Cartan basis (i.e., the involution). Fix $\alpha,\beta\in\FF^\times$ and consider the maps
\begin{equation*} \begin{aligned}
T^+_{\alpha,\beta} \colon \quad e_1^+ &\mapsto \alpha e_1^+, & \quad e_2^+ &\mapsto \beta e_2^+, \\
T^-_{\alpha,\beta} \colon \quad e_1^- &\mapsto \beta^{-1} e_1^-, & \quad e_2^- &\mapsto \alpha^{-1} e_2^-.
\end{aligned}\end{equation*}
By \eqref{triple.product.K}, it is clear that
$T_{\alpha,\beta} := (T^+_{\alpha,\beta}, T^-_{\alpha,\beta})$ is an automorphism of $\cV_K$.
Finally, fix $\lambda\in\FF$ and define $A_\lambda := (A^+_\lambda, A^-_\lambda)$ where 
\begin{equation} \label{autom_A_lambda}
A^\sigma_\lambda \colon \quad e_1 \mapsto e_1, \quad e_2 \mapsto e_2 + \sigma\lambda e_1,
\end{equation}
for $\sigma = \pm$. Then, using \eqref{triple.product.K}, it is straightforward to prove that $A_\lambda\in\Aut(\cV_K)$. For instance, we have that
\begin{align*}
\{ A^\sigma_\lambda (e_1), A^{-\sigma}_\lambda (e_2), A^\sigma_\lambda (e_1) \}
  &= \{ e_1, e_2 -\sigma \lambda e_1, e_1\} = \{e_1, e_2, e_1\} \\
	&= 2e_1 = A^\sigma_\lambda (2e_1) = A^\sigma_\lambda( \{e_1, e_2, e_1\} ), \\
\{ A^\sigma_\lambda (e_2), A^{-\sigma}_\lambda (e_2), A^\sigma_\lambda (e_2) \}
  &= \{ e_2 + \sigma \lambda e_1, e_2 - \sigma \lambda e_1, e_2 + \sigma \lambda e_1\} \\
	&= \{ \sigma \lambda e_1, e_2, e_2 + \sigma \lambda e_1\} +
	   \{ e_2, - \sigma \lambda e_1, e_2 + \sigma \lambda e_1\} \\
	&= -\sigma\lambda e_2 + 2\lambda^2 e_1 -2\sigma\lambda e_2 +\lambda^2 e_ 1 = 0 \\
	&=  A^\sigma_\lambda(0) = A^\sigma_\lambda (\{ e_2, e_2, e_2 \}),
\end{align*}
and the other cases are proven similarly. The maps of the form $\tau$, $T^+_{\alpha,\beta}$ and $A^+_\lambda$ act on the vector space $\cV^+_K$ as elementary matrices, and these generate a group isomorphic to $\GL_2(\FF)$; we can conclude that $\Psi$ is surjective, and therefore an isomorphism.
\end{proof}

\smallskip

\begin{theorem} \label{aut_Hurwitz_pair}
Let $C$ be a Hurwitz algebra and $\cV_C$ and $\cT_C$ the associated Kantor pair and triple system, respectively. Assume that either $\dim C \neq 2$ or $\chr\FF \neq 3$ (or both). Then, we have that
$$\Aut(\cV_C) = \langle c_\lambda, \Aut(\cT_C) \med\lambda\in\FF^\times \rangle \cong (\FF^\times \times \Aut(\cT_C))/C_2 $$
with $C_2 := \langle (-1, -\id) \rangle \cong \ZZ_2$. Consequently, each $\varphi\in\Aut(\cV_C)$ satisfies the property 
$$t(\varphi^+(x^+), \varphi^-(y^-)) = t(x^+,y^-)$$
for any $x^+\in\cV_C^+$, $y^-\in\cV_C^-$; equivalently, $(\varphi^+)^{-1} = \widehat{(\varphi^-)}$, where \, $\widehat{}$ \, denotes the dual relative to the bilinear trace.
\end{theorem}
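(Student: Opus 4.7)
The plan is to show that every $\varphi \in \Aut(\cV_C)$ decomposes as $c_\lambda\,\widetilde{\psi}$ for some $\lambda \in \FF^\times$ and $\psi \in \Aut(\cT_C)$, where $\widetilde\psi := (\psi, \psi)$ denotes the associated element of $\Aut(\cV_C)$. The decomposition will be obtained by reducing $\varphi$ to an automorphism that stabilizes $(1^+, 1^-)$ and then invoking Proposition~\ref{StabilizerIdentity}.

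First I would set $z := \varphi^+(1^+)$. Since $1^+$ has $1^-$ as conjugate inverse, so does $z$ (with $\varphi^-(1^-)$); Proposition~\ref{orbits.pair} then forces $\mu := n(z) \neq 0$. Choose $\lambda \in \FF^\times$ with $\lambda^2 = \mu$; the automorphism $c_{\lambda^{-1}}\varphi$ sends $1^+$ to $\lambda^{-1} z$, which has norm $1$. By Proposition~\ref{orbits.triple}, pick $\psi \in \Aut(\cT_C)$ with $\psi(\lambda^{-1}z)=1$, so that $\widetilde\psi\, c_{\lambda^{-1}}\,\varphi$ fixes $1^+$. Uniqueness of conjugate inverses (again Proposition~\ref{orbits.pair}) then forces $\widetilde\psi\, c_{\lambda^{-1}}\,\varphi$ to also fix $1^-$, and Proposition~\ref{StabilizerIdentity} gives $\widetilde\psi\, c_{\lambda^{-1}}\,\varphi \in \Aut(C,\inv) \subseteq \Aut(\cT_C)$; rearranging yields $\varphi \in c_\lambda \cdot \Aut(\cT_C)$, as desired.

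For the group-theoretic structure, I would note that $c_\lambda$ commutes with each $\widetilde\psi$, since both $c_\lambda \widetilde\psi$ and $\widetilde\psi c_\lambda$ act as $x \mapsto \lambda\psi(x)$ on $\cV_C^+$. Hence $\Phi \colon \FF^\times \times \Aut(\cT_C) \to \Aut(\cV_C)$, $(\lambda, \psi) \mapsto c_\lambda \widetilde\psi$, is a homomorphism, surjective by the previous step. If $c_\lambda\widetilde\psi = \id$, then $\psi$ acts as $\lambda^{-1}$ on $\cV_C^+$ and as $\lambda$ on $\cV_C^-$; but $\psi$ acts identically on both copies, so $\lambda = \lambda^{-1}$, giving $\lambda \in \{\pm 1\}$ and correspondingly $\psi = \pm\id$. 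Thus $\ker\Phi = \{(1, \id), (-1, -\id)\} = C_2$, yielding the claimed isomorphism.

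For the trace-invariance, it is enough to check the property on generators. For $c_\lambda$ it is immediate: $t(\lambda x, \lambda^{-1} y) = t(x, y)$. For $\widetilde\psi$ with $\psi \in \Aut(\cT_C)$, Theorem~\ref{th:schemestriple} provides $\psi \in \OO(C, n)$, so $\psi$ preserves the polar form of $n$, which coincides with the bilinear trace $t(\cdot,\cdot)$; the reformulation $(\varphi^+)^{-1} = \widehat{(\varphi^-)}$ is then the standard duality statement for a nondegenerate pairing. The main obstacle is the reduction to the stabilizer of $(1^+, 1^-)$: it hinges on combining the orbit classifications (Propositions~\ref{orbits.pair} and \ref{orbits.triple}) with the stabilizer identification (Proposition~\ref{StabilizerIdentity}), and one must verify the technical hypothesis of the latter in every case covered here, which is trivial for $\dim C = 1$, holds via $\chr\FF \neq 3$ for $\dim C = 2$, and requires a direct verification that $\cS(C,\inv)^2$ together with $\cH(C,\inv)$ generates $C$ when $\dim C = 4, 8$ even in characteristic~$3$.
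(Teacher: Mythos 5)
Your proof is correct and follows essentially the same route as the paper: reduce an arbitrary $\varphi\in\Aut(\cV_C)$ to the stabilizer of $(1^+,1^-)$ by composing with some $c_\lambda$ and an automorphism of $\cT_C$, invoke Proposition~\ref{StabilizerIdentity} together with uniqueness of the conjugate inverse of $1$, and then identify the kernel of $(\lambda,\psi)\mapsto c_\lambda\widetilde\psi$ as $C_2$. The only (inessential) difference is that you obtain the normalizing triple-system automorphism uniformly from the orbit classification in Proposition~\ref{orbits.triple}, whereas the paper constructs it explicitly case by case (via $L_{\bar z}L_a$ from the Cayley--Dickson doubling for $\dim C>2$, and the torus $f_\lambda$ for $\dim C=2$); your remark that the generation hypothesis of Proposition~\ref{StabilizerIdentity} must be checked for $\dim C=4,8$ in characteristic $3$ is a point the paper passes over silently, and it does hold since $\lspan(\cS^2)\supseteq[\cS,\cS]+\FF 1=C$ there.
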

\begin{proof}
First, consider the case with $\dim C > 2$. Fix $f = (f^+, f^-)\in\Aut(\cV_C)$. Recall that if $a\in C$ is traceless of norm 1, then $L_a \in \Aut(\cT_C) \subseteq \Aut(\cV_C)$. Notice that if $x\in\cV_C^+$ and $y\in\cV_C^-$ are conjugate inverses, then $g^+(x)$ and $g^-(y)$ are conjugate inverses for any automorphism $g$ of $\cV_C$. Since $1^+$ is invertible, $f^+(1^+)$ is invertible too, so that $\lambda \coloneqq n(f^+(1)) \neq 0$ (see Proposition~\ref{orbits.pair}). Without loss of generality, we can replace $f$ with $c_{1/\sqrt{\lambda}} f$ and assume that $n(f^+(1)) = 1$. Since $\dim C > 2$, the Cayley-Dickson doubling process shows that there is some traceless element $a\in C$ of norm $1$ such that $z \coloneqq L_a f^+(1)$ is traceless, and we also have $n(z) = 1$. Then, $L_{\bar z} L_a f^+(1) = 1$. Again, without loss of generality we can replace $f$ with $L_{\bar z} L_a f$ and assume that $f^+(1) = 1$. Since $f^-(1)$ must be a conjugate inverse of $1^+$ (which is unique by Remark~\ref{remark.conjugate.inverse}), we also have $f^-(1) = 1$. By Proposition~\ref{StabilizerIdentity}, it follows that $f := f^+ = f^- \in \Aut(C)$. Since the automorphisms $c_\lambda$ commute with any automorphism, it follows that $\Aut(\cV_C) = \langle c_\lambda, \Aut(\cT_C) \med\lambda\in\FF^\times \rangle$ and we have an epimorphism
\begin{equation} \label{isom_triple_to_pair} \begin{split}
\FF^\times \times \Aut(\cT_C) &\longrightarrow \Aut(\cV_C), \\
(\lambda, f) &\longmapsto c_\lambda f.
\end{split} \end{equation}
The kernel of the isomorphism in \eqref{isom_triple_to_pair} is $C_2$, which finishes the proof in this case.

Now, consider the case $\dim C = 2$. Fix $f = (f^+, f^-)\in\Aut(\cV_C)$. Again, composing $f$ with some automorphism of type $c_\lambda$ we can assume that $n(f^+(1)) = 1$. Then, there exists $\lambda\in\FF^\times$ such that $f^+(1) = \lambda^{-1} e_1 + \lambda e_2$, where $\{e_1, e_2\}$ is a Cartan basis of $C$. By \eqref{torus.f.lambda}, there is an automorphism $f_\lambda\in\Aut(\cT_C)$ such that $f_\lambda f(1^+) = 1^+$. Hence, replacing $f$ with $f_\lambda f$, we can assume that $f^+(1) = 1$. The same arguments used in the case above prove the result in this case (here we require $\chr\FF\neq 3$ in order to apply Proposition~\ref{StabilizerIdentity}).

Finally, for the case with $\dim C = 1$, the result is trivial because we have that
$\Aut(\cV_\FF) = \langle c_\lambda \med \lambda\in\FF^\times \rangle$ and $\Aut(\cT_\FF) = \langle c_\lambda \med \lambda = \pm 1\in\FF^\times \rangle$.

The last statement of the result follows from Theorem~\ref{th:schemestriple}, and because the automorphisms $c_\lambda$ behave well with the trace form.
\end{proof}

\smallskip

\begin{df}
Recall that the \textit{general orthogonal group} associated with an algebra $\cA$ with a norm form $n$ is the group of similarities of the norm, that is,
$$\mathrm{GO}(\cA, n):=\{ f\in \mathrm{GL}(\cA) \mid \exists \lambda \in \FF^{\times} \mbox{ such that }
n(f(x))=\lambda n(x) ~ \forall x\in \cA \} .$$
\end{df}

Next result classifies the automorphism groups for Kantor pairs of Hurwitz type.

\begin{corollary}\label{co:aut_pair}
Let $C$ be a Hurwitz algebra with either $\dim C \neq 2$ or $\chr\FF \neq 3$ (or both).\\
1) If $\dim C = 1$, then $\Aut(\cV_C) = \langle c_\lambda \med \lambda\in\FF^\times \rangle \cong \FF^\times$. \\
2) If $\dim C = 2$, then $\Aut(\cV_C) = GO(C, n) \cong (\FF^\times)^2 \rtimes \ZZ_2$. \\
3) If $\dim C = 4$, then $\Aut(\cV_C) \cong (\FF^\times \times O^+(C, n))/C_2$ with $C_2 \cong \ZZ_2$. \\
4) If $\dim C = 8$, then $\Aut(\cV_C) \cong \Gamma^+(\cS, n)$, where $\cS$ is the skew-symmetric subspace of $C$ and $\Gamma^+(\cS, n)$ denotes the associated even Clifford group.
\end{corollary}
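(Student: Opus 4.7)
The plan is to invoke Theorem~\ref{aut_Hurwitz_pair}, which gives the uniform description $\Aut(\cV_C) \cong (\FF^\times \times \Aut(\cT_C))/C_2$ with $C_2 = \langle(-1,-\id)\rangle$, and then substitute the description of $\Aut(\cT_C)$ from Corollary~\ref{co:aut_triple} and simplify the resulting quotient case by case.

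Cases~1 and~3 are essentially immediate. In case~1, $\Aut(\cT_\FF) = \{\pm\id\}$, so the quotient $(\FF^\times \times \ZZ_2)/\langle(-1,-1)\rangle$ collapses to $\FF^\times$ via $(\lambda,\epsilon) \mapsto \lambda\epsilon$. In case~3, the stated form $(\FF^\times \times O^+(C,n))/C_2$ is literally the one produced by Theorem~\ref{aut_Hurwitz_pair}.

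Case~2 requires an independent identification with $GO(C,n)$. Since $\varphi^-$ is the dual inverse of $\varphi^+$ with respect to the trace by Theorem~\ref{aut_Hurwitz_pair}, the assignment $\varphi \mapsto \varphi^+$ embeds $\Aut(\cV_C)$ into $\GL(C)$; and because every $\varphi^+$ has the form $c_\lambda \psi$ with $\psi \in \Aut(\cT_C) \leq O(C,n)$ by Theorem~\ref{th:schemestriple}, one gets $n(\varphi^+(x)) = \lambda^2 n(x)$, so the image lies in $GO(C,n)$. Conversely, a direct matrix computation on a Cartan basis $\{e_1,e_2\}$ (similitudes either fix or swap the two isotropic lines) shows $GO(C,n) = (\FF^\times)^2 \rtimes \ZZ_2$, and every such similitude is already realised by combinations of the scalar automorphisms $c_\lambda$, the diagonal automorphisms $f_\mu$ of \eqref{torus.f.lambda} (using that $\FF$ is algebraically closed to extract square roots), and the swap $\tau \in \Aut(C,\inv)$; hence $\Aut(\cV_C) = GO(C,n)$.

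Case~4 is the delicate one. Inside the even Clifford algebra $\Gamma^+(\cS,n) = \FF^\times \cdot \Spin(\cS,n)$ with $\FF^\times \cap \Spin(\cS,n) = \{\pm 1\}$ (using algebraic closure to write any even element of the Clifford group as a scalar times a spinor-norm-one element), which gives the abstract isomorphism
\[
\Gamma^+(\cS,n) \;\cong\; (\FF^\times \times \Spin(\cS,n))/\langle(-1,-1)\rangle.
\]
Matching this with the quotient from Theorem~\ref{aut_Hurwitz_pair} reduces to showing that under the isomorphism $\Aut(\cT_C) \cong \Spin(\cS,n)$ of \cite[Theorem~10]{Eld96}, the element $-\id \in \Aut(\cT_C)$ corresponds to the central element $-1 \in \Spin(\cS,n)$. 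This is the main obstacle, but it follows from a centre argument: since $\dim\cS = 7$, the centre of $\Spin(\cS,n)$ is exactly $\{\pm 1\}$, and $-\id$ is a non-identity central element of $\Aut(\cT_C)$, so it must map to $-1$. This completes the identification of the kernels and yields the stated isomorphism.
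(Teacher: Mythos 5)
Your proposal is correct and follows exactly the route of the paper, whose entire proof is the one-line citation ``Consequence of Theorem~\ref{aut_Hurwitz_pair} and Corollary~\ref{co:aut_triple}''; you have simply supplied the case-by-case verifications that the paper leaves implicit. In particular, your identification of $GO(C,n)$ in case~2 and your centre argument matching $-\id \in \Aut(\cT_C)$ with $-1 \in \Spin(\cS,n)$ in case~4 (using that $Z(\Spin_7)=\{\pm1\}$) are sound fillings of the gaps the authors chose not to spell out.
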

\begin{proof}
Consequence of Theorem~\ref{aut_Hurwitz_pair} and Corollary~\ref{co:aut_triple}.
\end{proof}

\smallskip

\begin{df}
Let $\cA$ be a graded algebra. Recall that a bilinear form $b:\cA\times \cA\rightarrow \FF$ is said to be \textit{homogeneous} if we have $g + h = 0$ whenever $b(\cA_g,\cA_h)\neq 0$. (Note that this definition is consistent with the fact that the only grading up to equivalence on $\FF$ is the trivial grading: $\FF_0 := \FF$.) The definition is analogous for a bilinear form on a graded Kantor triple system. On the other hand, given a graded Kantor pair $\cV$, a bilinear form $b:\cV^+ \times \cV^- \rightarrow \FF$ will be called \textit{homogeneous} if we have $g + h = 0$ whenever $b(\cV^+_g,\cV^-_h)\neq 0$.
\end{df}

The following Lemma can be regarded as an extension from the case of Hurwitz algebras (see \cite[proof of Proposition 4.10]{EKmon}) to the case of Kantor pairs and triple systems of Hurwitz type.

\begin{lemma}\label{le:homogeneous.trace}
Let $C$ be a Hurwitz algebra and consider its associated Kantor pair $\cV_C$ and triple system $\cT_C$.
Assume also that either $\dim C \neq 2$ or $\chr\FF \neq 3$.
Then, for any grading on $\cV_C$ (resp. $\cT_C$), the bilinear (resp. linear) trace of $C$ is homogeneous on $\cV_C$ (resp. $\cT_C$).
\end{lemma}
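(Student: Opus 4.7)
The plan is to use the scheme-theoretic interpretation of gradings together with the invariance of the relevant trace form under the whole automorphism group scheme.

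For the pair case, Theorem~\ref{aut_Hurwitz_pair} gives that every $\varphi = (\varphi^+, \varphi^-) \in \Aut(\cV_C)$ preserves the bilinear trace $t$, and by functoriality this invariance extends to all $R$-points of $\AAut(\cV_C)$ for every commutative associative unital $\FF$-algebra $R$. A $G$-grading on $\cV_C$ is equivalent to a morphism of affine group schemes $\rho \colon \mathbf{D}(G) \to \AAut(\cV_C)$, or equivalently to a pair of $\FF G$-comodule structures $\rho^\sigma \colon \cV_C^\sigma \to \cV_C^\sigma \otimes \FF G$ sending a homogeneous element $x^\sigma$ of degree $g$ to $x^\sigma \otimes g$. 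Applying the invariance of $t$ to the universal element over $R = \FF G$ and using commutativity of the group algebra, one obtains, for $x^+ \in \cV^+_{C, g}$ and $y^- \in \cV^-_{C, h}$, the identity
\[ t(x^+, y^-) \cdot (g+h) = t(x^+, y^-) \cdot 0 \]
inside $\FF G$ (with $G$ written additively). Since distinct elements of $G$ are $\FF$-linearly independent in $\FF G$, this forces $t(x^+, y^-) = 0$ whenever $g + h \neq 0$, which is the desired homogeneity.

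For the triple case, the analogous scheme-theoretic argument with Theorem~\ref{th:schemestriple} in place of Theorem~\ref{aut_Hurwitz_pair} shows that the polar form $n(-,-)$ of $C$ is homogeneous on $\cT_C$ for every grading: indeed, $\Aut(\cT_C) \leq \OO(C, n)$ preserves this bilinear form, and the same coaction argument applies verbatim. Since the linear trace of $C$ is $t(x) = n(x, 1)$, the homogeneity of the linear trace on $\cT_C$ reduces to showing that the identity $1$ lies in the degree-zero component of the grading, and this is where the hypothesis $\dim C \neq 2$ or $\chr\FF \neq 3$ should be used.

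The main obstacle is precisely this final step in the triple case, which requires a careful analysis using the tripotent identity $\{1, 1, 1\} = 1$ (which constrains the decomposition of $1$ into homogeneous components), the explicit structure of $\Aut(\cT_C)$ given in Corollary~\ref{co:aut_triple}, and the orbit classification in Proposition~\ref{orbits.triple}, in order to rule out nontrivial degree components of $1$ under the stated hypothesis and thereby complete the reduction.
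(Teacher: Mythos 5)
Your strategy is genuinely different from the paper's --- the paper normalizes a homogeneous element of nonzero trace pairing by a \emph{single} automorphism (via Proposition~\ref{orbits.pair} and the trace invariance in Theorem~\ref{aut_Hurwitz_pair}) and then reads off the degree relation from the homogeneous operator $U_x^{+}$ using $U_x(y)=2n(x,y)x-3n(x)y$ --- but as written your argument has two genuine gaps.

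First, in the pair case the step ``by functoriality this invariance extends to all $R$-points of $\AAut(\cV_C)$'' is not justified. Theorem~\ref{aut_Hurwitz_pair} is a statement about the abstract group $\Aut(\cV_C)=\AAut(\cV_C)(\FF)$, and a property verified on $\FF$-points of an affine group scheme does not pass to arbitrary $R$-points unless the scheme is known to be smooth (so that rational points are dense and the stabilizer of $t$, being closed, contains the whole scheme). This is precisely the situation where the distinction matters: a $G$-grading is a morphism $\mathbf{D}(G)\to\AAut(\cV_C)$, and $\mathbf{D}(G)$ has infinitesimal, hence $\FF$-point-invisible, subgroups whenever $G$ has $p$-torsion and $\chr\FF=p>0$ --- a case the paper allows ($p=3,5,\dots$). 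Closing this gap would require either proving smoothness of $\AAut(\cV_C)$ or establishing a scheme-level version of Theorem~\ref{aut_Hurwitz_pair} (whose proof uses orbit arguments that do not transfer verbatim to $R$-points); neither is available in the paper nor supplied by you. By contrast, your appeal to Theorem~\ref{th:schemestriple} for $\cT_C$ is sound, since the containment $\AAut(\cT_C)\leq\OO(C,n)$ is proved there for all $R$-points. The paper's element-wise computation sidesteps the whole issue.

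Second, the triple-system endgame is both unfinished and aimed at an unattainable target. You reduce the homogeneity of the linear trace to showing that $1$ lies in the degree-zero component and then label this ``the main obstacle'' without carrying it out; but that statement is false in general: for the Cayley--Dickson grading of Example~\ref{example.triple.cd} the unit is homogeneous of degree $(\bar 1,0)\neq 0$, and Proposition~\ref{regulargradstriples} only ever yields $2\deg(1)=0$, not $\deg(1)=0$. So the proposed reduction cannot be completed, and the hypothesis $\dim C\neq 2$ or $\chr\FF\neq 3$ is not what would rescue it. The paper instead handles the triple case at the outset by passing to the induced grading $(\Gamma,\Gamma)$ on $\cV_C$ and recovering the linear trace from the bilinear one via $t(x)=t(x,1)$, rather than through any degree-zero condition on the unit.
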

\begin{proof}
Recall that any grading $\Gamma$ on $\cT_C$ induces a grading $(\Gamma, \Gamma)$ on $\cV_C$. Also, the linear trace can be recovered from the bilinear trace via $t(x) = t(x, 1)$. Therefore, we only need to prove the result in the case of $\cV\coloneqq\cV_C$. Fix a $G$-grading $\Gamma$ on $\cV$ and let $x\in \cV^+_g$, $y\in \cV^-_h$ be such that $t(x, y) \neq 0$. We need to prove that $\deg^+(x) + \deg^-(y) = 0$. Without loss of generality, we can replace $y$ by $2 t(x, y)^{-1} y$ and assume that $t(x, y) = 2$. We will denote $\cH \coloneqq \cH(C, \inv)$ and $\cS \coloneqq \cS(C, \inv)$.

First consider the case that $x$ is in the orbit of rank $2$. Note that for any automorphism $f$, by Theorem~\ref{aut_Hurwitz_pair} we have that $t(f^+(x), f^-(y)) = t(x, y)$; hence, up to automorphism (by Proposition~\ref{orbits.pair}), we can assume that $x = 1^+$. It follows that $y = 1 + s$ with $s\in\cS$. In case $\chr\FF = 3$, we have that $U_1^+(s) = 0$, so that $U_1^+(y) = 1$ and taking degrees we get $\deg^+(x) + \deg^-(y) = 0$. Now consider the case $\chr\FF\neq 3$; thus $U_1$ is invertible. Since $U^+_1$ is invertible and homogeneous with $U^+_1(1^-) = 1^+$, it follows that $1^-$ is homogeneous and $\deg(1^+) + \deg(1^-) = 0$. On the other hand, $z^- := U^-_1 U^+_1(y) = U^-_{1^-}(1 - 3s) = 1 + 9s \in \cV^-_h$. Since $y = 1 + s$, $1$ and $1 + 9s$ are linearly dependent and homogeneous in $\cV^-$, they must have the same degree. Thus $\deg^-(y) = \deg^-(1) = -\deg^+(x)$.

Finally, consider the case where $x$ has rank $1$. This time, we can assume without loss of generality that $x = e_1^+$ where $e_1$ denotes the corresponding idempotent in a Cartan basis of $C$. From $t(x, y) = 1$ we deduce that $y = e_2 + z$ with $z \in (\FF e_1)^\perp = \lspan\{ e_1, u_i, v_i \med i = 1,2,3 \}$. But we also have that $\ker U^+_x = \lspan\{ e_1^-, u_i^-, v_i^- \med i =1,2,3 \}$. We conclude that $z \in \ker U_{e_1}^+$, so that $U_x^+(y) = U^+_{e_1}(e_2 + z) = 2 e_1^+ = 2 x$ and again $\deg^+(x) + \deg^-(y) = 0$.
\end{proof}

\section{Classification of fine gradings on Hurwitz Kantor systems} \label{section.classification.gradings}

In this section, we will first describe some important Peirce decompositions on Hurwitz pairs. Secondly, some examples of fine gradings by their universal groups on Kantor pairs and triple systems of Hurwitz type will be given. And finally, we will prove that any fine grading on a Kantor pair or triple system of Hurwitz type is equivalent to exactly one of those examples.

\subsection{Peirce decompositions on Hurwitz pairs}

\begin{df}
Given a Hurwitz algebra $C$ with $\dim C = 2^m > 1$, with two copies of the basis in \eqref{second.cartan.basis} we get a basis of $\cV_C$ that will be denoted $B_{\ZZ}(\cV_C)$ and called a \emph{Cartan basis} of $\cV_C$. Note that $B_{\ZZ}(\cV_C)$ consists of the idempotents of $\cV_C$ given by $e_g^\alpha \coloneqq (v_g^\alpha, v_g^{\omega\alpha})$ for $g\in\ZZ_2^{m-1}$, $\alpha\in\widehat{\ZZ}_2 = \langle \omega \rangle$.
\end{df}

\begin{proposition} \label{peirce.cartan}
Let $C$ be a Hurwitz algebra with $\dim C = 2^m > 1$. Then, the idempotents $e_g^{\alpha}$ of the Cartan basis of $\cV_C$ produce a simultaneous Peirce decomposition that is given by:
\begin{equation*} \begin{aligned}
\cV^\sigma_{1,1}(e_g^\alpha) &= \FF (e_g^\alpha)^\sigma, \; &
\cV^\sigma_{-\frac{1}{2},0}(e_g^\alpha) &= \FF (e_g^{\omega\alpha})^\sigma, \\
\cV^\sigma_{0,1}(e_g^\alpha) &= \bigoplus_{g\neq h\in\ZZ_2^{m-1}} \FF (e_h^\alpha)^\sigma, \; &
\cV^\sigma_{\frac{1}{2},\frac{1}{2}}(e_g^\alpha) &= \bigoplus_{g\neq h\in\ZZ_2^{m-1}} \FF (e_h^{\omega\alpha})^\sigma,
\end{aligned} \end{equation*}
for $\sigma = \pm$.
\end{proposition}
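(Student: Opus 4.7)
The plan is to compute the operators $L^\sigma(e^\alpha_g)$ and $R^\sigma(e^\alpha_g)$ acting on each Cartan basis element $(v^\beta_h)^\sigma$ of $\cV^\sigma_C$ by direct expansion of the triple product $\{x,y,z\} = (x\bar y)z + (z\bar y)x - (z\bar x)y$. The four claimed Peirce subspaces correspond to the four cases obtained by letting $h$ range over $\{g\}$ or $\ZZ_2^{m-1}\setminus\{g\}$ and $\beta$ over $\{\alpha,\omega\alpha\}$, so the proof reduces to verifying the correct pair of eigenvalues in each case.

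The first step is to derive a product formula for $v^\beta_g v^\alpha_h$ in $C$. Expanding $v^\beta_g = \frac{\sqrt{2}}{4}\sum_{a\in\ZZ_2}\beta(a)\sigma_{a,g}\bi^{|a|}x^a_g$ and applying \eqref{product.cd.by.cartan} to each term $x^a_g v^\alpha_h$, the character orthogonality relation $\sum_{a\in\ZZ_2}\chi(a) = 2\delta_{\chi,\mathds{1}}$ collapses the resulting sum and shows that $v^\beta_g v^\alpha_h$ is a specific (possibly zero) scalar multiple of $v^{\alpha\omega^{|g|}}_{g+h}$, nonzero precisely when a character condition relating $\beta$, $\alpha$, $\sigma_g$, $\sigma_h$ and $\omega$ is met. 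The second step is to describe the involution on the Cartan basis: from $\overline{x^a_h} = \sigma_{(h,a),(h,a)} x^a_h$ together with a short case analysis on $h \in \ZZ_2^{m-1}$, one finds $\overline{v^\alpha_0} = v^{\omega\alpha}_0$ (consistent with $\overline{e_1} = e_2$) and $\overline{v^\alpha_h} = -v^\alpha_h$ for $h \neq 0$ (consistent with $\overline{u_i} = -u_i$, $\overline{v_i} = -v_i$ in the Cartan basis).

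Combining these ingredients, I would evaluate $L^+(e^\alpha_g)(v^\beta_h) = \{v^\alpha_g, v^{\omega\alpha}_g, v^\beta_h\}$ and $R^+(e^\alpha_g)(v^\beta_h) = \{v^\beta_h, v^{\omega\alpha}_g, v^\alpha_g\}$ in each of the four cases. Because every product $v^\gamma_k v^\delta_\ell$ is a scalar multiple of a single $v^?_{k+\ell}$, each of the three summands of the triple product is itself a scalar multiple of $v^\beta_h$, and collecting coefficients gives the Peirce constant pairs $(1,1)$, $(-\tfrac{1}{2},0)$, $(0,1)$, $(\tfrac{1}{2},\tfrac{1}{2})$ respectively. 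The computation on $\cV^-_C$ is symmetric, swapping the roles of $\alpha$ and $\omega\alpha$.

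The main obstacle is the sign- and character-bookkeeping needed to assemble the product formula, made worse by the fact that the involution behaves differently at $g = 0$ and $g \neq 0$, forcing separate subcases. A useful sanity check is the direct $2$-dimensional verification: using $v^\mathds{1}_0 = e_1/\sqrt{2}$ and $v^\omega_0 = e_2/\sqrt{2}$, the Cartan-basis identity $\{e_1,e_2,e_2\} = -e_2$ immediately gives $L^+(e^\mathds{1}_0)(v^\omega_0) = -\tfrac{1}{2} v^\omega_0$ and $R^+(e^\mathds{1}_0)(v^\omega_0) = 0$, confirming membership of $v^\omega_0$ in $\cV^+_{-\frac{1}{2},0}(e^\mathds{1}_0)$ and serving as a template for the general case.
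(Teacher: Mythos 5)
Your proposal is correct in outline but takes a genuinely different route from the paper. You propose a frontal computation: derive a closed product formula $v^\beta_g v^\alpha_h \in \FF v^{\alpha\omega^{|g|}}_{g+h}$ from \eqref{product.cd.by.cartan} by character orthogonality, record the involution on the $v$'s, and then expand $L^\sigma(e^\alpha_g)$ and $R^\sigma(e^\alpha_g)$ on every basis vector in all four cases. The paper instead spends its effort on a symmetry reduction: the automorphism $f_\omega$ of the $\ZZ_2^3$-grading swaps $\FF e_g^\alpha \leftrightarrow \FF e_g^{\omega\alpha}$, and the left multiplications $L_{x_g}$ (automorphisms of $\cT_\cC$ by Lemma~\ref{lemma.automorphisms}, acting on the $v$'s by \eqref{product.cd.by.cartan}) permute the indices $g$ transitively; together these reduce everything to the single idempotent $e_0^{\mathds{1}} = \tfrac{1}{\sqrt{2}}(e_1, e_2)$, where the eigenvalues are read off directly from the Cartan multiplication table, and the cases $m=1,2$ follow by restriction. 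So your ``sanity check'' at $e^{\mathds{1}}_0$ is essentially the entirety of the paper's computation once the symmetry is in place; what you pay for avoiding the automorphism machinery is the full sign-and-character bookkeeping across all $(g,\alpha,h,\beta)$. One imprecision to watch in your write-up: it is not true that each of the three summands of the triple product is automatically a multiple of $v^\beta_h$ --- the product formula forces each summand $(xy)z$ into a single line $\FF v^\gamma_h$, but $\gamma$ can be $\omega\beta$ (this happens, e.g., for the middle summand of $L^+(e^{\mathds{1}}_{a_2})(v^{\mathds{1}}_0)$), and in those cases you must verify that the corresponding scalar vanishes rather than assume the character comes out right. This is a detail absorbed by the computation you are proposing, not a gap in the strategy, but it should be stated and checked explicitly.
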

\begin{proof}
Consider first the case with $m=3$, i.e., $C = \cC$.
The character $\omega\in\widehat{\ZZ}_2$ (as in Notation~\ref{NotationCartan}), identified with its natural extension $\omega \times \mathds{1} \in \widehat{\ZZ}_2 \times \widehat{\ZZ_2^2} \cong \widehat{\ZZ_2^3}$, produces an automorphism $f_\omega$ of the $\ZZ_2^3$-grading on $\cC$ that is given on the Cayley-Dickson basis via $f_\omega(x_g) = \omega(g)x_g$. It is easy to see that $f_\omega$ permutes the subspaces $\FF e_g^\alpha \leftrightarrow \FF e_g^{\alpha\omega}$ for each $g\in\ZZ_2^2$ and $\alpha\in\widehat{\ZZ}_2$. (Note that the automorphism $f_\omega$ of $\cC$ appears in \cite[Proof of Th.~4.17]{EKmon} with a different guise.) Also, recall from Lemma~\ref{lemma.automorphisms} that for each element $x_g$ of the Cayley-Dickson grading, $L_{x_g}$ defines an automorphism, which is given by \eqref{product.cd.by.cartan}. By the symmetries given by these automorphisms, it is clear that it suffices to prove the result for the case $e = e_0^\mathds{1}$, which is easy to calculate using \eqref{cartan.bases.relation} and the products of the Cartan basis $B_\ZZ(\cC)$ of $\cC$. For the cases $m=1,2$, the result follows by restriction to the corresponding subspaces.
\end{proof}

\begin{remark}
The left Peirce decompositions in Proposition~\ref{peirce.cartan} are produced by $D$-operators, which are derivations, so that we have gradings associated with them. Namely, we have a grading by $G = \frac{1}{2}\ZZ$ determined by $\cV^\sigma_{\sigma\lambda} := \cV^\sigma_{\lambda, \mu}(e^\alpha_g)$, and therefore, by composition with the group isomorphism $\frac{1}{2}\ZZ \to \ZZ$, $\frac{1}{2} \mapsto 1$, we get a $\ZZ$-grading with homogeneous subspaces given by
\begin{equation} \label{peirceZgrad}
\cV^\sigma_{\sigma 2\lambda} := \cV^\sigma_{\lambda, \mu}(e^\alpha_g).
\end{equation}
Also, note that the grading in \eqref{peirceZgrad} can be properly refined with a shift on the degree, or just combining it with the $\ZZ$-grading given by $\cV^\sigma_{\sigma 1} = \cV^\sigma$.
\end{remark}

\subsection{Examples of fine gradings on Hurwitz Kantor systems}

\begin{example}\label{examples.dim.one}
For the $1$-dimensional Hurwitz algebra $C=\FF$, it is clear that up to equivalence we have only one fine grading on $\cV_C$; its universal group is isomorphic to $\ZZ$, and its universal degree is equivalent to $\deg (1^+) = -\deg(1^-) = 1$.
Similarly, it is easy to see that up to equivalence there is only one fine grading on $\cT_C$, whose universal group is $\ZZ_2$, and its associated universal degree is equivalent to $\deg (1) = \bar{1}$.
\end{example}

\begin{example} \label{example.pair.cd}
Let $C$ be a Hurwitz algebra of dimension $2^m$ for some $m\in \{1,2,3\}$, where we also require to exclude the case with $\chr\FF = 3$ and $\dim C = 2$ (the grading obtained in that case with the construction below is not given by its universal group, and is actually equivalent to a grading given in Example~\ref{example.pair.cartan}).
Set $G = \ZZ_2^m$ and take a Cayley-Dickson basis $B_\CD(C) = \{ x_g \}_{g\in G}$ of $C$.
There is a grading $\Gamma_\CD(\cV_C)$ by the group $\ZZ \times G = \ZZ \times \ZZ_2^m$ on the Kantor pair $\cV_C$, where $B_\CD(C)$ is a homogeneous basis in $\cV_C^\sigma$ for $\sigma = \pm$, and that is given by
\begin{equation}
\deg(x_g^\sigma) \coloneqq (\sigma 1, g)
\end{equation}
for $\sigma = \pm$. The grading $\Gamma_\CD(\cV_C)$ will be called the \emph{Cayley-Dickson grading} on $\cV_C$.
\end{example}

\begin{example} \label{example.triple.cd}
Let $C$, $G$ and $B_\CD(C)$ be as in Example~\ref{example.pair.cd}.
Then, we have a grading $\Gamma_\CD(\cT_C)$ by the group $\ZZ_2 \times G = \ZZ_2^{m+1}$ on the Kantor triple system $\cT_C$, with homogeneous basis $B_\CD(C)$, and determined by
\begin{equation}
\deg(x_g) \coloneqq (\bar 1, g).
\end{equation}
We will refer to $\Gamma_\CD(\cT_C)$ as the \emph{Cayley-Dickson grading} on $\cT_C$.
\end{example}

\begin{example} \label{example.pair.cartan}
Now we will define a grading on the Cayley pair $\cV_\cC$ by the group $\ZZ^4$ using two copies of a Cartan basis of $\cC$. Define the following map:
\begin{equation} \begin{aligned}
\deg(e_1^+) &= -\deg(e_2^-) := (1,0,0,0), \\
\deg(e_2^+) &= -\deg(e_1^-) := (0,1,0,0), \\
\deg(u_1^+) &= -\deg(v_1^-) := (0,0,1,0), \\
\deg(u_2^+) &= -\deg(v_2^-) := (0,0,0,1), \\
\deg(u_3^+) &= -\deg(v_3^-) := (1,2,-1,-1), \\
\deg(v_1^+) &= -\deg(u_1^-) := (1,1,-1,0), \\
\deg(v_2^+) &= -\deg(u_2^-) := (1,1,0,-1), \\
\deg(v_3^+) &= -\deg(u_3^-) := (0,-1,1,1).
\end{aligned} \end{equation}
Note that the second coordinate of $\deg$ coincides with the $\ZZ$-grading in \eqref{peirceZgrad} associated with the idempotent $e^{\charTrivial}_{a_2+a_3} = \frac{-1}{\sqrt{2}}(u_3, v_3)$ of $\cV_\cC$. On the other hand, the last two coordinates of $\deg$ coincide with the Cartan grading on $\cC$ given in \eqref{cartan.degree.C}, which extends to a grading on $\cV_\cC$. Furthermore, the sum of all coordinates of $\deg$ is the $\ZZ$-grading given by $\cV^\sigma_{\sigma 1} :=\cV^\sigma$, and since a linear combination of compatible gradings defines a grading, it follows that the first coordinate of $\deg$ is also a grading on $\cV_\cC$. Consequently, $\deg$ defines a grading, which will be referred to as the \emph{Cartan grading} on $\cV_\cC$. Analogous Cartan gradings are defined for Hurwitz pairs of dimensions $2$ and $4$ by restriction of the same degree map to the bases $\{e_1, e_2\}$ and $\{e_1, e_2, u_1, v_1\}$, where the grading groups are $\ZZ^2$ and $\ZZ^3$, respectively. It is obvious that the associated Cartan basis of $\cV_C$ is homogeneous. Given a Hurwitz algebra $C$ with $\dim C > 1$, the Cartan grading on $\cV_C$ will be denoted by $\Gamma_{\ZZ}(\cV_C)$.
\end{example}

\begin{example} \label{example.triple.cartan}
Consider the Cartan grading on $\cV_\cC$. If we force the relation $\deg(x^+) = \deg(x^-)$ for $x$ in the Cartan basis of $\cC$, we obtain a grading by $\ZZ^3$ on $\cV_\cC$ which also restricts to a grading on $\cT_\cC$. This grading will be called the \emph{Cartan grading} on $\cT_\cC$ and denoted by $\Gamma_\ZZ(\cT_\cC)$, and its degree map is given by:
\begin{equation} \begin{aligned}
\deg(e_1) &= -\deg(e_2) := (1,0,0), \\
\deg(u_1) &= -\deg(v_1) := (0,1,0), \\
\deg(u_2) &= -\deg(v_2) := (0,0,1), \\
\deg(u_3) &= -\deg(v_3) := (-1,-1,-1).
\end{aligned} \end{equation}
For a Hurwitz algebra $C$ of dimension 2 or 4, we similarly define its Cartan grading $\Gamma_\ZZ(\cT_C)$ by restriction of $\deg$ to the corresponding basis $\{e_1, e_2\}$ or $\{e_1, e_2, u_1, v_2\}$; in these two cases, the grading groups are $\ZZ$ and $\ZZ^2$, respectively.
\end{example}

\begin{proposition}
The gradings given in Examples \ref{examples.dim.one}, \ref{example.pair.cd}, \ref{example.triple.cd}, \ref{example.pair.cartan} and \ref{example.triple.cartan} are fine and given by their universal groups and their universal degrees.
\end{proposition}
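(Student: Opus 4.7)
The plan is to verify two things for each listed grading $\Gamma$: that $\Gamma$ is fine, and that the displayed grading group realizes $\Univ(\Gamma)$ with its universal degree. Fineness follows by direct inspection: in each of Examples \ref{examples.dim.one}, \ref{example.pair.cd}, \ref{example.triple.cd}, \ref{example.pair.cartan} and \ref{example.triple.cartan} every homogeneous component of $\cV_C$ (or $\cT_C$) is one-dimensional, spanned by a single Cayley--Dickson or Cartan basis element, so no proper refinement is possible.

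For universality, Example \ref{examples.dim.one} is handled by a direct relations calculation: in $\cV_\FF$ the only nontrivial relation $\{1^+,1^-,1^+\}=-1^+$ reduces to $1+(-1)+1=1$ in $\ZZ$, giving $\Univ(\Gamma)=\ZZ$; in $\cT_\FF$ the relation from $\{1,1,1\}=1$ forces $2\bar 1=0$, giving $\Univ(\Gamma)=\ZZ_2$. For the Cayley--Dickson Examples \ref{example.pair.cd} and \ref{example.triple.cd} the unity $1=x_0$ is homogeneous, so Propositions \ref{regulargradspairs} and \ref{regulargradstriples} apply (the additional $\chr\FF=3$ hypotheses are verified directly for Hurwitz algebras outside the excluded binarion case: the commutative-skew center is zero, and $C$ is generated by $\cH(C,\inv)\cup\cS(C,\inv)^2$). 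The $g$-shift restricts on $C$ to the standard Cayley--Dickson grading, whose universal group is well known to be $\ZZ_2^m$ (cf.\ \cite[Section 4]{EKmon}). Combined with fineness, the cited propositions then yield $\Univ(\Gamma)\cong\ZZ_2^m\times\ZZ$ for the pair and $\ZZ_2^m\times\ZZ_2\cong\ZZ_2^{m+1}$ for the triple, with universal degrees matching the displayed ones.

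The delicate case is the Cartan grading (Examples \ref{example.pair.cartan} and \ref{example.triple.cartan}), where $1=e_1+e_2$ is \emph{not} homogeneous, so Propositions \ref{regulargradspairs} and \ref{regulargradstriples} do not apply. My plan here is to argue via affine group schemes: the displayed $\ZZ^r$-grading corresponds to a morphism $T=(\FF^\times)^r\to\AAut(\cV_C)$ (respectively $\AAut(\cT_C)$) sending $t=(t_1,\dots,t_r)$ to the automorphism acting on a homogeneous vector of degree $(g_1,\dots,g_r)$ by the scalar $t_1^{g_1}\cdots t_r^{g_r}$. This morphism is faithful because the support contains each standard basis vector of $\ZZ^r$ (immediate from the degree tables in the examples). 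Using Corollaries \ref{co:aut_pair} and \ref{co:aut_triple}, supplemented by Theorem \ref{th.aut.V_K.char3} for the $\chr\FF=3$, $\dim C=2$ subcase, the identity component of each automorphism group scheme is a connected reductive algebraic group whose maximal torus has dimension exactly $r$ in every listed case ($r=2,3,4$ for $\cV_C$ and $r=1,2,3$ for $\cT_C$ as $\dim C=2,4,8$). Hence $T$ embeds as a maximal torus of the identity component; since a maximal torus of a connected reductive group scheme is its own centralizer, no strictly larger diagonalizable subgroup scheme contains its image, noting also that extra components outside the identity in these automorphism groups do not commute with the full torus. Translating back through the standard correspondence between abelian group gradings and diagonalizable subgroup schemes yields that $\Gamma$ is fine and $\Univ(\Gamma)$ is the character group $\ZZ^r$ of $T$, with the given universal degree.

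The main obstacle is the Cartan case: rigorously passing from the identification of the torus image as a maximal torus of the reductive identity component to the statement that $\ZZ^r$ is the universal group. This relies on the explicit automorphism group identifications in Corollaries \ref{co:aut_pair}--\ref{co:aut_triple} and Theorem \ref{th.aut.V_K.char3}, together with the affine-group-scheme/grading dictionary.
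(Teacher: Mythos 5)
Your treatment of fineness, of the one-dimensional systems, and of the Cayley--Dickson gradings coincides with the paper's: one-dimensional homogeneous components give fineness, and Propositions~\ref{regulargradspairs} and~\ref{regulargradstriples} (whose characteristic-$3$ hypotheses you rightly check) reduce the universal-group computation to the known $\ZZ_2^m$-grading on $C$. Where you genuinely diverge is the Cartan case. The paper's argument is elementary and combinatorial: Lemma~\ref{le:homogeneous.trace} (homogeneity of the bilinear trace) forces $\deg(e_1^\sigma)+\deg(e_2^{-\sigma})=0$ and $\deg(u_i^\sigma)+\deg(v_i^{-\sigma})=0$, a handful of explicit nonzero triple products then express every degree in terms of $a=\deg(e_1^+)$, $b=\deg(e_2^+)$, $c=\deg(u_1^+)$, $d=\deg(u_2^+)$, and the universal property supplies an epimorphism $\Univ(\Gamma)\to\ZZ^4$ sending these four generators to a basis, which is therefore an isomorphism (with the extra relation $a+b=0$ in the triple-system case giving $\ZZ^3$). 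Your route instead realizes the grading as the eigenspace decomposition of a rank-$r$ torus and argues it is a maximal diagonalizable subgroup scheme of $\AAut$. That is a legitimate and more conceptual strategy, and it explains \emph{why} these are the Cartan gradings; the paper's computation is longer to set up (it needs Lemma~\ref{le:homogeneous.trace}) but is self-contained and characteristic-independent.

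There is, however, a concrete gap in your Cartan argument as written. The grading/quasi-torus dictionary takes place in the automorphism group \emph{scheme} $\AAut(\cV_C)$ (resp.\ $\AAut(\cT_C)$), whereas Corollaries~\ref{co:aut_pair}, \ref{co:aut_triple} and Theorem~\ref{th.aut.V_K.char3} only identify the groups of $\FF$-points $\Aut(\cV_C)$, $\Aut(\cT_C)$. To conclude that no diagonalizable subgroup scheme strictly contains $T$ you need the \emph{scheme-theoretic} centralizer $C_{\AAut}(T)$ to equal $T$; the statement ``a maximal torus of a connected reductive group is its own centralizer'' only handles the smooth connected part. If $\AAut$ fails to be smooth (a genuine worry in characteristic $3$, e.g.\ for $\cV_K$ where already the $\FF$-points are exceptional), the centralizer could contain an infinitesimal factor such as $\mu_3$, which would correspond to a proper refinement of $\Gamma$ by $\ZZ^r\times\ZZ_3$ invisible at the level of points. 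You would need to either prove smoothness (say by checking $\dim\Der(\cV_C)=\dim\Aut(\cV_C)$ case by case) or compute the weight-zero part of $\Der(\cV_C)$ under $T$ directly; neither is done, and the paper supplies no such input. The subsidiary claim that no element outside the identity component centralizes the full torus is also only asserted, though it is easy to verify in the one case ($\dim C=2$, the swap $e_1\leftrightarrow e_2$) where a non-identity component exists. Until these points are filled in, the Cartan half of your proof is incomplete, even though its conclusion is correct.
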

\begin{proof}
It is clear that all these gradings are fine because their homogeneous components are $1$-dimensional. For the $1$-dimensional cases, the result is trivial. For the Cayley-Dickson gradings, the result follows from Propositions \ref{regulargradspairs} and \ref{regulargradstriples} (note that the Kantor pair case with $\dim C = 2$ and $\chr\FF = 3$ is excluded, hence the requirements to apply Proposition~\ref{regulargradspairs} hold).

Now consider the Cartan grading on the Cayley pair and let $G$ and $\deg$ denote its universal group and degree, respectively. By Lemma~\ref{le:homogeneous.trace}, the bilinear trace is homogeneous, from where it follows that $\deg$ is determined by its restriction to $\cV^+_\cC$ (or $\cV^-_\cC$), and also that the following relations hold: $\deg(e_1^\sigma) + \deg(e_2^{-\sigma}) = 0$ and $\deg(u_i^\sigma) + \deg(v_i^{-\sigma}) = 0$ for $i\in\{1,2,3\}$ and $\sigma=\pm$. Denote $a := \deg(e_1^+)$, $b := \deg(e_2^+)$, $c := \deg(u_1^+)$, $d := \deg(u_2^+)$. From the relations $0\neq \{e_1^+, u_1^-, e_2^+\}\in\FF u_1^+$, $0\neq \{e_1^+, u_2^-, e_2^+\}\in\FF u_2^+$, $0\neq \{e_1^+, u_1^-, u_2^+\}\in\FF v_3^+$ and $0\neq \{e_1, u_2, e_2\}\in\FF u_2$ we get that $\deg(v_1^+) = a+b-c$, $\deg(v_2^+) = a+b-d$, $\deg(v_3^+) = -b+c+d$ and $\deg(u_3^+) = a+2b-c-d$. Since $G$ is generated by its support, it is also generated by $\{a,b,c,d\}$. By the universal property, the map $f\colon G \to \ZZ^4$ sending $\{a,b,c,d\}$ to the canonical basis of $\ZZ^4$ is a group epimorphism inducing the Cartan grading; this also implies that $f$ is actually a group isomorphism and that the Cartan grading is given by its universal degree. (Note that this proof avoids computing the triple product for all the $8^3$ possible cases to check all relations between generators.) For the Cartan gradings on Hurwitz pairs of dimensions $2$ and $4$, the result follows from the same arguments.

Finally, consider the Cartan grading on the Cayley triple system $\cT_\cC$; let $H$ and $\deg$ denote its universal group and degree. Note that $H$ is defined exactly by the same relations as the universal group $G$ of the Cartan grading on $\cV_\cC$ and the additional relation given by $\deg(x^+) = \deg(x^-)$. It is easy to see that the last relation is equivalent to $a + b = 0$, from where it follows that $H \cong \ZZ^3$ and that the Cartan grading on $\cT_\cC$ is given by its universal degree. Again, the same arguments hold for Hurwitz triple systems of dimensions $2$ and $4$.
\end{proof}

\subsection{Classification of fine gradings on Hurwitz Kantor pairs}

\begin{remark}
Recall that in the case where $\dim C=1$ we have only one fine grading on $\cV_C$ (see Example~\ref{examples.dim.one}). So it remains to deal with the classification in the cases with $\dim C > 1$.
\end{remark}

\begin{theorem}\label{th:finegradspairs}
Let $\Gamma$ be a fine grading on $\cV_C$ and $\dim C\geq 2$. Then:
\begin{itemize}
\item[$1)$] If $\chr\FF = 3$ and $\dim C = 2$, then $\Gamma$ is, up to equivalence, the Cartan grading.
\item[$2)$] If either $\chr\FF \neq 3$ or $\dim C \neq 2$, then $\Gamma$ is, up to equivalence, the Cayley-Dickson grading or the Cartan grading.
\end{itemize}
\end{theorem}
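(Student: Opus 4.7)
The plan is to split the argument into two principal cases according to whether some nonzero homogeneous component of $\Gamma^+$ contains an invertible element (a rank-$2$ element in the sense of Proposition~\ref{orbits.pair}), and to handle the exceptional case~$(1)$ of the theorem separately via Theorem~\ref{th.aut.V_K.char3}.

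First, under the hypotheses of case~$(2)$, suppose that some homogeneous component of $\Gamma^+$ is spanned by an invertible element $x$. By Proposition~\ref{orbits.pair} there is an automorphism of $\cV_C$ mapping $1^+$ to $x$; replacing $\Gamma$ by its pullback we may assume $1^+$ is homogeneous. The hypotheses of Proposition~\ref{regulargradspairs} are then satisfied (for $\dim C > 2$ because $Z_c(C)\cap\cS(C,\inv) = 0$ and $C$ is generated by $\cH\cup\cS^2$; for $\dim C = 2$ we have $\chr\FF\neq 3$ in case~$(2)$), so it produces a fine grading $\Gamma_C$ on the Hurwitz algebra $C$ with $\Univ(\Gamma)\cong\Univ(\Gamma_C)\times\ZZ$ and universal degree $\deg_\Gamma(x^\sigma) = (\deg_C(x),\sigma 1)$. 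Appealing to the classification of fine gradings on Hurwitz algebras, I would rule out the Cartan option: there $\deg_C(e_1) = \deg_C(e_2) = 0$, so the extension places $\FF e_1^+ \oplus \FF e_2^+$ inside a single two-dimensional component, which admits a proper refinement (for instance the Cartan grading on $\cV_C$), contradicting fineness of $\Gamma$. Hence $\Gamma_C$ must be Cayley-Dickson and $\Gamma$ is equivalent to $\Gamma_\CD(\cV_C)$.

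In the complementary subcase (still in case~$(2)$) no homogeneous component of $\Gamma^+$ contains an invertible element, so every nonzero homogeneous element of $\cV^+$ is isotropic. Taking $0 \neq x^+ \in \cV^+_g$ and using Proposition~\ref{orbits.pair}, we may assume after an equivalence that $x^+ = e_1^+$ for some Cartan basis of $C$. Lemma~\ref{le:homogeneous.trace} together with $t(e_1,e_2) = 1$ forces $e_2^-$ to be homogeneous of degree $-g$, so $(e_1^+,\tfrac{1}{2}e_2^-)$ is a homogeneous idempotent. Proposition~\ref{peirce.cartan} then describes its Peirce decomposition, whose subspaces are eigenspaces of a homogeneous $D$-operator and hence graded; the one-dimensional Peirce components $\FF e_2^+$ and $\FF e_1^-$ immediately give that $e_2^+,e_1^-$ are also homogeneous. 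For $\dim C = 2$ this already produces the Cartan grading, and for $\dim C = 4$ the remaining Peirce subspaces are one-dimensional, again yielding the Cartan grading. For $\dim C = 8$ I would iterate with additional Cartan idempotents $e^\alpha_g$ and exploit the automorphisms $L_a$ from Lemma~\ref{lemma.automorphisms} whose action on the Cartan idempotents is governed by \eqref{product.cd.by.cartan}, to conclude that every element of the Cartan basis $B_\ZZ(\cV_C)$ becomes homogeneous and thus $\Gamma$ is equivalent to $\Gamma_\ZZ(\cV_C)$.

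Finally, for case~$(1)$ of the theorem, Theorem~\ref{th.aut.V_K.char3} identifies $\Aut(\cV_K) \cong \GL_2(\FF)$, acting on $\cV_K^+$ through the standard representation. Since $\FF$ is algebraically closed, all maximal quasi-tori of $\GL_2(\FF)$ are conjugate to the standard diagonal torus, whose weight decomposition on $\FF e_1^+ \oplus \FF e_2^+$ is precisely the Cartan grading $\Gamma_\ZZ(\cV_K)$, and no other fine gradings arise. The main obstacle I anticipate is completing the octonion case in the second subcase above, where the non-trivial Peirce subspaces from a single idempotent have dimension greater than one: propagating homogeneity from $e_1^+$ through the entire Cartan basis requires a careful orchestration of multiple Peirce decompositions and stabilizer-subgroup actions, and it may ultimately be cleaner to replace this step by a direct classification of the maximal quasi-tori in the even Clifford group $\Gamma^+(\cS, n)$ appearing in Corollary~\ref{co:aut_pair}(4).
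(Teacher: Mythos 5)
Your first subcase of case~(2) (a homogeneous invertible element exists) is exactly the paper's argument, including the reason the Cartan option for $\Gamma_C$ is excluded. Your treatment of case~(1) reaches the right conclusion by a different route: the paper does not classify maximal quasi-tori of $\GL_2(\FF)$ but simply uses transitivity of $\Aut(\cV_K)\cong\GL_2(\FF)$ on bases of $\cV_K^+$ to move a homogeneous basis onto the Cartan basis, then observes that $\ker U_{e_i}^+=\FF e_i^-$ is graded; this is more elementary and, importantly, avoids the scheme-theoretic issue that Theorem~\ref{th.aut.V_K.char3} only identifies the group of $\FF$-points of $\AAut(\cV_K)$, whereas the grading/quasi-torus correspondence lives at the level of the group scheme.

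The genuine gap is in the all-isotropic subcase. First, the step ``Lemma~\ref{le:homogeneous.trace} together with $t(e_1,e_2)=1$ forces $e_2^-$ to be homogeneous of degree $-g$'' is false: homogeneity of the trace only says that $t(\cV_g^+,\cV_h^-)=0$ unless $g+h=0$. It guarantees the existence of \emph{some} homogeneous $z\in\cV^-_{-g}$ with $t(e_1,z)\neq 0$, but $e_2^-$ itself may decompose into several homogeneous pieces of which only one pairs nontrivially with $e_1^+$; so you do not obtain the homogeneous idempotent $(e_1^+,\tfrac12 e_2^-)$ on which your entire Peirce argument rests. Second, you explicitly leave the octonion case open. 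The paper resolves both problems at once by a different opening move: since $n$ is nondegenerate and the homogeneous elements span $C$, there exist \emph{two} homogeneous elements $x,y\in\cV^+$ with $n(x,y)\neq 0$, hence $n(x+y)\neq 0$; after an automorphism $x+y=1^+$, and since $t(x)+t(y)=2$ and $x,y$ are isotropic one shows $x=e_1$, $y=e_2$ for a Cartan basis, so both idempotents are homogeneous \emph{in $\cV^+$} from the start. From there the graded subspaces $\ker U_{e_1^+,e_2^+}=\FF e_1\oplus\FF e_2$ and $\ker U_{e_i^+}=\FF e_i\oplus U\oplus V$ (with $U=e_1Ce_2$, $V=e_2Ce_1$), together with the homogeneous operator $u+v\mapsto\{e_1^+,u+v,e_2^+\}=-(u+2v)$ applied twice, separate $U$ and $V$ into graded subspaces; choosing a homogeneous basis $\{u_i\}$ of $U$ with $n(u_1u_2,u_3)=1$ and taking the dual basis $\{v_i\}=\{u_2u_3,u_3u_1,u_1u_2\}$ then yields a homogeneous Cartan basis in all dimensions, with no iteration over further idempotents and no appeal to quasi-tori of $\Gamma^+(\cS,n)$. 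As written, your proposal does not close the case $\dim C=8$, and the $e_2^-$ step needs to be repaired even for $\dim C=2,4$.
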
 
\begin{proof}
Let $\Gamma$ be a fine $G$-grading on $\cV_C$. We will deal first with the case 1), so assume that $\chr\FF = 3$ and $\dim C = 2$.
By Theorem~\ref{th.aut.V_K.char3}, the group $\Aut(\cV_C) \cong \GL_2(\FF)$ acts transitively on the set of bases of $\cV_C^+$, so that by applying an automorphism we can assume that a Cartan basis $\{e_1, e_2\}$ of $C$ is homogeneous in $\cV_C^+$. Then, the subspaces $\ker U_{e_i}^+ = \FF e_i^-$, for $i = 1,2$, are graded, so that the Cartan basis is also homogeneous in $\cV_C^-$. Consequently, since $\Gamma$ is fine, it must be equivalent to the Cartan grading.

From now on, consider the case with either $\chr\FF \neq 3$ or $\dim C \neq 2$.
Assume that $\dim C = 8$; since the cases with $\dim C = 2,4$ are anologous, we will omit the details.
Consider the case where there exists a homogeneous element $x^+\in \cV_C^+$ of rank $2$ (for the case $x^-\in \cV_C^-$, the proof is analogous). By applying an automorphism we can assume that $x^+ = 1^+$. Take $g = -\deg(1^+)$. By Proposition~\ref{regulargradspairs}, the shifted grading $\Gamma^{[g]}$ restricts to a grading $\Gamma_C$ on $C = \cV_C^+$. Notice that $(\Gamma^{[g]})^+$ and $(\Gamma^{[g]})^-$ have the same homogeneous components, although the degrees are not necessarily the same. Since $\Gamma$ is fine, $\Gamma_C$ is fine on $C$. Then $\Gamma_C$ cannot be equivalent to the Cartan grading on $C$ because its extension admits a proper refinement on $\cV_C$ (namely, the Cartan grading on $\cV_C$). Thus $\Gamma_C$ must be equivalent to the Cayley-Dickson grading on $C$, and consequently, $\Gamma$ is equivalent to the Cayley-Dickson grading on $\cV_C$.

\smallskip

Now consider the case where there are no homogeneous elements of rank $2$, that is, all homogeneous elements have rank $1$ (which forces them to be isotropic). Since $n$ is nondegenerate, we can take two homogeneous elements $x,y\in \cV^+_C$ such that $n(x, y) \neq 0$. Since $n(x,y) = n(x + y) - n(x) - n(y) = n(x + y)$, we also have that $n(x + y) \neq 0$. Up to automorphism (see Proposition~\ref{orbits.pair}), we can assume that $x + y = 1^+$. Then $2 = t(1^+)= t(x) + t(y)$, and so we have either $t(x)\neq 0$ or $t(y)\neq 0$. Without loss of generality, consider the case $t(x)\neq 0$. Since $x$ is isotropic, we have $x^2 = \lambda x$ for $\lambda = t(x)\neq 0$ and $e_1:= \lambda^{-1} x$ is an isotropic idempotent. Using the same arguments as in \cite[Chapter 4]{EKmon}, we can complete $e_1$ to a Cartan basis where $e_2: = 1 - e_1$ is an isotropic idempotent. Since $y = 1 - x = (e_1 + e_2) - \lambda e_1 = (1 - \lambda)e_1 + e_2$ is isotropic it follows that $\lambda = 1$. Therefore $x = e_1$ and $y = e_2$ are homogeneous in $\cV_C^+$.

Notice that $U_{e_1^+, e_2^+}$ and $U_{e_i^+}$, for $i=1,2$, are homogeneous maps; hence we have that $\mathrm{Ker} (U_{e_1^+, e_2^+})= \FF e_1 \oplus \FF e_2$ and $\mathrm{Ker} (U_{e_i^+})=\FF e_i \oplus U \oplus V$ are graded subspaces in $\cV^-$ for $i = 1,2$, where $U := \text{span}\{ u_i \med i = 1,2,3 \}$ and $V := \text{span}\{ v_i \med i = 1,2,3 \}$. Intersecting these subspaces we get that $U\oplus V$, $\FF e_1$ and $\FF e_2$ are graded subspaces in $\cV^-$. Analogously, $U\oplus V$ is graded in $\cV^+$. Now we will prove that $U$ and $V$ are graded subspaces in $\cV^{\sigma}$ for $\sigma = \pm$. Let $\deg(e_i^+) = g_i$ for $i = 1,2$ and notice that, since the trace is homogeneous (Lemma~\ref{le:homogeneous.trace}), we have $\deg(e_1^-) = -g_2$ and $\deg(e_2^-) = -g_1$. Take $0\neq u + v \in (U\oplus V)^-_g$ for some $g \in G$, with $u\in U$, $v\in V$. Then $\{ e_1^+, u+v,e_2^+ \} = -(u+2v) \in (U \oplus V)^+_{g+g_1+g_2}$ and $\{ e_1^-, u+2v,e_2^-\} = -(u+4v) \in (U \oplus V)^-_{g}$. It follows that $v$ and $u$ are homogeneous of degree $g$ in $\cV_C^-$. Therefore $U$ and $V$ are graded in $\cV^+$, and similarly in $\cV^-$. If $0\neq u\in U^-_g$ for some $g\in G$, then $\{ e_1^+, u,e_2^+ \} = -u \in U^+_{g_1+g_2+g}$, and it follows that the homogeneous components of $U^+$ and $U^-$ coincide. Similarly, the homogeneous components of $V^+$ and $V^-$ coincide. 

Following again \cite[Chapter 4]{EKmon}, we will now construct a homogeneous Cartan basis. Denote $U := e_1 C e_2$ and $V := e_2 C e_1$. Take a homogeneous basis $\{ u_i \}_{i=1}^3$ of $U^+$ such that $n(u_1u_2, u_3) = 1$ (this is possible because $n(U^2,U)\neq 0$ and $n(U) = 0$). Note that $\{ u_i \}_{i=1}^3$ are homogeneous in $U^-$ too. Then $\{ v_1 = u_2u_3, v_2 = u_3u_1, v_3 = u_1u_2 \}$ is the dual basis in $V$. Since $\{ e_1^{\sigma}, u_i^{- \sigma},u_j^{\sigma} \}$ are homogeneous for $i,j = 1,2,3$, it is clear that $v_i$ is homogeneous in $\cV_C^\sigma$ for $i = 1,2,3$ and $\sigma = \pm$. We have obtained a homogeneous Cartan basis of $\cV_C$. Since $\Gamma$ is fine, it must be equivalent to the Cartan grading on $\cV_C$. 
\end{proof}

\subsection{Classification of fine gradings on Hurwitz Kantor triple systems}

\begin{remark}
Recall from Corollary~\ref{co:equiv.grads.classif} that the classification of fine gradings coincide on $\cT_C$ and $\cT'_C$. It turns out that fine gradings on the triple system $\cT'_\cC$, where $\cC$ is a Cayley algebra, have also been classified in an independent work \cite{DET21}, where this triple system has been used to classify gradings on a $3$-fold cross product denoted by $(\cC, X^\cC_1)$.
\end{remark}

\begin{remark}
We already know that in the case where $\dim C=1$ there is only one fine grading on $\cT_C$ (see Example~\ref{examples.dim.one}). We will now deal with the remaining cases.
\end{remark}

\begin{theorem}
Let $\Gamma$ be a fine grading on $\cT_C$ and $\dim C\geq 2$. Then $\Gamma$ is, up to equivalence, the Cayley-Dickson grading or the Cartan grading.
\end{theorem}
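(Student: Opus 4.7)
The proof strategy follows the one used in the pair case (Theorem~\ref{th:finegradspairs}), with triple products in $\cT_C$ playing the role of Kantor pair products, and is split according to whether $\Gamma$ admits a homogeneous element of rank~$2$ in $\cT_C$.

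Suppose first that such a rank-$2$ homogeneous element exists. Proposition~\ref{orbits.triple} provides an automorphism of $\cT_C$ sending it to some $\mu\cdot 1$, so, up to equivalence, we may assume that $1$ is homogeneous of some degree $g$; the relation $\{1,1,1\} = 1$ forces $2g = 0$. By Proposition~\ref{regulargradstriples} the shift $\Gamma^{[g]}$ restricts to a $G$-grading $\Gamma_C$ on $C$ as a Hurwitz algebra, and a proper algebra refinement of $\Gamma_C$ would pull back to a proper refinement of $\Gamma$; hence $\Gamma_C$ is fine, and is thus equivalent to either the Cartan or the Cayley-Dickson grading on $C$. The Cartan alternative is ruled out: its extension to $\cT_C$ keeps $e_1$ and $e_2$ in the same homogeneous component, whereas the Cartan grading on $\cT_C$ (Example~\ref{example.triple.cartan}) assigns them distinct degrees, yielding a proper refinement of $\Gamma$ and contradicting fineness. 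Hence $\Gamma_C$ is Cayley-Dickson, and the identification $\Univ(\Gamma) \cong \Univ(\Gamma_C) \times \ZZ_2$ in Proposition~\ref{regulargradstriples} shows that $\Gamma$ is equivalent to the Cayley-Dickson grading on $\cT_C$.

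Now assume every homogeneous element of $\cT_C$ is isotropic. Theorem~\ref{th:schemestriple} gives $\Aut(\cT_C) \leq \OO(C, n)$, so the bilinear form $n$ is homogeneous on $\cT_C$; combined with its nondegeneracy, there exist homogeneous $x \in C_g$ and $y \in C_{-g}$ with $n(x, y) \neq 0$, and the hypothesis forces $g \neq -g$ (else $x + y$ would be a rank-$2$ homogeneous element). Since $n(x + y) = n(x, y) \neq 0$, Proposition~\ref{orbits.triple} yields an automorphism sending $x + y$ to $\mu\cdot 1$ for some $\mu \in \FF^\times$ with $\mu^2 = n(x, y)$; after replacing $\Gamma$ by the equivalent grading, we may assume $x + y = \mu\cdot 1$ with $x, y$ still homogeneous. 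From $t(x) + t(y) = 2\mu \neq 0$ we may take $t(x) \neq 0$, so $e_1 := t(x)^{-1} x$ is an isotropic idempotent and $e_2 := 1 - e_1$ is another; substituting $y = (\mu - t(x))e_1 + \mu e_2$ into $n(y) = 0$ (using $n(e_1, e_2) = 1$) forces $t(x) = \mu$, so $x = \mu e_1$ and $y = \mu e_2$. Thus $e_1$ and $e_2$ are homogeneous of distinct degrees $g$ and $-g$, and extend to a Cartan basis of $C$.

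For $\dim C = 2$ we already have a homogeneous Cartan basis. Otherwise, the operator $V_{e_1, e_2} \colon z \mapsto \{e_1, e_2, z\}$ is homogeneous of degree $g + (-g) = 0$, and a direct computation on the Cartan basis reveals eigenvalues $2, -1, 0, 1$ on $\FF e_1$, $\FF e_2$, $U := e_1 C e_2$, $V := e_2 C e_1$ respectively. Eigenspaces of a degree-zero operator are graded subspaces, and since $0$ and $1$ are distinct in every characteristic $\neq 2$, both $U$ and $V$ are graded; in $\chr\FF = 3$ the eigenvalues $2$ and $-1$ coincide, merging $\FF e_1$ and $\FF e_2$ into a single graded eigenspace which nevertheless splits as $\FF e_1 \oplus \FF e_2$ by the distinctness of their degrees. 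Choosing a homogeneous basis $\{u_1, u_2, u_3\}$ of $U$, the identities $v_k = \{e_1, u_i, u_j\}$, for the cyclic permutations $(i, j, k)$ of $(1, 2, 3)$ (verified directly on the Cartan basis), produce homogeneous elements of $V$, giving a homogeneous Cartan basis of $\cT_C$ after a suitable normalization. Since $\Gamma$ is fine, its one-dimensional components must match those of the Cartan grading on $\cT_C$, and $\Gamma$ is equivalent to the Cartan grading. The main technical obstacle is this last step: the normalization of the homogeneous basis of $U$ must be arranged so that the triple products recover the standard Cartan relations in $V$, and in $\chr\FF = 3$ the collapse of the eigenvalues of $V_{e_1, e_2}$ requires falling back on the distinctness of $\deg(e_1)$ and $\deg(e_2)$ to recover the full splitting.
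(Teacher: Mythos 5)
Your proposal is correct and follows the same two-case strategy as the paper's proof: if some homogeneous element has rank $2$, move it to (a multiple of) $1$, invoke Proposition~\ref{regulargradstriples} to descend to a fine grading on the Hurwitz algebra $C$, and rule out the Cartan alternative by exhibiting the Cartan grading on $\cT_C$ as a proper refinement; otherwise produce homogeneous complementary isotropic idempotents $e_1,e_2$ from the nondegeneracy and homogeneity of $n$, show the Peirce spaces $U=e_1Ce_2$ and $V=e_2Ce_1$ are graded, and build a homogeneous Cartan basis. The one place where you genuinely diverge is in showing that $U$ and $V$ are graded: the paper passes to the auxiliary triple system $\cT'_C$ (licensed by Theorem~\ref{th:schemestriple}) and uses $\{e_1,C,e_2\}'=U$, $\{e_2,C,e_1\}'=V$ directly, whereas you stay inside $\cT_C$ and take eigenspaces of the degree-zero homogeneous operator $V_{e_1,e_2}$, whose eigenvalues $2,-1,0,1$ on $\FF e_1,\FF e_2,U,V$ you compute correctly (and whose collapse $2=-1$ in characteristic $3$ is harmless, as you note, since $e_1$ and $e_2$ are already individually homogeneous of distinct degrees). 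Both routes work; yours avoids the detour through $\cT'_C$ at the cost of the small characteristic discussion, and your closing construction $v_k=\{e_1,u_i,u_j\}$ agrees with the paper's dual basis $v_k=u_iu_j$. The only cosmetic imprecision is that the eigenvalue computation should be phrased as taking place on the Peirce decomposition relative to $e_1,e_2$ (which exists before a full Cartan basis has been produced), but the computation indeed uses only the Peirce relations, so nothing is at stake.
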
 
\begin{proof}
Let $\Gamma$ be a fine $G$-grading on $\cV_C$. Only the case $\dim C = 8$ will be considered (the cases with $\dim C = 2,4$ are proven with the same arguments).

Consider first the case where there is some homogeneous element $x$ in some orbit $\cO_2(\lambda)$. Since $\FF$ is algebraically closed, we can scale $x$ and assume that $n(x) = 1$. Furthermore, up to automorphism of $\cT_C$, we can assume that $x = 1$ is homogeneous. By Proposition \ref{regulargradstriples}, for $g=\deg(1)$, $\Gamma^{[g]}$ restricts to a grading $\Gamma_C$ on $C$, which is fine because $\Gamma$ is fine. With exactly the same argument used in the first part of the proof of Theorem~\ref{th:finegradspairs}, it follows that $\Gamma$ is equivalent to the Cayley-Dickson grading on $\cT_C$. 

\smallskip

Now consider the case where all homogeneous elements are in the orbit of rank $1$. Since $n$ is nondegenerate we can take two homogeneous elements $x,y\in \cT_C$ such that $n(x + y)\neq 0$. Following the proof of Theorem \ref{th:finegradspairs}, we deduce that, up to automorphism, $x$ and $y$ are isotropic idempotents in $C$, which will be denoted by $e_1$ and $e_2$. By Theorem \ref{th:schemestriple}, we can use the product of $\cT_C'$ instead of the one in $\cT_C$, because the classification of fine gradings (and the orbits under their automorphisms groups) coincide on both triple systems. Denote $U := e_1 C e_2$ and $V := e_2 C e_1$. Since $e_1$ and $e_2$ are homogeneous, $\{ e_1, C, e_2 \}' = U$ and $\{ e_2, C, e_1 \}' = V$ are graded subspaces. Following the proof of Theorem \ref{th:finegradspairs}, we can take a homogeneous basis $\{ u_i \}_{i=1}^3$ of $U$ such that $n(u_1 u_2, u_3) = 1$, and construct its dual basis $\{ v_i \}_{i=1}^3$ in $V$, so that we obtain a homogeneous Cartan basis of $\cT_C$. Since $\Gamma$ is fine, it must be equivalent to the Cartan grading on $\cT_C$. 
\end{proof}

\section{Weyl groups of fine gradings on Hurwitz Kantor systems} \label{section.weyl}
In this section we compute the Weyl grups of the fine gradings on Kantor pairs and triple systems of Hurwitz type.

\begin{remark}
Consider the Kantor pair $\cV_\FF$ and triple system $\cT_\FF$ associated with the $1$-dimensional Hurwitz algebra $\FF$. Let $\Gamma$ be the only fine grading on $\cV_\FF$ or on $\cT_\FF$. In both cases, it is clear that $\cW(\Gamma)$ is the trivial group. Thus, it remains to deal with the Kantor pairs and triple systems associated with Hurwitz algebras of dimension greater than $1$.
\end{remark}

\begin{theorem}\label{weyl.CD} Let $C$ be a Hurwitz algebra of dimension $2^m$ for some $m\in\{1,2,3\}$, and $\Gamma_{\CD}$ the Cayley-Dickson $\ZZ\times\ZZ_2^m$-grading (resp., $\ZZ_2^{m+1}$-grading) on $\cV_C$ (resp., on $\cT_C$). Then we have:
$$
\cW(\Gamma_{\CD}) \cong \left\{\left( \begin{array}{c|c}
1 & 0 \\ \hline a & A
\end{array}\right) | \; a\in\ZZ_2^m, \; A\in\GL_m(\ZZ_2) \right\} \lesssim \GL_{m+1}(\ZZ_2).
$$
\end{theorem}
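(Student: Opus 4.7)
The plan is to study $\cW(\Gamma_\CD)$ as a subgroup of $\Aut(\Univ(\Gamma_\CD))$: first identifying the ambient group, then bounding it above via the invariants any self-equivalence must preserve, and finally realizing every element of the claimed matrix group by an explicit automorphism. The universal group is $\ZZ\times\ZZ_2^m$ in the Kantor pair case and $\ZZ_2^{m+1}$ in the triple case. A direct inspection shows that $\Aut(\ZZ\times\ZZ_2^m)\cong\ZZ_2\times(\ZZ_2^m\rtimes\GL_m(\ZZ_2))$, since $\ZZ_2^m$ is the characteristic torsion subgroup, while $\Aut(\ZZ_2^{m+1})=\GL_{m+1}(\ZZ_2)$.

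For the upper bound, the pair and triple cases require parallel but distinct arguments which yield the same matrix model. In the pair case, any automorphism of $\cV_C$ preserves the decomposition $\cV_C=\cV_C^+\oplus\cV_C^-$, and hence preserves the two halves $\{\pm 1\}\times\ZZ_2^m$ of the support separately; writing a Weyl element as $\begin{pmatrix}\pm 1 & 0\\ a & A\end{pmatrix}$, this forces the top-left entry to be $+1$. In the triple case, the support $\{\bar 1\}\times\ZZ_2^m$ spans $\Univ$, so for a matrix $\begin{pmatrix}\alpha & \beta\\ a & A\end{pmatrix}\in\GL_{m+1}(\ZZ_2)$ to permute the support one needs $\alpha+\beta g=\bar 1$ for every $g\in\ZZ_2^m$; since the second coordinates of the support generate $\ZZ_2^m$, this forces $\beta=0$ and $\alpha=\bar 1$. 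Both cases yield the claimed matrix group as an upper bound.

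For realization, I would exhibit two families of generators. The block $\begin{pmatrix}1&0\\0&A\end{pmatrix}$ with $A\in\GL_m(\ZZ_2)$ comes from algebra automorphisms of $C$ preserving the Cayley-Dickson grading; these realize all of $\GL_m(\ZZ_2)$ for Hurwitz algebras of dimension $2^m$ with $m\le 3$ by known results on the Weyl group of $\CD$-gradings on Hurwitz algebras (see \cite[\S4]{EKmon}), and such automorphisms extend diagonally to $\cV_C$ via $(\varphi,\varphi)$ and directly to $\cT_C$. The shift block $\begin{pmatrix}1&0\\a&I\end{pmatrix}$ is realized by $L_{x_a}$: for $a\neq 0$, \eqref{sigmamatrix} gives $\sigma_{a,a}=-1$, so $x_a$ is traceless of norm $1$, and Lemma~\ref{lemma.automorphisms} yields $L_{x_a}\in\Aut(\cT_C)$. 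The identity $L_{x_a}(x_g)=\sigma_{a,g}x_{a+g}$ shows that this shifts the second coordinate by $a$ while fixing the first; in the pair case $(L_{x_a},L_{x_a})\in\Aut(\cV_C)$ yields the corresponding Weyl element. The product of these two families covers every matrix $\begin{pmatrix}1&0\\a&A\end{pmatrix}$, giving the required lower bound.

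The main obstacle is establishing uniformly that the $\GL_m(\ZZ_2)$ block is fully realized for $m\in\{1,2,3\}$, which relies on previously known descriptions of the Weyl group of the $\CD$-grading on a Hurwitz algebra, rather than on the Kantor-system theory developed here. A secondary bookkeeping point is that the restriction of the top-left entry to $+1$ arises from different structural reasons in the two cases (preservation of the $\cV^+/\cV^-$ decomposition for pairs, versus preservation of a spanning support for triples), and it is only a convenient coincidence of notation that the resulting matrix group has the same form in both settings.
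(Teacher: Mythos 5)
Your proposal is correct and follows essentially the same route as the paper: the upper bound comes from the fact that a Weyl element must preserve the support (resp.\ the two halves $\supp\Gamma^{\pm}$ in the pair case), and the lower bound is obtained by combining the block $\GL_m(\ZZ_2)$ coming from the Weyl group of the Cayley--Dickson grading on $C$ itself (via \cite{EKmon}) with the translations realized by the left multiplications $L_{x_a}$ for traceless norm-one homogeneous $x_a$, exactly as in Lemma~\ref{lemma.automorphisms}. The only cosmetic differences are that you compute the block of $L_{x_a}$ explicitly as $\left(\begin{smallmatrix}1&0\\a&I\end{smallmatrix}\right)$ where the paper uses a single such $M$ together with the diagonal block to generate the whole group, and that you give a direct support argument for the triple case where the paper deduces it from the inclusion $\cW(\Gamma_{\cT})\leq\cW(\Gamma_{\cV})$.
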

\begin{proof}
Denote the Cayley-Dickson grading on $\cV_C$ (resp. on $\cT_C$) by $\Gamma_\cV$ (resp. by $\Gamma_\cT$). Also denote
$$\cW \coloneqq
\left\{\left( \begin{array}{c|c}
1 & 0 \\ \hline a & A
\end{array}\right) \med \; a\in\ZZ_2^m, \; A\in\GL_m(\ZZ_2) \right\}.$$
For the case $m = 3$, recall from \cite{EKmon} that if $\Gamma_C$ is the fine $\ZZ_2^m$-grading on $C$, then
$\cW(\Gamma_C) \cong \Aut(\ZZ_2^m) \cong \GL_m(\ZZ_2)$. By restriction of the automorphisms, the same holds for the cases $m = 1,2$.

First, consider the case on the Kantor pair $\cV_C$ with $m = 3$, so that $C = \cC$ is the Cayley algebra. Since the automorphisms of $\cC$ extend to $\cV_\cC$, we can identify $\cW(\Gamma_\cC) \leq \cW(\Gamma_\cV)$. Therefore, $\cW(\Gamma_\cV)$ has a subgroup corresponding to the block structure
$$ 
G_\cC \coloneqq \left\{\left( \begin{array}{c|c}
1 & 0 \\ \hline 0 & A \end{array}\right) \med A\in \GL_3(\ZZ_2) \right\} \cong \GL_3(\ZZ_2).
$$
Now, fix a traceless element $x\in\cC$ of norm $1$ that is homogeneous in $\cV_\cC^+$. Then, $x$ is also homogeneous in $\cV_\cC^-$ and, by Lemma~\ref{lemma.automorphisms}, the map $L_x$ is a homogeneous automorphism of $\Gamma_\cV$. Moreover, $L_x$ induces an element of $\cW(\Gamma_\cV)$ corresponding to a block of the form
$$
M = \left( \begin{array}{c|c}
1 & 0 \\ \hline a & A \end{array}\right)
$$
for some $0 \neq a \in\ZZ_2^3$, $A\in\GL_3(\ZZ_2)$. It is easy to see that the group generated by $M$ and $G_\cC$ is $\cW$, which implies the inclusion $\cW \lesssim \cW(\Gamma_\cV)$. Furthermore, we have
$$
\cW(\Gamma_\cV) \leq \{\varphi\in\Aut(\ZZ\times\ZZ_2^3) \med \varphi(\supp\Gamma_\cV^\sigma)
= \supp\Gamma_\cV^\sigma, \; \sigma = \pm \} \equiv \cW.
$$
We have proven the isomorphism $\cW(\Gamma_\cV) \cong \cW$.

The same arguments above prove the result for the Kantor pairs in the cases $m = 1, 2$, and also that $\cW \lesssim \cW(\Gamma_\cT)$ in all the cases for triple systems. Since we have the natural inclusion $\cW(\Gamma_\cT) \leq \cW(\Gamma_\cV) \cong \cW$, the result follows for triple systems too.
\end{proof}

\begin{theorem} \label{weyl.Cartan}
Let $C$ be a Hurwitz algebra with $\dim C = 2^m > 1$. Then,
$$\cW(\Gamma_\ZZ(\cV_C)) \cong \cW(\Gamma_\ZZ(\cT_C)) \cong \Sym(\ZZ_2^{m-1}) \times \Sym(\widehat{\ZZ}_2) \cong \Sym(2^{m-1}) \times\ZZ_2 \lesssim \Aut\Phi,$$
where $\Phi$ denotes the root system of $\kan(C)$.

Here, an element $(\rho, \tau) \in \Sym(\ZZ_2^{m-1}) \times \Sym(\widehat{\ZZ}_2)$ corresponds to the permutation of subspaces $\FF v_g^\alpha \mapsto \FF v_{\rho(g)}^{\tau(\alpha)}$ in $\cT_C$ or $\cV_C^+$.
\end{theorem}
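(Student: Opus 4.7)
The plan is to prove the result first for the Kantor pair $\cV_C$ by matching upper and lower bounds, and then transfer it to $\cT_C$ via the canonical embedding $\Aut(\cT_C)\hookrightarrow\Aut(\cV_C)$, $f\mapsto(f,f)$. The key tools will be Proposition~\ref{peirce.cartan} for the upper bound, together with Lemma~\ref{lemma.automorphisms} and formula~\eqref{product.cd.by.cartan} for the lower bound.

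For the upper bound on $\cV_C$ with $m\ge 2$ (the case $m=1$ being trivial), a Weyl element $\phi$ permutes the $1$-dimensional subspaces $\FF v_g^\alpha$ and hence the idempotents $e_g^\alpha$. Writing the induced bijection on labels as $(g,\alpha)\mapsto(\rho(g),\tau_g(\alpha))$, preservation of Peirce eigenvalues forces $\phi^+(\cV^+_{0,1}(e_g^\alpha))=\cV^+_{0,1}(e_{\rho(g)}^{\tau_g(\alpha)})$; by Proposition~\ref{peirce.cartan} this amounts to $\phi^+(\bigoplus_{h\neq g}\FF v_h^\alpha)=\bigoplus_{h'\neq\rho(g)}\FF v_{h'}^{\tau_g(\alpha)}$, which combined with $\phi^+(v_h^\alpha)\in\FF v_{\rho(h)}^{\tau_h(\alpha)}$ forces $\tau_h(\alpha)=\tau_g(\alpha)$ for every $h\neq g$. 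Varying $g$ then shows that all $\tau_g$ coincide with some $\tau\in\Sym(\widehat{\ZZ}_2)$, yielding $\cW(\Gamma_\ZZ(\cV_C))\lesssim\Sym(\ZZ_2^{m-1})\times\Sym(\widehat{\ZZ}_2)$.

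For the lower bound I will construct explicit automorphisms of $\cT_C$, exploiting the coincidence $\Sym(\ZZ_2^{m-1})\cong\ZZ_2^{m-1}\rtimes\GL_{m-1}(\ZZ_2)=\mathrm{Aff}(\ZZ_2^{m-1})$ that holds for $m\le 3$. The nontrivial element of $\Sym(\widehat{\ZZ}_2)$ is realized by the algebra automorphism of $C$ defined by $e_1\leftrightarrow e_2$ and $u_i\leftrightarrow v_i$ (whose automorphy is readily verified from Figure~\ref{product.cartan.basis}), which uniformly sends $v_g^\mathds{1}\leftrightarrow v_g^\omega$. Translations by nonzero $g_0\in\ZZ_2^{m-1}$ are produced via compositions $L_{x_{g_1}^0}L_{x_{g_2}^0}$ with $g_1+g_2=g_0$ and $g_1,g_2\neq 0$, using Lemma~\ref{lemma.automorphisms} together with \eqref{product.cd.by.cartan} to observe that the two induced $\alpha$-swaps cancel, giving a pure translation on the Cartan basis. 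The factor $\GL_{m-1}(\ZZ_2)$ is realized by automorphisms of $\cC$ whose matrix in $\GL_m(\ZZ_2)$ has the block form $\matr{1 & 0 \\ 0 & A'}$ (obtained from the Weyl group of the Cayley-Dickson grading on $\cC$); a direct computation using \eqref{second.cartan.basis} shows that these act cleanly as $v_g^\alpha\mapsto\pm v_{A'g}^\alpha$.

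Combining the two bounds with the embedding $\cW(\Gamma_\ZZ(\cT_C))\hookrightarrow\cW(\Gamma_\ZZ(\cV_C))$ given by $f\mapsto(f,f)$, both Weyl groups coincide with $\Sym(\ZZ_2^{m-1})\times\Sym(\widehat{\ZZ}_2)$. The final embedding into $\Aut\Phi$ is then obtained by extending a Weyl element to the Kantor-Lie algebra $\kan(\cV_C)$, whose extended Cartan grading is the root-space decomposition with root system $\Phi$, so that the extended Weyl element acts as a root-system automorphism. The principal technical obstacle will lie in step three of the lower bound: carefully tracking the Cayley-Dickson structure constants $\sigma$ through \eqref{second.cartan.basis} to confirm that a block-diagonal Cayley algebra automorphism induces $v_g^\alpha\mapsto\pm v_{A'g}^\alpha$ without introducing any unintended $\alpha$-swap.
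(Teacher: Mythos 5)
Your strategy is sound but genuinely different from the paper's in both halves, so it is worth comparing. For the lower bound the paper cites the explicit automorphisms from the proof of Theorem~4.17 of \cite{EKmon}, which already realize $\Sym(\ZZ_2^{m-1}\setminus\{0\})\times\Sym(\widehat{\ZZ}_2)$, and then adjoins a \emph{single} homogeneous $L_{x_g}$, which by \eqref{product.cd.by.cartan} acts as translation by $g$ composed with the $\alpha$-swap; together these generate all of $\Sym(\ZZ_2^{m-1})\times\Sym(\widehat{\ZZ}_2)$. For the upper bound the paper composes an arbitrary Weyl element with known ones until it fixes $\FF e_1^+$, uses Proposition~\ref{peirce.cartan} to force the remaining Cartan lines, and concludes the residual element is the identity. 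Your Peirce-eigenvalue bookkeeping and your decomposition of $\Sym(\ZZ_2^{m-1})$ as an affine group are legitimate alternatives, and your ``principal technical obstacle'' can in fact be bypassed: once your upper bound is in place, every Weyl element acts as $(g,\alpha)\mapsto(\rho(g),\tau(\alpha))$ with $\tau$ independent of $g$, so any unintended $\alpha$-swap in the block-diagonal automorphisms is uniform and is absorbed by composing with the global $\omega$-swap.

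Two concrete problems need attention. First, your translation step fails verbatim for $m=2$: in $\ZZ_2^{m-1}=\ZZ_2$ the only nonzero element $g_0=a_2$ cannot be written as $g_1+g_2$ with $g_1,g_2\neq 0$, so the composition $L_{x_{g_1}^0}L_{x_{g_2}^0}$ you propose does not exist. The fix is the paper's own device: a single $L_{x_{g_0}^0}$ gives translation by $g_0$ together with the $\alpha$-swap by \eqref{product.cd.by.cartan}, and composing with your $e_1\leftrightarrow e_2$, $u_i\leftrightarrow v_i$ automorphism recovers the pure translation. Second, in the upper bound the notation $(g,\alpha)\mapsto(\rho(g),\tau_g(\alpha))$ already presupposes that the $\ZZ_2^{m-1}$-component of the image is independent of $\alpha$, which is part of what must be proved; to get it you should also use the Peirce component $\cV^+_{\frac{1}{2},\frac{1}{2}}(e_g^\alpha)=\bigoplus_{h\neq g}\FF v_h^{\omega\alpha}$ (or $\cV^+_{-\frac{1}{2},0}$) from Proposition~\ref{peirce.cartan}, which pins down that the first coordinate of the image of $(g,\omega\alpha)$ equals that of $(g,\alpha)$. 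You should also justify that a Weyl element permutes the idempotent lines $\FF e_g^\alpha$ consistently across $\cV^+$ and $\cV^-$ before invoking Peirce decompositions; this follows from the homogeneity of the bilinear trace (Lemma~\ref{le:homogeneous.trace}, or Theorem~\ref{aut_Hurwitz_pair}). With these repairs the argument goes through and matches the paper's conclusion.
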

\begin{proof}
We already know that Weyl groups extend as subgroups when extending gradings from Kantor triple systems to Kantor pairs, and from Kantor pairs to their Kantor-Lie algebras, so that we have $\cW(\Gamma_\ZZ(\cT_C)) \leq \cW(\Gamma_\ZZ(\cV_C)) \leq \Aut\Phi$.

From the automorphisms given in \cite[Proof of Th.~4.17]{EKmon}, it follows that for each
$\rho\in\Sym(\ZZ_2^{m-1}\setminus\{\bar 0\})$ and $\tau\in\Sym(\widehat{\ZZ}_2)$ there exists $\varphi\in\cW(\Gamma_\ZZ(\cT_C))$ acting via
$\varphi(\FF v^\alpha_g) = \FF v^{\tau(\alpha)}_{\rho(g)}$. Hence $\Sym(\ZZ_2^{m-1} \setminus \{\bar 0\}) \times \Sym(\widehat{\ZZ}_2)$ is a subgroup of $\cW(\Gamma_\ZZ(\cT_C))$. Since $L_{x_g}$ is a homogeneous automorphism of $\cT_C$ for $g\in\ZZ_2^{m-1}$, it follows from \eqref{product.cd.by.cartan} that $\cW := \Sym(\ZZ_2^{m-1}) \times \Sym(\widehat{\ZZ}_2)$ is a subgroup of $\cW(\Gamma_\ZZ(\cT_C))$. Without loss of generality, we can assume $m=3$ (for $m=1,2$, the proof is analogous).

Let $\varphi\in\cW(\Gamma_\ZZ(\cV_C))$. We claim that $\varphi\in\cW$. If we compose $\varphi$ with an element of $\cW$, we can assume that $\varphi$ fixes the subspace $(\FF v^{\charTrivial}_0)^+ = \FF e_1^+$, and therefore $\varphi$ also fixes the subspace with opposite degree, which is the subspace $(\FF v^{\omega}_0)^- = \FF e_2^-$. Then, the Peirce subspaces in Proposition~\ref{peirce.cartan} associated with the homogeneous idempotent $e^{\charTrivial}_0$ are fixed too. Consequently, $\varphi$ fixes the homogeneous component $(\FF v^\omega_0)^+ = \FF e_2^+$ and permutes the homogeneous components of $\bigoplus_{0\neq g\in\ZZ_2^{m-1}} (\FF v_g^{\charTrivial})^+ = \bigoplus_{i=1}^3 \FF u_i^+$. Then, composing $\varphi$ again with an element of $\cW$, we can assume that $\varphi$ fixes each of the subspaces $(\FF v_g^{\charTrivial})^+$ for $g\in\ZZ_2^{m-1}$. Again, since the Peirce subspaces are preserved, it follows that each of the subspaces $(\FF v_g^\omega)^+$ for $g\in\ZZ_2^{m-1}$ are fixed too, so that $\varphi$ is the identity map. We have proven that $\cW(\Gamma_\ZZ(\cV_C)) \leq \cW$. Consequently, we have that $\cW \leq \cW(\Gamma_\ZZ(\cT_C)) \leq \cW(\Gamma_\ZZ(\cV_C)) \leq \cW$, and the result follows.
\end{proof}

\begin{remark}
Let $C$ be a Hurwitz algebra. Consider the root decomposition of $\kan(\cV_C)$ associated with the Cartan grading $\Gamma_\ZZ(\cV_C)$ on $\cV_C$. Let $x_\alpha$, $x_\beta$ be elements of the Cartan basis of $\cV_C$ with associated roots $\alpha$, $\beta$. By Theorem~\ref{weyl.Cartan}, the action of the Weyl group on the homogenous components is transitive; thus all the roots related to the homogeneous components of $\cV_C^\sigma$ have the same length, so that $\left\lVert \alpha \right\rVert = \left\lVert \beta \right\rVert$. Hence, Proposition~\ref{peirce_constants} shows that the left Peirce constant relating $x_\alpha$ and $x_\beta$ is exactly $\cos(\alpha, \beta)$. Since the left Peirce constants appearing in Proposition~\ref{peirce.cartan} are exactly $1$, $1/2$, $0$ and $-1/2$, the corresponding angles appearing between their roots are $0^\circ$, $60^\circ$, $90^\circ$ and $120^\circ$, respectively.
\end{remark}

\section{Induced fine gradings via the Kantor construction} \label{section.induced.gradings}

In this section we give a summary of the fine gradings on Lie algebras obtained, using the Kantor construction, from the fine gradings on Kantor pairs of Hurwitz type. We will first determine the associated Kantor-Lie algebra for each case.

\bigskip

Recall from \cite{Al79} that the Kantor construction can be regarded as
$$\kan(\cA) = \cS^- \oplus \cA^- \oplus (T_{\cA} \oplus \mathrm{Der}(\cA)) \oplus \cA^+ \oplus \cS^+,$$
where $T_{\cA}= \lbrace T_{x}:= V_{x,1} \med x\in \cA \rbrace$.

Let $(C,\inv)$ be a Hurwitz algebra, and recall that in this case $\mathrm{Der}(C)=\mathrm{Inder}(C)$.
By \cite[Corollary 6]{Al79}, $\kan(C)$ is a simple Lie algebra.
Recall also that if the dimension of $C$ is 1, 2, 4 or 8, then the dimension of $\mathrm{Der}(C)$ is 0, 0, 3 or 14, respectively. Therefore the dimension of $\kan(C)$ is 3, 8, 21 or 52, respectively. Consequently, in the case that $\dim C$ equals 1, 2 or 8, the simple Lie algebra $\kan(C)$ must be isomorphic to $\mathfrak{a}_1 = \mathfrak{sl}_2$, $\mathfrak{a}_2 = \mathfrak{psl}_3$ or $\mathfrak{f}_4$, respectively.
On the other hand, for the case $\dim C = 4$ there are two simple Lie algebras of dimension 21 (of types $B_3$ and $C_3$), but since
the type of the main grading on $\kan(C)$ is $(0,0,2,2,0,0,1)$ and there is no grading on $\mathfrak{o}_7$ ($B_3$) with such type (whereas there is one for the Lie algebra of type $C_3$) (see \cite{EKmon} and references therein), we can conclude that $\kan(C)$ is isomorphic to $\mathfrak{sp}_3 = \mathfrak{c}_3$. 
Note that the Lie algebras we obtained are the ones appearing in the first row (or column) of the well-known Freudenthal magic square.

\smallskip

Recall from sections above that, for the Hurwitz Kantor pair $\cV_C$ in the case with $\chr\FF = 3$ and $\dim C = 2$, the results related to automorphisms and gradings are different. The reason of this is that $\kan(C) = \mathfrak{a}_2$ is exceptional in this case, and it is also well-known that $\Aut(\mathfrak{a}_2)$ is an exceptional Lie group of type $G_2$ \cite[\S 7.2, 7.3]{Ste61}. Note that in \cite{EKmon}, the classification of fine gradings on simple Lie algebras of type $A$ required a different treating in the case of $\mathfrak{a}_2$. Also, recall that $\mathfrak{a}_2 = \mathfrak{sl}_3$ if $\chr\FF \neq 3$, but $\mathfrak{a}_2 = \mathfrak{sl}_3 / Z(\mathfrak{sl}_3)$ with $Z(\mathfrak{sl}_3) = \FF I_3$ if $\chr\FF = 3$.

\begin{proposition}
Let $C$ be a Hurwitz algebra with $\dim C = 2^m > 1$, and either $\chr\FF \neq 3$ or $\dim C \neq 2$. The Cayley-Dickson grading on the Kantor pair $\cV_C$ extends to a fine grading on $\kan(\cV_C)$ with universal group $\ZZ \times \ZZ_2^m$. For each possible case, $\dim C = 2$, $4$, $8$, the type of the grading is $(8)$, $(15,3)$ and $(31,0,7)$, respectively.
\end{proposition}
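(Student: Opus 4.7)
The fineness of $\Gamma\coloneqq E_G(\Gamma_\CD(\cV_C))$, together with the identification of its universal group as $G=\ZZ\times\ZZ_2^m$, follows directly from Proposition~\ref{gradings.correspondence.from.pairs}, since the Cayley-Dickson grading on $\cV_C$ has already been shown to be fine and given by $G$. It only remains to compute the type.

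The plan is to analyse each of the five pieces of the Kantor decomposition~\eqref{kantor.construction} separately, exploiting that the $\ZZ$-coordinate of the $G$-degree distinguishes them. In $\cV_C^{\pm}$ the Cayley-Dickson basis immediately yields $2\cdot 2^m$ one-dimensional components, one in each degree $(\sigma 1,g)$. For $\cS^{\pm}\cong L_{\cS(C,\inv)}$ I would use $K(x_g,x_h)=L_{\psi(x_g,x_h)}$ together with the computation $\psi(x_g,x_h)\in\FF\,x_{g+h}$, and the fact that $x_k$ is skew if and only if $k\neq 0$ (because $\sigma_{g,g}=-1$ for $g\neq 0$); this produces $2(2^m-1)$ one-dimensional components in the degrees $(\pm 2,k)$ with $0\neq k\in\ZZ_2^m$.

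The central computation is the analysis of $\kan(\cV_C)^0=T_C\oplus\Der(C)$ as a graded space. Each $T_{x_g}=V_{x_g,1}$ is a homogeneous operator of $\ZZ_2^m$-degree~$g$. I would show that $x\mapsto T_x$ is injective under the theorem's hypotheses: evaluating $T_x(1)=0$ and writing $x=h+s$ with $h\in\cH(C,\inv)$, $s\in\cS(C,\inv)$, one obtains $h+3s=0$, which in characteristic different from~$3$ forces $x=0$; in characteristic~$3$ with $\dim C>2$ one is reduced to $V_{s,1}=[s,\,\cdot\,]$, whose kernel intersects $\cS(C,\inv)$ trivially since $Z_c(C)\cap\cS(C,\inv)=0$. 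Thus $T_C$ contributes one one-dimensional piece in each degree $(0,g)$, $g\in\ZZ_2^m$. For $\Der(C)$ I would invoke the standard graded decompositions: $\Der(K)=0$; $\Der(\mathbb{H})$ is three-dimensional and its $\ZZ_2^2$-grading has one one-dimensional piece in each non-zero degree; and $\Der(\cC)=G_2$ is fourteen-dimensional, carrying its well-known fine $\ZZ_2^3$-grading of type $(0,7)$ with one two-dimensional piece in each non-zero degree (see~\cite{EKmon}). Combining the contributions gives $4+2+2=8$ one-dimensional components for $\dim C=2$; $8+6+1=15$ one-dimensional plus $3$ two-dimensional components for $\dim C=4$; and $16+14+1=31$ one-dimensional plus $7$ three-dimensional components for $\dim C=8$, which are exactly the announced types $(8)$, $(15,3)$ and $(31,0,7)$.

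The main obstacle is the graded decomposition of $\Der(C)$, particularly for the octonion case, which reduces to recognising the fine $\ZZ_2^3$-grading on $G_2$ inherited from the Cayley-Dickson grading on $\cC$.
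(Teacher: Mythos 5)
Your proof is correct, and its first part (fineness and the identification of the universal group via Proposition~\ref{gradings.correspondence.from.pairs}) is exactly the paper's. For the computation of the type, however, you take a genuinely different route. The paper works out only the case $\dim C=8$ in detail and argues by dimension count plus symmetry: it pins down the components of degree $(\pm 2,e)$ and $(0,e)$ using $K(x_g^+,x_g^+)=0$ and $D(x_g^-,x_g^+)=\id$, and then invokes the transitivity of the Weyl group computed in Theorem~\ref{weyl.CD} on the remaining degrees $(\pm 2,g)$ and $(0,g)$ with $g\neq e$ to conclude that those components all have the same dimension, forced to be $7/7=1$ and $(22-1)/7=3$ respectively. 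You instead compute every piece explicitly: the $(\pm 2,k)$-components directly from $\psi(x_g,x_h)\in\FF x_{g+h}$ (to get that all $2^m-1$ of them are actually nonzero one should note, e.g., $\psi(1,x_k)=-2x_k\neq 0$, which is implicit in your appeal to $L_{\cS}=\cS$), and the degree-zero part via Allison's decomposition $\kan(\cV_C)^0=T_C\oplus\Der(C)$ together with the known fine $\ZZ_2^m$-gradings on $\Der(C)$ (trivial, type $(3)$ on $\mathfrak{sl}_2$, type $(0,7)$ on $\mathfrak{g}_2$). Your route requires importing those graded decompositions of $\Der(C)$ from \cite{EKmon} and checking injectivity of $x\mapsto T_x$, which you do correctly including the characteristic-$3$ case (where the hypothesis $\dim C\neq 2$ guarantees $Z_c(C)\cap\cS(C,\inv)=0$); in exchange it identifies the actual structure of each homogeneous component, namely $\FF T_{x_g}\oplus\Der(C)_g$ in degree $(0,g)$, and treats the three dimensions uniformly, whereas the paper's argument is shorter but leans on the Weyl group theorem. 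Both arguments are sound and yield the announced types.
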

\begin{proof}
The first part follows from Prop.~\ref{gradings.correspondence.from.pairs}. Consider the case $\dim C = 8$. Denote $\cV = \cV_\cC$. Recall that both subspaces $\kan(\cV)^1 = \cV^+$ and $\kan(\cV)^{-1} = \cV^-$ consist of eight $1$-dimensional homogeneous components. Also, note that $\dim \kan(\cV)^0 = 22$ and $\dim \kan(\cV)^{\pm 2} = 7$.

For $x_g$, $x_h$ in the Cayley-Dickson basis $\cB_{\CD}(\cC)$ of $\cC$, we have that $K(x_g^+, x_h^+)= L_{\psi(x_g, x_h)}$ where $\psi(x,y) := x\bar y - y\bar x$. Therefore, $K(x_g^+, x_h^+) = 0$ if $g = h$, that is, $\kan(\cV)^2_{(2, e)} = 0$, where $e$ is the neutral element of $\ZZ_2^3$. Hence, $\supp\kan(\cV)^2 = \{(2,g) \med e\neq g\in\ZZ_2^3 \}$. The Weyl group in Theorem~\ref{weyl.CD} shows that the homogeneous components of $\supp\kan(\cV)^2$ are in the same orbit under the action by automorphisms, so that they have the same dimension, and consequently they are $1$-dimensional. The same arguments hold for $\supp\kan(\cV)^{-2}$.

Note that for $x_g \in\cB_{\CD}(\cC)$ we have that $D^{\sigma}(x_g, x_g)= \id_{\cV^{\sigma}}$ (that is, $x_g^-$ is the conjugate inverse of $x_g^+$), so that $\dim \kan(\cV)^0_{(0,e)} = 1$. Again, the Weyl group in Theorem~\ref{weyl.CD} shows that the homogeneous components with degrees $\{(0, g) \med e\neq g\in\ZZ_2^3\}$ are in the same orbit under the action by automorphisms, so that they must have the same dimension, which must be 3. It follows that the type of the grading on $\kan(\cV)$ is $(31,0,7)$ (note that this coincides with the type of the grading on $\mathfrak{f}_4$ given in \cite[Corollary 5.40]{EKmon}). The proof is analogous in the cases $\dim C = 2$, $4$.
\end{proof}

\begin{proposition}
Let $C$ be a Hurwitz algebra with $\dim C = 2^m$. The Cartan grading on the Kantor pair $\cV_C$ extends to a fine grading on $\kan(\cV_C)$ with universal group $\ZZ^{m+1}$, that is, the Cartan grading on $\kan(\cV_C)$.
\end{proposition}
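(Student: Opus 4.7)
The plan is to leverage the correspondence between gradings on Kantor pairs and Kantor-compatible gradings on the associated Kantor-Lie algebras, together with a rank argument at the level of tori in the affine group scheme $\AAut(\kan(\cV_C))$.

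First, I recall that the Cartan grading $\Gamma := \Gamma_\ZZ(\cV_C)$ on $\cV_C$ is fine and given by its universal group $G = \ZZ^{m+1}$; this is the content of the proposition preceding this one (and, for the trivial case $m = 0$ with $\dim C = 1$, of Example~\ref{examples.dim.one}). Applying Proposition~\ref{gradings.correspondence.from.pairs} to $\Gamma$ yields at once that the extended grading $E_G(\Gamma)$ on $\kan(\cV_C)$ is fine (in the class of all abelian group gradings) and has universal group $\ZZ^{m+1}$ as well. This already discharges the fineness and universal-group part of the statement.

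It then remains to identify $E_G(\Gamma)$ with the Cartan grading of the simple Lie algebra $\kan(\cV_C)$. As summarized at the beginning of this section, for $m = 0, 1, 2, 3$ (equivalently $\dim C = 1, 2, 4, 8$) the algebra $\kan(\cV_C)$ is simple of type $A_1$, $A_2$ (respectively $\mathfrak{psl}_3$ in characteristic~$3$), $C_3$, $F_4$, all of rank exactly $m+1$. Since the universal group of $E_G(\Gamma)$ is the free abelian group $\ZZ^{m+1}$, whose rank matches the rank of $\kan(\cV_C)$, the associated diagonalizable subgroup scheme of $\AAut(\kan(\cV_C))$ is a torus of maximal rank, hence a maximal torus; the corresponding weight decomposition of $\kan(\cV_C)$ is precisely the root space decomposition, i.e.\ a Cartan grading.

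\textbf{Anticipated obstacle.} The delicate point is the final step: justifying that a fine abelian group grading on a simple Lie algebra of rank $r$ with free abelian universal group $\ZZ^r$ must be the Cartan grading. This follows cleanly from the affine-group-scheme dictionary recalled in \cite[Appendix A]{EKmon}, since the torsion-freeness of $\ZZ^{m+1}$ forces the associated quasi-torus to be a genuine torus, and the rank match forces it to be maximal. A brief extra remark is warranted in the exceptional case $\chr\FF = 3$, $m = 1$, where $\kan(\cV_K) = \mathfrak{psl}_3$ is not $\mathfrak{sl}_3$; however, $\mathfrak{psl}_3$ still has rank $2$ and a Cartan grading by $\ZZ^2$, so the argument goes through unchanged. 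As an alternative or sanity check, one can verify by a direct computation with the explicit degrees of Example~\ref{example.pair.cartan} that the identity component $E_G(\Gamma)_0$ has dimension exactly $m+1$, matching the dimension of a Cartan subalgebra of $\kan(\cV_C)$, the required linear dependencies taking place among the inner derivations $D(x^-, x^+)$ for $x$ ranging over the Cartan basis.
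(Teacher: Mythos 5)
Your proof is correct and follows essentially the same route as the paper's: the paper's own proof is a one-line appeal to Proposition~\ref{gradings.correspondence.from.pairs} together with the fact that Cartan gradings on Lie algebras are produced by maximal tori, which is precisely the argument you spell out. The extra detail you supply (torsion-freeness of $\ZZ^{m+1}$ forcing the associated quasi-torus to be a genuine torus, plus the rank count identifying it as maximal) is a faithful expansion of what the paper leaves implicit.
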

\begin{proof}
Consequence of the correspondence given in Prop.~\ref{gradings.correspondence.from.pairs} and the fact that Cartan gradings on Lie algebras are produced by maximal tori.
\end{proof}

In Figure~\ref{tabla.Kantor} we summarize the general information for each fine Kantor-compatible grading on the Kantor-Lie algebras of our study, including the type and universal group. (The types of the gradings follow from the classification of fine gradings on the classical simple Lie algebras \cite{EKmon}.) Recall that we denote Cartan gradings and Cayley-Dickson gradings by $\Gamma_\ZZ$ and $\Gamma_\CD$, respectively.

\begin{center}
\begin{figure}[htbp]
\begin{tabular}{|c||c|c|c|c|}
  \hline
	$\dim C$ & 1 & 2 & 4 & 8 \\
	$\kan(C)$ & $\mathfrak{a}_1$ & $\mathfrak{a}_2$ & $\mathfrak{c}_3$ & $\mathfrak{f}_4$ \\
	\hline
	$\Univ(\Gamma_\ZZ)$ & $\ZZ$ & $\ZZ^2$ & $\ZZ^3$ & $\ZZ^4$ \\
	Type($\Gamma_\ZZ$) & $(3)$ & $(6, 1)$ & $(18, 0, 1)$ & $(48, 0, 0, 1)$ \\
	\hline
	$\Univ(\Gamma_\CD)$ & $-$ & $\ZZ\times\ZZ_2 \ (\chr\FF \neq 3)$ & $\ZZ\times\ZZ_2^2$ & $\ZZ\times\ZZ_2^3$ \\
	Type($\Gamma_\CD$) & $-$ & $(8)$ & $(15,3)$ & $(31,0,7)$ \\	
	\hline
\end{tabular}
\caption{Fine gradings obtained via the Kantor construction from Kantor pairs of Hurwitz type}
\label{tabla.Kantor}
\end{figure}
\end{center}

\bigskip

\textbf{Acknowledgements}
The authors are very thankful to Alberto Elduque for providing important references and some valuable advice. Thanks are also due to the anonymous referee, for reading the manuscript and making some helpful comments.


\end{document}